\documentclass[reqno]{amsart}
\usepackage{amsthm,amsmath,amssymb}
\usepackage{stmaryrd,mathrsfs}
\usepackage[T1]{fontenc}
\usepackage{esint}
\usepackage{tikz}

\allowdisplaybreaks


%
%

\newcommand{\ud}[0]{\,\mathrm{d}}

\newcommand{\abs}[1]{|#1|}

\newcommand{\Norm}[2]{\|#1\|_{#2}}

\newcommand{\BNorm}[2]{\Big\|#1\Big\|_{#2}}
\newcommand{\pair}[2]{\langle #1,#2 \rangle}

\newcommand{\ave}[1]{\langle #1\rangle}




\newcommand{\R}{\mathbb{R}}



\swapnumbers \numberwithin{equation}{section}

\newcommand{\cf}{\mathcal{F}}
\newcommand{\dsigma}{\mathrm{d}\sigma}
\newcommand{\br}{\mathbb{R}}
\newcommand{\cd}{\mathcal{D}}
\newcommand{\Nnorm}[1]{\|#1\|}
\newcommand{\ch}{\textup{ch}}

\theoremstyle{plain}
\newtheorem{theorem}[equation]{Theorem}
\newtheorem{proposition}[equation]{Proposition}
\newtheorem{corollary}[equation]{Corollary}
\newtheorem{lemma}[equation]{Lemma}

\theoremstyle{definition}
\newtheorem{definition}[equation]{Definition}

\theoremstyle{remark}

\newtheorem{example}[equation]{Example}

\makeatletter
\@namedef{subjclassname@2010}{%
  \textup{2010} Mathematics Subject Classification}
\makeatother

%
%

\begin{document}

\title[{Two-weight $L^p$-$L^q$ bounds for $p\leq q$ and $p>q$}]{Two-weight $L^p$-$L^q$ bounds for positive dyadic operators: unified approach to $p\leq q$ and $p>q$}

\author{Timo S. H\"anninen \and Tuomas P. Hyt\"onen}
\address{T.S.H. and T.P.H: Department of Mathematics and Statistics, P.O.B.~68 (Gustaf H\"all\-str\"omin katu~2b), FI-00014 University of Helsinki, Finland}
\email{timo.s.hanninen@helsinki.fi}
\email{tuomas.hytonen@helsinki.fi}


\author{Kangwei Li}
\address{K.L.: School of Mathematical Sciences and LPMC,  Nankai University,
      Tianjin~300071, China}
\email{likangwei9@mail.nankai.edu.cn}

%
%


\thanks{T.S.H. and T.P.H. are supported by the European Union through the ERC Starting Grant ``Analytic-probabilistic methods for borderline singular integrals'', and are members of the Finnish Centre of Excellence in Analysis and Dynamics Research. K.L. is partially supported  by the
National Natural Science Foundation of China (11371200), the Research Fund for the Doctoral Program
of Higher Education (20120031110023) and the Ph.D. Candidate Research Innovation Fund of Nankai University. This research was conducted during K.L.'s visit to the University of Helsinki.}

\keywords{}
\subjclass[2010]{42B25, 47G40}


\begin{abstract}
We characterize the $L^p(\sigma)\to L^q(\omega)$ boundedness of positive dyadic operators of the form
\begin{equation*}
  T(f\sigma)=\sum_{Q\in\mathscr{D}}\lambda_Q\int_Q f\ud\sigma\cdot 1_Q,
\end{equation*}
and the $L^{p_1}(\sigma_1)\times L^{p_2}(\sigma_2)\to L^q(\omega)$ boundedness of their bilinear analogues, for arbitrary locally finite measures $\sigma,\sigma_1,\sigma_2,\omega$. In the linear case, we unify the existing ``Sawyer testing'' (for $p\leq q$) and ``Wolff potential'' (for $p>q$) characterizations into a new ``sequential testing'' characterization valid in all cases. We extend these ideas to the bilinear case, obtaining both sequential testing and potential type characterizations for the bilinear operator and all $p_1,p_2,q\in(1,\infty)$. Our characterization covers the previously unknown case $q<p_1p_2/(p_1+p_2)$, where we introduce a new two-measure Wolff potential.
\end{abstract}

\maketitle
\tableofcontents

\section{Introduction}

%
%

We are interested in characterizations of two-weight $L^p(\sigma)$-$L^q(\omega)$ inequalities for discrete positive operators of the form
\begin{equation}\label{eq:TandTQ}
\begin{split}
  T(f\sigma) &:=\sum_{Q\in\mathcal{D}}\lambda_Q\int_Q f \ud\sigma\cdot 1_Q,\qquad\lambda_Q\geq 0,\\
  T_{Q}(f\sigma) &:=\sum_{\substack{Q'\in\mathcal{D}\\ Q'\subseteq Q}}\lambda_{Q'}\int_{Q'} f \ud\sigma\cdot 1_{Q'},
\end{split}
\end{equation}
as well as some bilinear generalizations. 
Such questions have been quite extensively studied in the last few years, but our goal is to offer a systematic approach, which unifies a number of existing results in the linear case, and more or less completes the picture in the bilinear case.


For the $L^p(\sigma)$-$L^q(\omega)$ boundedness of $T$ from \eqref{eq:TandTQ}, there are two seemingly quite different characterizations depending on the relative size of the exponents $p$ and $q$.

\begin{theorem}[Linear $p\leq q$; Lacey, Sawyer, and Uriarte-Tuero \cite{lacey2009}]\label{thm:LSU}
For $1<p\leq q<\infty$ and two measures $\sigma,\omega$, we have
\begin{equation*}
  \Norm{T(\,\cdot\,\sigma)}{L^p(\sigma)\to L^q(\omega)}
  \eqsim \mathfrak{T}+\mathfrak{T}^*,
\end{equation*}
where
\begin{equation}\label{eq:SawyerTesting}
  \mathfrak{T}  :=\sup_{Q\in\mathcal{D}}\frac{\Norm{T_{Q}(\sigma)}{L^q(\omega)}}{\sigma(Q)^{1/p}}, \qquad
  \mathfrak{T}^*  :=\sup_{Q\in\mathcal{D}}\frac{\Norm{T_{Q}(\omega)}{L^{p'}(\sigma)}}{\omega(Q)^{1/q'}}.
\end{equation}
\end{theorem}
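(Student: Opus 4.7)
For necessity, I would test on indicators: for any $Q \in \mathcal{D}$, the pointwise inequality $T_Q(\sigma) \le T(\mathbf{1}_Q \sigma)$ yields
\[
  \Norm{T_Q(\sigma)}{L^q(\omega)} \le \Norm{T(\mathbf{1}_Q\sigma)}{L^q(\omega)} \le \Norm{T(\,\cdot\,\sigma)}{L^p(\sigma)\to L^q(\omega)}\,\sigma(Q)^{1/p},
\]
so $\mathfrak{T} \le \Norm{T(\,\cdot\,\sigma)}{L^p(\sigma)\to L^q(\omega)}$. Since the formal adjoint of $T(\,\cdot\,\sigma):L^p(\sigma)\to L^q(\omega)$ is $T(\,\cdot\,\omega):L^{q'}(\omega)\to L^{p'}(\sigma)$, of the same form with $(p,\sigma)$ and $(q',\omega)$ interchanged, the same argument yields $\mathfrak{T}^* \le \Norm{T(\,\cdot\,\sigma)}{L^p(\sigma)\to L^q(\omega)}$.

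Sufficiency is the substantive direction. By duality it suffices to bound
\[
  \Lambda(f,g) := \sum_{Q\in\mathcal{D}} \lambda_Q \int_Q f\ud\sigma \int_Q g\ud\omega
\]
for $f,g\ge 0$ by $(\mathfrak{T}+\mathfrak{T}^*)\Norm{f}{L^p(\sigma)}\Norm{g}{L^{q'}(\omega)}$. My plan is a parallel corona decomposition: construct stopping-time collections $\mathcal{F}$ for $(f,\sigma)$ and $\mathcal{G}$ for $(g,\omega)$ by adding a maximal descendant to $\mathcal{F}$ (resp.\ $\mathcal{G}$) whenever the $\sigma$-average of $f$ (resp.\ $\omega$-average of $g$) doubles. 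Writing $\alpha_F:=\langle f\rangle_F^\sigma$, $\beta_G:=\langle g\rangle_G^\omega$, and principal sets $E_F = F\setminus\bigcup_{F'\in\ch_\mathcal{F}(F)}F'$ and $E_G$ analogously, one has the standard sparseness $\sigma(E_F)\ge\tfrac12\sigma(F)$ and $\omega(E_G)\ge\tfrac12\omega(G)$, and hence the discrete Carleson inequalities $\sum_F \alpha_F^p\sigma(F) \lesssim \Norm{f}{L^p(\sigma)}^p$ and $\sum_G \beta_G^{q'}\omega(G) \lesssim \Norm{g}{L^{q'}(\omega)}^{q'}$ by domination with the dyadic maximal function.

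Each $Q$ has minimal $\mathcal{F}$- and $\mathcal{G}$-ancestors $F(Q), G(Q)$, which are dyadically nested. I would split $\Lambda = \Lambda_1 + \Lambda_2$ according to whether $G(Q)\subseteq F(Q)$ or $F(Q)\subsetneq G(Q)$. For $\Lambda_1$, the stopping bound $\int_Q g\ud\omega \le 2\beta_{G(Q)}\omega(Q)$, combined with extending the inner sum to all $Q'\subseteq G$ and invoking the dual testing $\Norm{T_G(\omega)}{L^{p'}(\sigma)} \le \mathfrak{T}^*\,\omega(G)^{1/q'}$, gives
\[
  \Lambda_1 \le 2\mathfrak{T}^* \sum_{G\in\mathcal{G}} \beta_G\,\omega(G)^{1/q'}\,\Norm{f\mathbf{1}_G}{L^p(\sigma)};
\]
a symmetric argument yields $\Lambda_2 \le 2\mathfrak{T} \sum_{F\in\mathcal{F}} \alpha_F\,\sigma(F)^{1/p}\,\Norm{g\mathbf{1}_F}{L^{q'}(\omega)}$.

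The final step — bounding these two sums by $\Norm{f}{L^p(\sigma)}\Norm{g}{L^{q'}(\omega)}$ — is the main obstacle, and the only place where the hypothesis $p \le q$ is decisive. The plan is to dominate each function by its elementary form $f\le 2\sum_F \alpha_F \mathbf{1}_{E_F}$ and $g\le 2\sum_G\beta_G\mathbf{1}_{E_G}$, substitute into the $\Lambda_1$ and $\Lambda_2$ sums to obtain bilinear expressions in the Carleson sequences $(\alpha_F)$ and $(\beta_G)$, and apply Hölder's inequality together with Jensen's inequality for the convex function $y\mapsto y^{p'/q'}$ (which requires $p'\ge q'$, equivalently $p\le q$) to absorb the tree multiplicities against the two Carleson norms. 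Without $p\le q$ this convexity step breaks down irrecoverably; this is precisely the obstruction that forces the fundamentally different, potential-theoretic Wolff characterization in the $p>q$ regime and motivates the unified ``sequential testing'' framework developed in the present paper.
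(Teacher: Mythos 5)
Your necessity argument is exactly the paper's (test on indicators, pass to the formal adjoint for $\mathfrak{T}^*$), and your parallel-corona setup for sufficiency is the right general approach. The genuine gap is at the point you yourself call ``the final step''. Your intermediate bounds keep the \emph{full} restrictions $f1_G$ and $g1_F$, and the quantity $\sum_{G\in\mathcal{G}}\beta_G\,\omega(G)^{1/q'}\Norm{f1_G}{L^p(\sigma)}$ simply is not bounded by $\Norm{f}{L^p(\sigma)}\Norm{g}{L^{q'}(\omega)}$: the stopping cubes are nested with unbounded overlap, so $\sum_G\Norm{f1_G}{L^p(\sigma)}^p\not\lesssim\Norm{f}{L^p(\sigma)}^p$. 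Concretely, with $p=q=2$ and $\sigma=\omega$ Lebesgue on $[0,1)$, build $g$ so that $\langle g\rangle_{[0,4^{-m})}=2^m$ for $m\le M$; then $\mathcal{G}$ is (essentially) a chain of about $M$ cubes $G$ with $\beta_G\,\omega(G)^{1/2}\eqsim 1$ and $\Norm{g}{L^2}\eqsim\sqrt{M}$, and taking $f$ a spike near the origin gives $\Norm{f1_G}{L^2}=\Norm{f}{L^2}$ for every $G$, so the sum is $\eqsim M\Norm{f}{L^2}\gg\sqrt{M}\,\Norm{f}{L^2}\eqsim\Norm{f}{L^2}\Norm{g}{L^2}$. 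Hence no H\"older/Jensen manipulation can close the argument from your $\Lambda_1,\Lambda_2$ bounds. Moreover, your proposed remedy rests on a false inequality: the stopping construction gives $\langle f\rangle^\sigma_Q\le 2\alpha_{F(Q)}$ for cube averages, but $f\le 2\sum_F\alpha_F 1_{E_F}$ is not true pointwise (the step function $\sum_F\alpha_F1_{E_F}$ is controlled by $f$ in norm via the maximal function, not the other way around), and the convexity of $y\mapsto y^{p'/q'}$ is not where $p\le q$ actually enters.

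What is missing is precisely the quasi-orthogonality that the paper packages in Lemma~\ref{lem:linearCore} (borrowed from \cite{hytonen2012a}): in the regime $G(Q)\subseteq F(Q)$ one must pair the dual testing on $G$ not with $f1_G$ but with a corona-localized piece $f_G$ (and symmetrically $g_F$ in the other regime), chosen so that the pieces are essentially disjoint and satisfy $\sum_G\Norm{f_G}{L^p(\sigma)}^p\lesssim\Norm{f}{L^p(\sigma)}^p$ and $\sum_F\Norm{g_F}{L^{q'}(\omega)}^{q'}\lesssim\Norm{g}{L^{q'}(\omega)}^{q'}$, as in \eqref{eq:Tdecompo}. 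Once the decomposition carries these localized pieces, the only use of $p\le q$ is the elementary step $1/p+1/q'\ge 1$ (equivalently $\ell^p\hookrightarrow\ell^q$) together with the Carleson embedding, exactly as in the paper's proof of Theorem~\ref{thm:seqTesting} specialized to $r=\infty$. So your outline needs this localization built into the corona decomposition before the final H\"older step; as written, the argument cannot be completed.
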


\begin{theorem}[Linear $p> q$; Tanaka \cite{tanaka2014}]\label{thm:Tanaka}
For $1<q< p<\infty$ and two measures $\sigma,\omega$, we have
\begin{equation*}
  \Norm{T(\,\cdot\,\sigma)}{L^p(\sigma)\to L^q(\omega)}
  \eqsim \Norm{W^q_{\lambda,\omega}[\sigma]}{L^{r/q}(\sigma)}^{1/q}+\Norm{W^{p'}_{\lambda,\sigma}[\omega]}{L^{r/p'}(\omega)}^{1/p'}
\end{equation*}
where $1/r=1/q-1/p$ and
\begin{equation*}
  W^q_{\lambda,\omega}[\sigma]
  :=\sum_{Q\in\mathcal{D}}\lambda_Q \omega(Q)\Big(\frac{1}{\omega(Q)}\sum_{Q'\subseteq Q}\lambda_{Q'}\sigma(Q')\omega(Q')\Big)^{q-1}1_Q
\end{equation*}
is the \emph{discrete Wolff potential}.
\end{theorem}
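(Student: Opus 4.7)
My plan is to prove the two directions of the equivalence separately, exploiting the self-adjointness of $T$. Expanding the bilinear pairing,
\[
\langle T(f\sigma), g\omega\rangle = \sum_{Q\in\mathcal{D}} \lambda_Q \Big(\int_Q f \ud\sigma\Big) \Big(\int_Q g \ud\omega\Big),
\]
shows that $T$ is symmetric under $(\sigma, f, p) \leftrightarrow (\omega, g, q')$. Thus $\|T(\,\cdot\,\sigma)\|_{L^p(\sigma)\to L^q(\omega)} = \|T(\,\cdot\,\omega)\|_{L^{q'}(\omega)\to L^{p'}(\sigma)}$, and this swap exchanges $W^q_{\lambda,\omega}[\sigma]$ with $W^{p'}_{\lambda,\sigma}[\omega]$ along with their $L^{r/q}(\sigma)$ and $L^{r/p'}(\omega)$ norms. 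Hence the necessity of the two Wolff bounds reduces to the necessity of either one, and for sufficiency the two pieces of the decomposition described below match the two Wolff potentials; I focus throughout on $W^q_{\lambda,\omega}[\sigma]$.

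For necessity, the key tool is a dyadic telescoping identity: for $q>1$ and $a_Q \geq 0$,
\[
\Big(\sum_{Q\ni x} a_Q\Big)^q \eqsim \sum_{Q\ni x} a_Q \Big(\sum_{Q'\subseteq Q,\, Q'\ni x} a_{Q'}\Big)^{q-1},
\]
which follows from writing $t^q = \int_0^t q s^{q-1}\ud s$ and partitioning the interval by the partial sums along the dyadic chain containing $x$. Applying this pointwise with $a_Q = \lambda_Q \int_Q f\,\ud\sigma$, integrating against $\omega$, and using Cavalieri or Jensen manipulations on the inner $(q-1)$-power to replace the pointwise dyadic sum by its $\omega$-average extracts the Wolff-potential structure as a lower bound for $\|T(f\sigma)\|_{L^q(\omega)}^q$. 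A suitable choice of $f$ (morally $f \sim W^q_{\lambda,\omega}[\sigma]^{r/q-1}\cdot 1_E$ on sets of finite $\sigma$-measure) combined with the assumed $L^p$--$L^q$ bound on $T$ then yields the desired $L^{r/q}(\sigma)$ estimate for $W^q_{\lambda,\omega}[\sigma]$.

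For sufficiency, the workhorse is a principal-cubes / stopping-time construction. For nonnegative $f \in L^p(\sigma)$ and $g \in L^{q'}(\omega)$, build families $\mathcal{F}$ of principal cubes of $f$ w.r.t.\ $\sigma$, and $\mathcal{G}$ of principal cubes of $g$ w.r.t.\ $\omega$, along which the respective averages roughly double. Writing $\pi_\mathcal{F}(Q)$ for the minimal $F \in \mathcal{F}$ containing $Q$, one has $\sigma(Q)^{-1}\int_Q f\ud\sigma \lesssim \sigma(\pi_\mathcal{F}(Q))^{-1}\int_{\pi_\mathcal{F}(Q)} f\ud\sigma$ (and similarly for $g,\omega,\mathcal{G}$). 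Substituting into the bilinear form and splitting the cube sum according to whether $\pi_{\mathcal{G}}(Q) \subseteq \pi_{\mathcal{F}}(Q)$ or $\pi_{\mathcal{F}}(Q) \subsetneq \pi_{\mathcal{G}}(Q)$ yields two contributions, each to be controlled by one of the two Wolff potentials. Numerically, one applies H\"older's inequality with exponents $p, q', r$ (whose reciprocals sum to $1$ precisely by the relation $1/r = 1/q - 1/p$), together with the Carleson packing $\sum_{F \in \mathcal{F},\, F \subseteq F_0} \sigma(F) \lesssim \sigma(F_0)$ that lets $\ell^p(\mathcal{F})$-sums of averages be absorbed into $\|f\|_{L^p(\sigma)}$.

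The main obstacle is the Carleson-type embedding that identifies the sum produced by the principal-cubes split with the Wolff potential. Concretely, after H\"older one is led to bound quantities of the form $\sum_{F \in \mathcal{F}} \sigma(F)\, Y_F^{r/q}$, where $Y_F$ is a localized version of the Wolff-potential expression indexed by $\{Q : \pi_{\mathcal{F}}(Q)=F\}$. Showing that this is dominated by $\int (W^q_{\lambda,\omega}[\sigma])^{r/q}\ud\sigma$ requires passing from the localized ($Q'\subseteq Q$ with $\pi_{\mathcal{F}}(Q)=F$) to the global ($Q'\subseteq Q$ unrestricted) Wolff-potential sum at only a constant cost. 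This in turn relies on the doubling of averages along principal cubes together with the sparseness of $\mathcal{F}$, and the bookkeeping here is the technical heart of the $p>q$ analysis.
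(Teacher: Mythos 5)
Your overall strategy (a pointwise telescoping identity to expose the Wolff structure for necessity, principal cubes plus H\"older with $1/p+1/q'+1/r=1$ and Carleson embedding for sufficiency) is the classical route of Tanaka and Cascante--Ortega--Verbitsky, and it differs from how the statement is obtained here (in this paper it is quoted, and within the paper's framework it follows from Theorem~\ref{thm:seqTesting} combined with Propositions~\ref{prop:abstractWolff} and~\ref{prop:absVsConcrete}). The duality/symmetry reduction to one Wolff term is fine. However, both halves of your argument stop exactly where the real work begins. In the necessity half the test function is asserted rather than used, and the indicated choice does not even scale correctly: writing $W:=W^q_{\lambda,\omega}[\sigma]$, for $f\sim W^{r/q-1}$ one has $(r/q-1)p=r$, so $\|f\|_{L^p(\sigma)}^q=\big(\int W^{r}\ud\sigma\big)^{q/p}$, which cannot be absorbed against the target $\int W^{r/q}\ud\sigma$; the consistent choice is $f^p\sim W^{r/q}$, i.e.\ $f\sim W^{r/(pq)}$. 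Even with the correct power, the crucial comparison between $\sum_{Q'\subseteq Q}\lambda_{Q'}\int_{Q'}f\ud\sigma\,\omega(Q')$ (coming from the telescoped lower bound for $\|T(f\sigma)\|_{L^q(\omega)}^q$) and the factor $\sum_{Q'\subseteq Q}\lambda_{Q'}\sigma(Q')\omega(Q')$ appearing in $W$ is precisely the nontrivial point, and no argument is offered for it. A clean way to close this is by duality: for $g\geq0$ with $\|g\|_{L^{p/q}(\sigma)}\leq1$ (note $(r/q)'=p/q$), bound $\int Wg\ud\sigma$ by inserting $\langle g\rangle_Q^\sigma\leq\inf_Q M_{\mathcal D}^\sigma g$, reassembling $\|T((M_{\mathcal D}^\sigma g)^{1/q}\sigma)\|_{L^q(\omega)}^q$ via Lemma~\ref{lem:dyadicSum}, and then using the boundedness of $T$ and of $M_{\mathcal D}^\sigma$ on $L^{p/q}(\sigma)$; this is the linear analogue of the argument given in the proof of Theorem~\ref{thm:discreteWolff}.

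In the sufficiency half you explicitly leave the decisive step open (your ``main obstacle''): the domination of $\sum_{F\in\mathcal F}\sigma(F)Y_F^{r/q}$ by $\int W^{r/q}\ud\sigma$, where $Y_F$ is the localized Wolff-type quantity produced by the principal-cube split. This is a genuine gap, not bookkeeping, and ``doubling of averages along principal cubes together with sparseness'' is a heuristic, not a proof. It can be closed, but by a different mechanism than the one you hint at: by the identity \eqref{eq:WolffObs} (a consequence of Lemma~\ref{lem:dyadicSum}) one has $Y_F\lesssim\sigma(F)^{-1}\|T_F(\sigma)\|_{L^q(\omega)}^q\eqsim\sigma(F)^{-1}\int_F W_F\ud\sigma\leq\inf_F M_{\mathcal D}^\sigma W$, whence $\sigma$-sparseness of $\mathcal F$ gives $\sum_F\sigma(F)Y_F^{r/q}\lesssim\int(M_{\mathcal D}^\sigma W)^{r/q}\ud\sigma\lesssim\int W^{r/q}\ud\sigma$, using $r/q>1$; this is exactly the content packaged in Lemma~\ref{lem:dyadicSup} and Propositions~\ref{prop:abstractWolff}--\ref{prop:absVsConcrete}. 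Until these two steps are actually carried out, your proposal is a reasonable roadmap rather than a proof.
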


The case $p=q=2$ of Theorem~\ref{thm:LSU} was already proven by Nazarov, Treil and Volberg \cite{NTV:1999}. The case $p>q$ (as in Theorem~\ref{thm:Tanaka}) was previously characterized by Cascante, Ortega and Verbitsky~\cite{cov2006}, even for $0<q<\infty$, but only under additional \emph{a priori} conditions (so called ``dyadic logarithmic bounded oscillation'') on the coefficients $\lambda_Q$; but our emphasis is on results valid without any such assumptions.

While both Theorem~\ref{thm:LSU} and Theorem~\ref{thm:Tanaka} characterize a two-weight inequality for an operator in terms of something arguably simpler, the characterizations are very different. The Sawyer-type testing conditions of Theorem~\ref{thm:LSU} are obviously necessary: they amount to the trivial bound $T_Q(\sigma)\leq T(1_Q\sigma)$ and the boundedness of $T(\,\cdot\,\sigma)$ and its formal adjoint $T(\,\cdot\,\omega)$ on the indicator functions, where the latter does not make any use of the properties of the operator other than boundedness and linearity (for the existence of an adjoint).

In a sense, the characterization of Theorem~\ref{thm:Tanaka} is even better, since it reduces the norm of an operator to the norm of a function, which in principle is a much simpler object. On the other hand, the relation of this function to the original operator is not at all obvious at first sight, and both the necessity and the sufficiency parts of Theorem~\ref{thm:Tanaka} are non-trivial results, which use the specific structure of the operator in question. Indeed, the very formula of $W^q_{\lambda,\omega}[\sigma]$ makes explicit reference to this structure, and does not suggest an immediate generalization to more general operators. Also,  there is a discontinuity between the characterizations of Theorems \ref{thm:LSU} and \ref{thm:Tanaka}, in that the Wolff potential conditions do not seem to converge to the Sawyer testing conditions in any obvious sense as $q\to p$.

Our first contribution is to provide a characterization that fixes this discontinuity and covers all values of $p,q\in(1,\infty)$ in a unified manner. This is achieved by a new ``sequential'' testing conditions, as defined below: (An alternative form of a unified characterization has been simultaneously obtained by Vuorinen \cite{Vuorinen}.)

\begin{theorem}\label{thm:seqTesting}
For $p,q\in(1,\infty)$ and two measures $\sigma,\omega$, we have
\begin{equation*}
  \Norm{T(\,\cdot\,\sigma)}{L^p(\sigma)\to L^q(\omega)}
  \eqsim \mathfrak{T}_r+\mathfrak{T}_r^*,
\end{equation*}
where $r\in(1,\infty]$ is determined by
\begin{equation}\label{eq:rDef}
  \frac{1}{r}=\Big(\frac{1}{q}-\frac{1}{p}\Big)_+
  :=\max\Big\{\frac{1}{q}-\frac{1}{p},0\Big\}.
\end{equation}
and
\begin{equation}\label{eq:newTesting}
\begin{split}
  \mathfrak{T}_r & :=\sup_{\cf}\BNorm{\Big\{\frac{\Norm{T_{F}(\sigma)}{L^q(\omega)}}{\sigma(F)^{1/p}}\Big\}_{F\in\cf}}{\ell^r}, \\
  \mathfrak{T}_r^* & :=\sup_{\mathcal{G}}\BNorm{\Big\{\frac{\Norm{T_{G}(\omega)}{L^{p'}(\sigma)}}{\omega(G)^{1/q'}}\Big\}_{G\in\mathcal{G}}}{\ell^r},
\end{split}
\end{equation}
where the supremums are taken over all subcollections $\cf$ and $\mathcal{G}$ of $\mathcal{D}$ that are sparse (in the sense of Definition~\ref{def:sparse} below) with respect to $\sigma$ and $\omega$, respectively.
\end{theorem}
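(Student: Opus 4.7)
The argument splits according to the relation between $p$ and $q$.

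\emph{Case $p\leq q$.} Here $r=\infty$, so the $\ell^\infty$-norm is a supremum. Any singleton $\{F\}$ is trivially $\sigma$-sparse, so taking the supremum over singletons already gives $\mathfrak{T}_\infty\geq\mathfrak{T}$; the reverse is immediate from $\|(a_{F'})_{F'\in\mathcal{F}}\|_{\ell^\infty}\leq\sup_{F''\in\mathcal{D}}a_{F''}$. Thus $\mathfrak{T}_\infty=\mathfrak{T}$ and $\mathfrak{T}_\infty^*=\mathfrak{T}^*$, and the conclusion is exactly Theorem~\ref{thm:LSU}.

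\emph{Case $p>q$.} Here $r=pq/(p-q)\in(p',\infty)$, the substantive case. For the sufficiency $N\lesssim\mathfrak{T}_r+\mathfrak{T}_r^*$, fix nonnegative $f\in L^p(\sigma)$ and $g\in L^{q'}(\omega)$, and construct principal stopping families $\mathcal{F}$ for $f$ with respect to $\sigma$ and $\mathcal{G}$ for $g$ with respect to $\omega$, so that $\mathcal{F}$ is $\sigma$-sparse and $\mathcal{G}$ is $\omega$-sparse. Split the bilinear pairing
\[
\langle T(f\sigma),g\omega\rangle=\sum_{Q\in\mathcal{D}}\lambda_Q\langle f\rangle_Q^\sigma\langle g\rangle_Q^\omega\sigma(Q)\omega(Q)=I+II
\]
according to whether $\pi_\mathcal{F}(Q)\subseteq\pi_\mathcal{G}(Q)$ (giving $I$) or $\pi_\mathcal{G}(Q)\subsetneq\pi_\mathcal{F}(Q)$ (giving $II$). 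The two halves are symmetric and will be controlled by $\mathfrak{T}_r$ and $\mathfrak{T}_r^*$ respectively. For $I$, the observation that $\pi_\mathcal{F}(Q)=F$ and $\pi_\mathcal{G}(Q)\supseteq F$ force $\pi_\mathcal{G}(Q)=\pi_\mathcal{G}(F)$, combined with stopping-time domination of averages and the H\"older bound $\int T_F(\sigma)\,d\omega\leq\|T_F(\sigma)\|_{L^q(\omega)}\omega(F)^{1/q'}$, yields
\[
I\lesssim\sum_{F\in\mathcal{F}}\langle f\rangle_F^\sigma\langle g\rangle_{\pi_\mathcal{G}(F)}^\omega\|T_F(\sigma)\|_{L^q(\omega)}\omega(F)^{1/q'}.
\]

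Apply three-way H\"older with exponents $(p,q',r)$; these balance because $\tfrac{1}{p}+\tfrac{1}{q'}+\tfrac{1}{r}=1$ follows from $\tfrac{1}{r}=\tfrac{1}{q}-\tfrac{1}{p}$. The $L^p$-factor is dominated by $\|f\|_{L^p(\sigma)}$ via the Carleson embedding, valid for any $\sigma$-sparse family, and the $\ell^r$-factor is by definition bounded by $\mathfrak{T}_r$. The chief obstacle is the $L^{q'}$-factor $\sum_F(\langle g\rangle_{\pi_\mathcal{G}(F)}^\omega)^{q'}\omega(F)$: grouped by $G=\pi_\mathcal{G}(F)$, its control by $\|g\|_{L^{q'}(\omega)}^{q'}$ reduces, through the $\omega$-Carleson embedding for $\mathcal{G}$, to $\sum_{F\in\mathcal{F},\,F\subseteq G}\omega(F)\lesssim\omega(G)$, which in general fails for a purely $\sigma$-sparse family. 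We handle this by enriching $\mathcal{F}$ with additional stopping cubes so that it is sparse in both $\sigma$ and $\omega$, using a parallel stopping construction familiar from two-weight theory; the added cubes do not spoil the $\sigma$-Carleson embedding, which holds for any $\sigma$-sparse family. For necessity, $\ell^r$-duality reduces $\mathfrak{T}_r\lesssim N$ to bounding $\sum_F\beta_F\|T_F(\sigma)\|_{L^q(\omega)}\sigma(F)^{-1/p}$ uniformly in $(\beta_F)$ with $\|(\beta_F)\|_{\ell^{r'}}\leq 1$; choosing near-optimal nonnegative $g_F$ supported in $F$ with $\|g_F\|_{L^{q'}(\omega)}\leq 1$ and $\int T_F(\sigma)g_F\,d\omega\gtrsim\|T_F(\sigma)\|_{L^q(\omega)}$, and assembling them into a test function via the disjoint sparse sets $E_F^\sigma$, realises this bound through the operator norm $N$.
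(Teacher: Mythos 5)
Your reduction of the case $p\leq q$ to Theorem~\ref{thm:LSU} is fine, and your overall plan for $p>q$ (stopping families for $f$ and $g$, splitting by which stopping parent is smaller, three-way H\"older with exponents $p,r,q'$) is the right skeleton. But the step you yourself identify as "the chief obstacle" is handled incorrectly, and this is a genuine gap. After your bound
\begin{equation*}
I\lesssim\sum_{F\in\mathcal{F}}\langle f\rangle_F^\sigma\,\langle g\rangle_{\pi_{\mathcal{G}}(F)}^\omega\,\Norm{T_F(\sigma)}{L^q(\omega)}\,\omega(F)^{1/q'},
\end{equation*}
the third H\"older factor requires $\sum_{F\in\mathcal F,\,\pi_{\mathcal G}(F)=G}\omega(F)\lesssim\omega(G)$, i.e.\ an $\omega$-Carleson packing property of $\mathcal F$ relative to $\mathcal G$. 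Your proposed fix --- ``enriching $\mathcal F$ with additional stopping cubes so that it is sparse in both $\sigma$ and $\omega$'' --- cannot work: adding cubes only \emph{increases} $\sum_{F\subseteq G}\omega(F)$ (and threatens $\sigma$-sparseness, which you must keep, since $\mathfrak T_r$ is a supremum over $\sigma$-sparse families only), while removing cubes destroys the average-domination $\langle f\rangle_Q^\sigma\lesssim\langle f\rangle_{\pi_{\mathcal F}(Q)}^\sigma$ that you used to get the displayed bound. More to the point, no family with the average-domination property can have the packing property in general: take $\omega$ a point mass at $x_0$ and $f\in L^p(\sigma)$ whose $\sigma$-averages grow geometrically along an infinite chain of dyadic cubes shrinking to $x_0$; then any admissible $\mathcal F$ contains infinitely many cubes of the chain, each of full $\omega$-measure, so $\sum_{F\subseteq G}\omega(F)=\infty$ while $\omega(G)<\infty$. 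The correct device --- which is exactly what the paper imports as Lemma~\ref{lem:linearCore} from \cite{hytonen2012a} --- is to never form the quantity $\langle g\rangle^\omega_{\pi_{\mathcal G}(F)}\omega(F)^{1/q'}$, but instead to replace it by $\Norm{g_F}{L^{q'}(\omega)}$ for quasi-orthogonal localized pieces $g_F$ of $g$ (built from $E_{\mathcal G}(\pi_{\mathcal G}F)$ and the $\mathcal G$-children) satisfying $\sum_F\Norm{g_F}{L^{q'}(\omega)}^{q'}\lesssim\Norm{g}{L^{q'}(\omega)}^{q'}$; with that in hand, your three-way H\"older and the $\sigma$-Carleson embedding finish the sufficiency exactly as you intend.

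Your necessity sketch is also not complete as written. Dualizing at the $\ell^r$ level leaves you with $\sum_F\beta_F\sigma(F)^{-1/p}\int T(1_F\sigma)\,g_F\ud\omega$ for \emph{different} dual functions $g_F$ on the $\omega$-side, where no disjointness is available; the disjoint sets $E^\sigma_F$ live on the $\sigma$-side and do not let you assemble the $g_F$ into a single admissible $g$, and estimating each term separately only yields the useless bound $N\sum_F\beta_F$. The paper avoids this by dualizing at the level of $q$-th powers (against $\ell^{(r/q)'}=\ell^{p/q}$), using positivity in the form $\sum_F[T(\beta_F\phi_F\sigma)]^q\leq[T(\sum_F\beta_F\phi_F\sigma)]^q$, so that the only assembling needed is of the $\sigma$-side test function $\sum_F\beta_F 1_F/\sigma(F)^{1/p}$, which Lemma~\ref{lem:pythagoras} (sparse Pythagoras) controls by $(\sum_F\beta_F^p)^{1/p}\leq1$. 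You should either adopt this power-$q$ duality or supply a genuine quasi-orthogonalization of the $g_F$; as stated, both halves of your argument rely on constructions that do not exist.
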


\begin{definition}\label{def:sparse}
A family $\cf\subseteq\mathcal{D}$ is called $\sigma$-sparse (or sparse with respect to $\sigma$), if for every $F\in\cf$, there exists some $E(F)\subseteq F$ such that $\sigma(E(F))\geq\frac12\sigma(F)$, and the sets $E(F)$, $F\in\cf$, are pairwise disjoint.
\end{definition}

We note that a version of the sequential testing has been earlier used by Sawyer \cite[Theorem 3]{sawyer1984} to give the following characterization of the two-weight inequality for the Hardy operator $H(f\sigma)(x):=\int_0^x f(t)\dsigma(t)$:
\begin{equation*}
  \Norm{H(\,\cdot\,\sigma)}{L^p(\sigma)\to L^q(\omega)}
  \eqsim \sup_{\cdots<x_k<x_{k+1}\cdots}\lVert\{\sigma((x_{k-1},x_k])^{1/p'}\omega([x_k,x_{k+1}))^{1/q}\}_{x_k}\rVert_{\ell^r}.
\end{equation*}
However, it seems that this idea has not been previously pushed to more complicated positive integral operators, of which the Hardy operator is the simplest prototype case. (Sawyer's theorem is also valid for $(p,q)\in[1,\infty)\times(0,\infty)$, a larger range than in our result.)

When $p\leq q$, we have $r=\infty$ in \eqref{eq:rDef}, and the characterization \eqref{eq:newTesting} reduces to the Sawyer conditions \eqref{eq:SawyerTesting}.
For $p>q$, the necessity of the finiteness of $\mathfrak{T}_r,\mathfrak{T}^*_r$ from \eqref{eq:newTesting} is perhaps still not obvious. Nevertheless, we shall show that it is an essentially more general property, in that replacing each $T_F(\sigma)$ by the larger function $T(1_F\sigma)$ in $\mathfrak{T}_r$, we obtain a condition that is necessary for the boundedness of any positive linear $T(\,\cdot\,\sigma):L^p(\sigma)\to L^q(\omega)$, not involving any specific structure of the operator.

(Note that we have tagged the label `$\sigma$' with `$T(\,\cdot\,\sigma)$'.  In the case of a general operator, this tag just emphasizes that the operator $T(\,\cdot\,\sigma)$ acts on $L^p(\sigma)$. In the case of an integral operator associated with a kernel $K$, this tag indicates the measure with respect to which the integration is done: $T(f\sigma)(x):=\int K(x,y)f(y)\dsigma(y)$.)

We also demonstrate the necessity of the sequential testing conditions in another sense; namely, replacing them by the Sawyer testing conditions (i.e., replacing $\ell^r$ by the weaker norm $\ell^\infty$) is not in general sufficient for the boundedness when $p>q$.

By going through both Theorems \ref{thm:Tanaka} and \ref{thm:seqTesting}, one sees that the sequential testing and Wolff potential conditions are equivalent. However, this can also be seen more directly, and on a more general level: it turns out that the sequential testing is equivalent to a norm bound for an abstract ``Wolff potential'' for a general positive linear~$T(\,\cdot\,\sigma)$:

\begin{proposition}\label{prop:abstractWolff}
Let $1<q<p<\infty$ and $r\in(1,\infty)$ be given by $1/r=1/q-1/p$. Let $T(\,\cdot\,\sigma):L^p(\sigma)\to L^q(\omega)$ be a positive linear operator, and let $T_Q(\,\cdot\,\sigma)$ denote any of the following localized operators:
\begin{enumerate}
  \item\label{it:globTest} $T_Q(\,\cdot\,\sigma)=T(\cdot1_Q\sigma)$, or
  \item\label{it:locTest} $T_Q(\,\cdot\,\sigma)=1_Q T(\cdot1_Q\sigma)$, or
  \item\label{it:specTest} $T_Q(\,\cdot\,\sigma)$ be as in \eqref{eq:TandTQ} if $T$ of the form given in the same formula,
\end{enumerate}
and let $\mathfrak{T}_r$ be as in \eqref{eq:newTesting}.
Then
\begin{equation*}
  \mathfrak{T}_r\eqsim \Norm{W_{T,\sigma}^q[\omega]}{L^{r/q}(\sigma)}^{1/q},\qquad
  W_{T,\sigma}^q[\omega]:=\sup_{Q\in\mathcal{D}}   \frac{1_Q}{\sigma(Q)}  \Norm{T_Q(\sigma)}{L^q(\omega)}^q.
\end{equation*}
\end{proposition}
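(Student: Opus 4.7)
The plan is to recast both quantities symmetrically. Setting $a_Q := \Norm{T_Q(\sigma)}{L^q(\omega)}^q$ and $b_Q := a_Q/\sigma(Q)$, one has $W_{T,\sigma}^q[\omega](x) = M(x) := \sup_{Q \ni x} b_Q$; using $1/p = 1/q - 1/r$, an elementary rearrangement gives $\mathfrak{T}_r^r = \sup_{\cf} \sum_{F \in \cf} b_F^{r/q}\sigma(F)$ with the supremum over $\sigma$-sparse families. The goal is thus to compare this sparse supremum with $\int M^{r/q}\dsigma$. For the direction $\mathfrak{T}_r^r \lesssim \int M^{r/q}\dsigma$ one simply uses $\sigma(F) \leq 2\sigma(E(F))$, exchanges sum and integral, and invokes the pairwise disjointness of $\{E(F)\}_{F \in \cf}$: at each $x$ at most one summand contributes, with $b_F \leq M(x)$.

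The reverse direction rests on a preliminary \emph{subadditivity} statement: $\sum_i a_{F'_i} \leq a_F$ whenever $F'_i \subsetneq F$ are pairwise disjoint. I would verify this separately for each of the three options. Options (\ref{it:specTest}) and (\ref{it:locTest}) are immediate, since there $T_{F'_i}(\sigma)$ lives on the disjoint $F'_i$ and is dominated by $T_F(\sigma)$, so the $q$-th power passes inside the sum. The main technical point, and the principal obstacle I anticipate, is option (\ref{it:globTest}), where $T(1_{F'_i}\sigma)$ need not have disjoint supports; here the idea is to combine the pointwise bound $\sum_i T(1_{F'_i}\sigma) \leq T(1_F\sigma)$ with the elementary inequality $\sum_i x_i^q \leq \big(\sum_i x_i\big)^q$ (valid precisely because $q \geq 1$) to recover the subadditivity.

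Granting subadditivity, I would then perform the standard principal-cube construction (assuming $\mathcal{D}$ finite by truncation and passing to the limit). Let $\mathcal{G}_0$ consist of the maximal cubes of $\mathcal{D}$, and for $F \in \mathcal{G}_n$ let $\mathcal{G}_{n+1}(F)$ be the maximal $F' \subsetneq F$ with $b_{F'} > 2 b_F$; set $\cf := \bigcup_n \mathcal{G}_n$. Subadditivity combined with $b_{F'} > 2 b_F$ immediately yields $\sum_{F' \in \mathcal{G}_{n+1}(F)} \sigma(F') \leq a_F/(2 b_F) = \sigma(F)/2$, so $E(F) := F \setminus \bigcup \mathcal{G}_{n+1}(F)$ witnesses $\sigma$-sparsity of $\cf$. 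Moreover, for each $x$ any dyadic $Q \ni x$ either belongs to the chain of $\cf$-cubes containing $x$ or lies between two consecutive chain members, so maximality in the principal construction forces $b_Q \leq 2 b_{F^{(i)}(x)}$ for some $i$; since $b$-values double along the chain, $M(x) \leq 2 b_{F^{(N(x))}(x)}$, where $F^{(N(x))}(x)$ is the smallest $\cf$-cube containing $x$. As $x \in E(F^{(N(x))}(x))$, integrating against $\sigma$ gives $\int M^{r/q}\dsigma \leq 2^{r/q}\sum_{F \in \cf} b_F^{r/q}\sigma(F) \leq 2^{r/q}\mathfrak{T}_r^r$, which is the required bound.
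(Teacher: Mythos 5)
Your proof is correct and follows essentially the same route as the paper: the paper abstracts your stopping-time argument into Lemma~\ref{lem:dyadicSup} (for general superadditive coefficients $\tau_Q$) and then verifies the superadditivity of $\tau_Q=\Norm{T_Q(\sigma)}{L^q(\omega)}^q$ exactly as you do, including the $\sum_i x_i^q\le(\sum_i x_i)^q$ trick for the localization $T(\cdot 1_Q\sigma)$; your principal-cube construction with threshold $2b_F$, the sparsity verification via superadditivity, and the easy direction via the disjoint sets $E(F)$ coincide with the paper's proof of that lemma. The only cosmetic difference is that you treat localizations \eqref{it:locTest} and \eqref{it:specTest} directly by disjoint supports, whereas the paper reduces them to case \eqref{it:globTest}.
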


\begin{example}
Let $\omega=\sigma$, and consider the pointwise multiplication operator $T:f\mapsto mf$. Using either localization \eqref{it:globTest} or \eqref{it:locTest} from Proposition~\ref{prop:abstractWolff}, we have $T_Q(\sigma)=1_Q m$ and
\begin{equation*}
  W_{T,\sigma}^q[\sigma]=\sup_{Q\in\mathcal{D}}\frac{1_Q}{\sigma(Q)}\int_Q m^q\ud\sigma=M_\sigma (m^q),
\end{equation*}
where $M_\sigma$ is the dyadic maximal operator relative to the $\sigma$ measure. Thus
\begin{equation*}
  \Norm{W_{T,\sigma}^q[\omega]}{L^{r/q}(\sigma)}^{1/q}
  =\Norm{M_\sigma(m^q)}{L^{r/q}(\sigma)}^{1/q}\eqsim\Norm{m}{L^r(\sigma)}=\Norm{T}{L^p(\sigma)\to L^q(\sigma)}
\end{equation*}
by the boundedness of the maximal operator for $r/q>1$, and H\"older's inequality in the last step. The theory of Wolff potentials is certainly overshooting to describe the boundedness of these simple multiplication operators, but it is interesting to note that such a degenerate case is also naturally covered by our general theory.
\end{example}

Even if $T(\,\cdot\,\sigma)$ is given by \eqref{eq:TandTQ}, its abstract Wolff potential $W_{T,\sigma}^q[\omega]$ may not be pointwise equal or comparable to the discrete Wolff potential $W_{\lambda,\omega}^q[\sigma]$ defined before; however, their relevant norms are equivalent:

\begin{proposition}\label{prop:absVsConcrete}
If $T(\,\cdot\,\sigma)$ is as in \eqref{eq:TandTQ}, then
\begin{equation*}
  \Norm{W_{T,\sigma}^q[\omega]}{L^{r/q}(\sigma)}\eqsim\Norm{W_{\lambda,\omega}^q[\sigma]}{L^{r/q}(\sigma)}.
\end{equation*}
\end{proposition}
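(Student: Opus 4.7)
The plan is to express both $L^{r/q}(\sigma)$-norms as comparable dyadic Carleson sums and then compare them.

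First, a standard dyadic Carleson identity for the $L^q(\mu)$-norm of a positive dyadic sum, applied to $T_Q(\sigma)=\sum_{Q'\subseteq Q}\lambda_{Q'}\sigma(Q')1_{Q'}$ with $\mu=\omega$, yields $\Norm{T_Q(\sigma)}{L^q(\omega)}^q\eqsim B_Q$ for an explicit $B_Q$ in terms of the atomic data, and therefore $W_{T,\sigma}^q[\omega]\eqsim\sup_Q 1_Q B_Q/\sigma(Q)$. Running a Calder\'on--Zygmund stopping-cubes construction for the functional $Q\mapsto B_Q/\sigma(Q)$ (stopping at doubling) produces a $\sigma$-sparse family $\mathcal{F}$ for which
\[
  \Norm{W_{T,\sigma}^q[\omega]}{L^{r/q}(\sigma)}^{r/q}\eqsim\sum_{F\in\mathcal{F}}B_F^{r/q}\sigma(F)^{1-r/q}\eqsim\sum_{F\in\mathcal{F}}\Norm{T_F(\sigma)}{L^q(\omega)}^r\sigma(F)^{-r/p},
\]
where $1-r/q=-r/p$ follows from $1/r=1/q-1/p$. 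This is essentially the content of Proposition~\ref{prop:abstractWolff}.

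For the discrete Wolff potential, writing $W_{\lambda,\omega}^q[\sigma]=\sum_Q c_Q 1_Q$ with $c_Q=\lambda_Q\omega(Q)^{2-q}b_Q^{q-1}$, the same Carleson identity applied with measure $\sigma$ and exponent $s=r/q>1$ (valid since $p>q$ forces $r<\infty$) yields
\[
  \Norm{W_{\lambda,\omega}^q[\sigma]}{L^{r/q}(\sigma)}^{r/q}\eqsim\sum_Q c_Q\sigma(Q)\Psi_Q^{r/q-1}, \qquad \Psi_Q:=\sum_{Q'\supseteq Q}c_{Q'}.
\]

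The main obstacle is matching these two expressions: the left-hand side is a sparse sum over $\mathcal{F}$, whereas the right-hand side is indexed by all cubes. My plan is to show that both reduce to a sparse sum over essentially the same family, by running a second stopping-cubes argument adapted to $Q\mapsto\Psi_Q$ and verifying compatibility of the two doubling conditions modulo constants. The key inputs are: (i) the elementary monotonicity $\omega(Q')\leq\omega(Q)$ for $Q'\subseteq Q$, which together with $q>1$ allows one to absorb the $\omega$-exponent $2-q$ appearing in $c_Q$ into the corresponding building blocks of $B_F$; and (ii) the geometric convergence of $\sum_{Q'\supseteq Q}c_{Q'}$ and of $\sum_{F\in\mathcal{F},\,F\supseteq Q}(B_F/\sigma(F))^{r/q-1}$ along chains, which reduces both sums to comparable atomic contributions of the form $\lambda_Q\sigma(Q)\omega(Q)b_Q^{q-1}$ paired with matching profile factors. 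Collecting these bounds yields the asserted equivalence.
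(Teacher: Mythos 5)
There is a genuine gap, and it sits exactly at what you yourself call ``the main obstacle.'' Your steps 1--3 produce two valid but structurally mismatched representations: a sparse-family sum $\sum_{F\in\mathcal{F}}\Norm{T_F(\sigma)}{L^q(\omega)}^r\sigma(F)^{-r/p}$ for the abstract Wolff norm, and an all-cubes sum $\sum_Q c_Q\sigma(Q)\Psi_Q^{r/q-1}$ with $\Psi_Q=\sum_{Q'\supseteq Q}c_{Q'}$ for the discrete Wolff norm. The proposed bridge between them is never actually built: you invoke ``geometric convergence of $\sum_{Q'\supseteq Q}c_{Q'}$ and of $\sum_{F\supseteq Q}(B_F/\sigma(F))^{r/q-1}$ along chains,'' but for arbitrary nonnegative coefficients $\lambda_Q$ and arbitrary locally finite measures $\sigma,\omega$ there is no geometric decay of the quantities $c_{Q'}$ or $B_F/\sigma(F)$ along a chain of cubes --- the stopping construction only gives a factor-$2$ \emph{increase} condition at stopping times, not decay of the tails $\Psi_Q$ --- so the claimed reduction ``to comparable atomic contributions'' does not follow. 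Likewise, ``verifying compatibility of the two doubling conditions modulo constants'' is a statement of intent, not an argument; this comparison is the entire content of the proposition, and as written the proof does not establish either inequality between the two norms.

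For contrast, the paper closes this step with two applications of Lemma~\ref{lem:dyadicSum} and no stopping families at all: first, with exponent $q$ and measure $\omega$, it gives $\tau_R=\Norm{T_R(\sigma)}{L^q(\omega)}^q\eqsim\sum_{Q\subseteq R}\alpha_Q\sigma(Q)=\int_R\big(\sum_{Q\subseteq R}\alpha_Q1_Q\big)\ud\sigma$, where $\sum_Q\alpha_Q1_Q=W^q_{\lambda,\omega}[\sigma]$; hence $W^q_{T,\sigma}[\omega]\eqsim\sup_R 1_R\,\ave{\phi_R}^\sigma_R$ with $\phi_R$ the localized partial sums of the discrete Wolff potential. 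Second, the other equivalence in Lemma~\ref{lem:dyadicSum}, now with exponent $s=r/q>1$ and measure $\sigma$, says precisely that $\bNorm{\sup_R 1_R\ave{\phi_R}^\sigma_R}{L^{r/q}(\sigma)}\eqsim\Norm{\sum_Q\alpha_Q1_Q}{L^{r/q}(\sigma)}$, which is the assertion. The nontrivial ``matching'' you are trying to do by hand is exactly this second equivalence of the Cascante--Ortega--Verbitsky lemma (which in your write-up you only use in its energy/tail form); if you want to avoid quoting it, you would have to reprove it (e.g.\ via a maximal-function or good-$\lambda$ argument), and your chain-decay heuristic is not a substitute for that.
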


This discussion provides the following analysis of the equivalence ``boundedness $\Leftrightarrow$ discrete Wolff bound'' established by Tanaka's Theorem~\ref{thm:Tanaka}: The next few implications are valid for any positive linear operator:
\begin{equation*}
  \text{boundedness }\Rightarrow\text{ sequential testing }\Leftrightarrow
  \text{ abstract Wolff, }\phantom{\Leftrightarrow\text{ discrete Wolff}.}
\end{equation*}
whereas the concrete structure \eqref{eq:TandTQ} is only needed to prove that:
\begin{equation*}
  \text{boundedness } \Leftarrow \text{ sequential testing, }\phantom{\Leftrightarrow}
  \text{ abstract Wolff } \Leftrightarrow\text{ discrete Wolff}.
\end{equation*}

We feel that this unified and general point of view will be helpful in identifying potential-type characterizations for other operators as well.

In the second part of this paper, we apply these methods to study the bilinear version of \eqref{eq:TandTQ},
\begin{equation}\label{eq:TandTQbil}
\begin{split}
 T(f_1\sigma_1,f_2\sigma_2)&=\sum_{Q\in\mathcal{D}}\lambda_Q\int_Q f_1\ud\sigma_1\cdot\int_Q f_2\ud\sigma_2\cdot 1_Q, \quad\lambda_Q \geq0,\\
T_Q(f_1\sigma_1,f_2\sigma_2)&=\sum_{\substack{Q'\in\mathcal{D}:\\Q'\subseteq Q}}\lambda_{Q'}\int_{Q'} f_1\ud\sigma_1\cdot\int_{Q'} f_2\ud\sigma_2\cdot 1_{Q'}.
\end{split}
\end{equation}
and we characterize the boundedness of
\begin{equation}\label{eq:bilBd}
 T(\,\cdot\,\sigma_1,\,\cdot\,\sigma_2):L^{p_1}(\sigma_1)\times L^{p_2}(\sigma_2)\to L^q(\omega)
\end{equation}
for all values of $p_1,p_2,q\in(1,\infty)$. Writing $p_3:=q':=q/(q-1)$, this was previously known in the case that $1/p_i+1/p_j\geq 1$ for all $i\neq j$ by Li and Sun \cite{li2013}, and extended to $1/p_1+1/p_2+1/p_3\geq 1$ by Tanaka \cite{tanaka2014b}, but the case of $1/p_1+1/p_2+1/p_3< 1$ remained open until now.
In this paper we prove the following sequential characterization, and use it to extend the discrete-Wolff-potential-type characterization of the boundedness of \eqref{eq:bilBd} to the previously unknown case of $\frac 1{p_1}+\frac 1{p_2}+\frac 1{p_3}<1$.

\begin{theorem}\label{thm:bilinear}Let $T$ be defined as in \eqref{eq:TandTQbil}. For $p_1,p_2,p_3\in(1,\infty)$ and three measures $\sigma_1,\sigma_2,\sigma_3$, we have
\begin{equation*}
  \| T(\,\cdot\,\sigma_1,\,\cdot\,\sigma_2)\|_{L^{p_1}(\sigma_1)\times L^{p_2}(\sigma_2)\to L^{p_3'}(\sigma_3)}\eqsim \sum_{(i,j,k)\in S_3} \mathfrak{T}_{i,j,k}
  \eqsim \sum_{(i,j,k)\in S_3} \tilde{\mathfrak{T}}_{i,j,k},
\end{equation*}
where $S_3$ is the set of all permutations $(i,j,k)$ of $(1,2,3)$, and the quantities $ \mathfrak{T}_{i,j,k}$ and $ \tilde{\mathfrak{T}}_{i,j,k}$ are defined by
\begin{equation*}
\begin{split}
  \mathfrak{T}_{i,j,k} :=\sup_{\cf_j,\cf_k} &\left\|\bigg\{ \bigg\|\bigg\{ \frac{\|T_{F_j}(\sigma_j,\sigma_k)\|_{L^{p_i'}(\sigma_i)}}{\sigma_j(F_j)^{1/p_j}\sigma_k(F_k)^{1/p_k}}
      \bigg\}_{F_j\in\cf_j : F_j\subseteq F_k}\bigg\|_{\ell^{r_k}} \bigg\}_{F_k\in\cf_k}\right\|_{\ell^r}, \\
  \tilde{\mathfrak{T}}_{i,j,k} :=\sup_{\cf_k,\cf_j(F_k)}
  & \left\|\bigg\{ \bigg\|\bigg\{ \frac{\|T_{F_j}(\sigma_j,\sigma_k)\|_{L^{p_i'}(\sigma_i)}}{\sigma_j(F_j)^{1/p_j}\sigma_k(F_k)^{1/p_k}}
      \bigg\}_{F_j\in\cf_j(F_k)}\bigg\|_{\ell^{r_k}} \bigg\}_{F_k\in\cf_k}\right\|_{\ell^r},
\end{split}
\end{equation*}
where
\begin{itemize}
  \item the supremum $\sup_{\cf_j,\cf_k}$ is over all $\sigma_j$-sparse collections $\cf_j$ and all $\sigma_k$-sparse collections $\cf_k$;
  \item the supremum $\sup_{\cf_k,\cf_j(F_k)}$ is over all $\sigma_k$-sparse collections $\cf_k$ and, for each $F_k\in\cf_k$, over all $\sigma_j$-sparse collections $\cf_j(F_k)$ of cubes contained in $F_k$;
  \item the exponents are defined by
\begin{equation}\label{def:sequentialexponents}
   \frac{1}{r_k}:=\big(1-\frac{1}{p_i}-\frac{1}{p_j}\big)_+, \quad\text{ and } \quad\frac{1}{r }:=\big(1-\frac{1}{p_1}-\frac{1}{p_2} -\frac{1}{p_3}\big)_+
\end{equation}
for all $(i,j,k)\in S_3$.
\end{itemize}
\end{theorem}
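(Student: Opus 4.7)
The plan is to reformulate the bilinear boundedness \eqref{eq:bilBd} as an estimate on the symmetric trilinear form
$$\Lambda(f_1\sigma_1, f_2\sigma_2, f_3\sigma_3) := \sum_{Q\in\cd}\lambda_Q\prod_{i=1}^{3}\int_Q f_i \ud\sigma_i,$$
which is bounded $L^{p_1}(\sigma_1)\times L^{p_2}(\sigma_2)\times L^{p_3}(\sigma_3)\to\br$ if and only if \eqref{eq:bilBd} holds. Each permutation $(i,j,k)\in S_3$ then corresponds to singling out variable $i$ as the ``output'' to be dualized, while $j$ and $k$ are ``input'' variables on which iterated stopping-time decompositions are performed at two nested scales. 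I plan to prove three statements: (i) necessity $\mathfrak{T}_{i,j,k}\lesssim \|T(\,\cdot\,\sigma_1,\,\cdot\,\sigma_2)\|$ for each permutation; (ii) sufficiency $\|T(\,\cdot\,\sigma_1,\,\cdot\,\sigma_2)\|\lesssim \sum_{(i,j,k)\in S_3}\tilde{\mathfrak{T}}_{i,j,k}$; and (iii) the equivalence $\tilde{\mathfrak{T}}_{i,j,k}\eqsim \mathfrak{T}_{i,j,k}$.

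\emph{Necessity.} Fix a permutation $(i,j,k)$ and sparse collections $\cf_j,\cf_k$. Following the linearization template behind Theorem~\ref{thm:seqTesting}, build
$$f_j:=\sum_{F_j\in\cf_j} a_{F_j}\sigma_j(F_j)^{-1/p_j}1_{E_j(F_j)},\qquad f_k:=\sum_{F_k\in\cf_k} b_{F_k}\sigma_k(F_k)^{-1/p_k}1_{E_k(F_k)},$$
so that $\sigma_j$- and $\sigma_k$-sparsity give $\|f_j\|_{L^{p_j}(\sigma_j)}\lesssim \|a\|_{\ell^{p_j}}$ and $\|f_k\|_{L^{p_k}(\sigma_k)}\lesssim \|b\|_{\ell^{p_k}}$. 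Pairing $\Lambda(f_j\sigma_j,f_k\sigma_k, g\sigma_i)$ with $g\in L^{p_i}(\sigma_i)$, cube-by-cube contributions using $\sigma_j(E_j(F_j)\cap Q)\eqsim \sigma_j(Q)$ deep inside the stopping tree (up to the standard ``principal cube exchange'' loss absorbed by a constant) reassemble into $\int g\cdot T_{F_j}(\sigma_j,\sigma_k)\ud\sigma_i$, paired over $F_j\subseteq F_k$. Duality in $L^{p_i}(\sigma_i)$ and in the iterated $\ell^r(\ell^{r_k})$-space then converts the operator-norm bound into the required $\mathfrak{T}_{i,j,k}\lesssim \|T(\,\cdot\,\sigma_1,\,\cdot\,\sigma_2)\|$.

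\emph{Sufficiency via iterated stopping.} For nonnegative $f_\ell\in L^{p_\ell}(\sigma_\ell)$ of unit norm, partition $\cd$ into six sub-collections $\cd_{i,j,k}$, indexed by $(i,j,k)\in S_3$, according to which pair of averages $(\langle f_j\rangle_{Q,\sigma_j},\langle f_k\rangle_{Q,\sigma_k})$ dominates the third in an appropriate relative sense, so that on the associated piece $\Lambda_{i,j,k}$ the variable $i$ is naturally dualized while $f_j,f_k$ undergo nested stopping. An outer stopping on $f_k$ produces a $\sigma_k$-sparse principal family $\cf_k$, and an inner stopping on $f_j$ inside each $F_k\in\cf_k$ produces a $\sigma_j$-sparse $\cf_j(F_k)\subseteq\{Q:Q\subseteq F_k\}$. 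On the resulting trees, $\langle f_j\rangle\eqsim \alpha_{F_j}$ and $\langle f_k\rangle\eqsim\beta_{F_k}$, yielding
$$\Lambda_{i,j,k}(f_1\sigma_1,f_2\sigma_2,f_3\sigma_3)\lesssim \sum_{F_k\in\cf_k}\beta_{F_k}\sum_{F_j\in\cf_j(F_k)}\alpha_{F_j}\int f_i\cdot T_{F_j}(\sigma_j,\sigma_k)\,\ud\sigma_i.$$
Apply H\"older's inequality in a nested manner: first in the inner integral (producing $\|T_{F_j}(\sigma_j,\sigma_k)\|_{L^{p_i'}(\sigma_i)}$), then in the $F_j$-sum with exponents dictated by $1/r_k=(1-1/p_i-1/p_j)_+$ (extracting the $\ell^{r_k}$-norm of the normalized testing quantities and absorbing $\{\alpha_{F_j}\sigma_j(F_j)^{1/p_j}\}$ via the Carleson embedding $\sum \alpha_{F_j}^{p_j}\sigma_j(E_j(F_j))\lesssim \|f_j\|_{L^{p_j}(\sigma_j)}^{p_j}$), and finally in the $F_k$-sum with exponents matched to $1/r$, absorbing $\{\beta_{F_k}\sigma_k(F_k)^{1/p_k}\}$ against $\|f_k\|_{L^{p_k}(\sigma_k)}$. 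What remains is exactly $\tilde{\mathfrak{T}}_{i,j,k}\cdot\prod_\ell \|f_\ell\|_{L^{p_\ell}(\sigma_\ell)}$.

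\emph{Equivalence and main obstacle.} The direction $\mathfrak{T}_{i,j,k}\leq \tilde{\mathfrak{T}}_{i,j,k}$ is immediate via $\cf_j(F_k):=\{F_j\in\cf_j:F_j\subseteq F_k\}$. For the reverse, given $\cf_k$ and $\{\cf_j(F_k)\}_{F_k}$ almost realizing $\tilde{\mathfrak{T}}_{i,j,k}$, set $\cf_j:=\bigcup_{F_k\in\cf_k}\cf_j(F_k)$ and select new disjoint stopping sets $\widetilde E_j(F_j)\subseteq E_j^{(F_k)}(F_j)$ of comparable $\sigma_j$-measure; using that distinct $F_k$'s have disjoint $E_k(F_k)$'s, a selection/disjointification splits $\cf_j$ into $O(1)$ globally $\sigma_j$-sparse sub-families whose contributions are comparable, giving $\tilde{\mathfrak{T}}_{i,j,k}\lesssim \mathfrak{T}_{i,j,k}$. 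The chief obstacle lies in the sufficiency step in the regime $1/p_1+1/p_2+1/p_3<1$ (previously open), where both $r_k$ and $r$ in \eqref{def:sequentialexponents} are finite and the iterated H\"older must be balanced against \emph{two} simultaneous Carleson-type embeddings with no slack; the sequential $\ell^{r_k}$- and $\ell^r$-quasi-norms, as opposed to the weaker $\ell^\infty$-type Sawyer testing, are precisely what accommodates this deficit, extending the iteration scheme of the linear Theorem~\ref{thm:seqTesting} uniformly across all ranges of $(p_1,p_2,p_3)\in(1,\infty)^3$.
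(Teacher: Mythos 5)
Your three-step architecture (necessity of the weak constants, sufficiency of the strong constants, direct equivalence $\tilde{\mathfrak{T}}_{i,j,k}\eqsim\mathfrak{T}_{i,j,k}$) differs from the paper's, which proves necessity of the \emph{stronger} constants $\tilde{\mathfrak{T}}_{i,j,k}\lesssim\|T\|$ and sufficiency of the \emph{weaker} ones $\|T\|\lesssim\sum\mathfrak{T}_{i,j,k}$, so that no direct combinatorial comparison is ever needed. This is not just a stylistic difference: two of your three steps have genuine gaps. In the necessity step, your test functions are supported on the disjoint sets $E_j(F_j)$, and the claim $\sigma_j(E_j(F_j)\cap Q)\eqsim\sigma_j(Q)$ ``deep inside the stopping tree'' is false: sparseness gives $\sigma_j(E_j(F_j))\geq\tfrac12\sigma_j(F_j)$ only at the top of each stopping cube, and for $Q$ inside a stopping child of $F_j$ the set $E_j(F_j)\cap Q$ can be empty, so the pairing cannot reassemble $T_{F_j}(\sigma_j,\sigma_k)$, which integrates the full measure $\sigma_j(Q')$ over \emph{all} $Q'\subseteq F_j$. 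The working argument uses the full indicators: bound $T_{F_j}(\sigma_j,\sigma_k)\le T(1_{F_j}\sigma_j,1_{F_k}\sigma_k)$, dualize the iterated $\ell^{r}(\ell^{r_k})$ norm, use positivity to pull everything inside a single application of $T$ (splitting the outer dual weights between the two arguments of $T$ with exponents tied to $p_j,p_k$), and control the $L^{p_j}(\sigma_j)$- and $L^{p_k}(\sigma_k)$-norms of the resulting weighted sums of indicators by the sparse Pythagoras inequality (Lemma~\ref{lem:pythagoras}); this in fact yields necessity of $\tilde{\mathfrak{T}}_{i,j,k}$, not just $\mathfrak{T}_{i,j,k}$.

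Because you only prove necessity of the weak form, your proof must stand on step (iii), and the disjointification argument there does not work: the union $\bigcup_{F_k\in\cf_k}\cf_j(F_k)$ attached to a nested $\sigma_k$-sparse family $\cf_k$ need not split into $O(1)$ globally $\sigma_j$-sparse subfamilies (along a long $\sigma_k$-sparse chain one can choose the inner families so that their union violates the Carleson packing condition by an arbitrarily large factor), and the paper never establishes $\tilde{\mathfrak{T}}_{i,j,k}\lesssim\mathfrak{T}_{i,j,k}$ combinatorially --- it obtains it only through the chain $\tilde{\mathfrak{T}}\lesssim\|T\|\lesssim\sum\mathfrak{T}$. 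Finally, in your sufficiency step the exponent bookkeeping as described does not close: after extracting $\|T_{F_j}(\sigma_j,\sigma_k)\|_{L^{p_i'}(\sigma_i)}$ you are left, for fixed $F_k$, with only two sequences in the $F_j$-sum, lying in $\ell^{p_j}$ (Carleson) and $\ell^{r_k}$, and $\tfrac1{p_j}+\tfrac1{r_k}=1-\tfrac1{p_i}<1$ whenever $r_k<\infty$, so H\"older produces the larger $\ell^{p_j'}$-norm of the testing ratios rather than the required $\ell^{r_k}$-norm. The missing $\tfrac1{p_i}$ must be supplied by keeping the $f_i$-factor localized: one introduces a third stopping family $\cf_i$ built from $f_i$ and carries the local quantities $\big(\sum_{F_i\in\cf_i(F_j,F_k)}(\langle f_i\rangle^{\sigma_i}_{F_i})^{p_i}\sigma_i(F_i)\big)^{1/p_i}$ as an $\ell^{p_i}$-sequence through both H\"older applications, so that after summing over $F_j$ and $F_k$ they recombine into a single Carleson sum over $\cf_i$; this is exactly how the paper's Subsection~\ref{subsec:suf-sequential} is organized, and it is the point your sketch glosses over (a global bound by $\|f_i\|_{L^{p_i}(\sigma_i)}$, or even by $\|f_i1_{F_j}\|_{L^{p_i}(\sigma_i)}$, is not summable over the overlapping sparse cubes $F_j$).
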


Note that trivially $\mathfrak{T}_{i,j,k}\leq\tilde{\mathfrak{T}}_{i,j.k}$, since $\{F_j\in\cf_j:F_j\subseteq F_k\}$ is a special case of a collection $\cf_j(F_k)$. We shall show that even the stronger form of sequential testing (with $\tilde{\mathfrak{T}}_{i,j,k}$) is necessary, while already the weaker form (with $\mathfrak{T}_{i,j,k}$) is sufficient, for the  estimate on $\Norm{T(\,\cdot\,\sigma_1,\,\cdot\,\sigma_2)}{}$.  In fact, analogously to the linear case, we prove that the sequential testing is necessary for the boundedness \eqref{eq:bilBd} for any positive bilinear operator, and that the sequential testing is equivalent to a norm bound for an abstract Wolff potential.

In the previously unknown case $\frac{1}{p_1}+\frac{1}{p_2}+\frac{1}{p_3}<1$, we can reformulate the characterization in terms of a new two-measure Wolff potential as follows:

\begin{theorem}\label{thm:discreteWolff}
Let $\sigma_1,\sigma_2,\sigma_3$ be a locally finite Borel measures on $\mathbb R^n$, and $T(\,\cdot\,\sigma_1, \,\cdot\,\sigma_2)$ be as in \eqref{eq:TandTQbil}.
Then
\begin{equation*}
  \| T(\,\cdot\,\sigma_1, \,\cdot\,\sigma_2)\|_{L^{p_1}(\sigma_1)\times L^{p_2}(\sigma_2)\rightarrow L^{p_3'}(\sigma_3)}
  \eqsim \sum_{(i,j,k)\in S_3}\| \mathcal W_{\sigma_i, \sigma_j}[\sigma_k]^{1/{r_k}}\|_{L^r(\sigma_k)},
\end{equation*}
where $r_k$ and $r$ are defined as in \eqref{def:sequentialexponents}, and the two-measure Wolff potential is defined by
\begin{equation}\label{eq:bilWolff}
  \mathcal W_{\sigma_i, \sigma_j}[\sigma_k]:=  \sum_{Q\in\mathcal D} \lambda_Q \lambda_{Q,\sigma_i}^{p_i'-1}\lambda_{Q,\sigma_j,\sigma_i}^{\frac{r_k}{p_i'}-1}\sigma_i(Q)
  \sigma_j(Q)  1_Q,
\end{equation}
in terms of the coefficients
\begin{equation}\label{eq:defLambdaQsigma}
\begin{split}
\lambda_{Q,\sigma_i} &:= \frac 1{\sigma_i(Q)}\sum_{Q'\subseteq Q}\lambda_{Q'}\sigma_1(Q')\sigma_2(Q')\sigma_3(Q') \\
\lambda_{Q,\sigma_j,\sigma_i}
&:= \frac 1{\sigma_j(Q)}\sum_{Q'\subseteq Q}\lambda_{Q'}\lambda_{Q',\sigma_i}^{p_i'-1}\sigma_1(Q')\sigma_2(Q')\sigma_3(Q'),\,\, j\neq i.
\end{split}
\end{equation}
\end{theorem}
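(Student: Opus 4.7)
The plan is to deduce Theorem~\ref{thm:discreteWolff} directly from the sequential-testing characterization of Theorem~\ref{thm:bilinear}, which we may invoke. Concretely, since $\Norm{T(\,\cdot\,\sigma_1,\,\cdot\,\sigma_2)}{}\eqsim\sum_{(i,j,k)\in S_3}\mathfrak{T}_{i,j,k}$, it suffices to establish the term-by-term equivalence
\begin{equation*}
  \mathfrak{T}_{i,j,k}\eqsim\Norm{\mathcal{W}_{\sigma_i,\sigma_j}[\sigma_k]^{1/r_k}}{L^r(\sigma_k)}
\end{equation*}
for each $(i,j,k)\in S_3$, the bilinear analogue of the linear equivalence between sequential testing and a discrete Wolff potential (Proposition~\ref{prop:absVsConcrete}). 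In the range $\frac{1}{p_1}+\frac{1}{p_2}+\frac{1}{p_3}<1$ the exponents satisfy $1<p_i'<r_k<r<\infty$, since $\frac{1}{r_k}=\frac{1}{p_i'}-\frac{1}{p_j}$ and $\frac{1}{r}=\frac{1}{r_k}-\frac{1}{p_k}$, so every $L^s$- and $\ell^s$-power appearing below has $s>1$.

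I would peel off the three nested norms from the inside out, at each layer applying the standard dyadic $s$-power identity (the discrete Wolff / Cascante--Ortega--Verbitsky equivalence)
\begin{equation*}
  \int\Big(\sum_Q a_Q 1_Q\Big)^s\ud\mu
  \eqsim\sum_Q a_Q\Big(\frac{1}{\mu(Q)}\sum_{Q'\subseteq Q}a_{Q'}\mu(Q')\Big)^{s-1}\mu(Q),\qquad s>1.
\end{equation*}
The innermost layer applies this with $s=p_i'$, $\mu=\sigma_i$ and $a_Q=\lambda_Q\sigma_j(Q)\sigma_k(Q)1_{Q\subseteq F_j}$, using the definition \eqref{eq:defLambdaQsigma} of $\lambda_{Q,\sigma_i}$ to obtain
\begin{equation*}
  \Norm{T_{F_j}(\sigma_j,\sigma_k)}{L^{p_i'}(\sigma_i)}^{p_i'}
  \eqsim\sum_{Q\subseteq F_j}\lambda_Q\lambda_{Q,\sigma_i}^{p_i'-1}\sigma_i(Q)\sigma_j(Q)\sigma_k(Q).
\end{equation*}

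For the middle layer, I would compare the supremum of the $\ell^{r_k}$-norm over $\sigma_j$-sparse families $\cf_j(F_k)$ inside $F_k$ with a sparse-sum-to-integral equivalence: taking the supremum over sparse families is, by standard stopping-time constructions ($\sigma_j$-principal cubes) together with Carleson embedding, equivalent to the $L^{r_k/p_i'}(\sigma_j)$-norm of the corresponding nested dyadic sum. Applying the $s$-power identity once more with $s=r_k/p_i'>1$ and $\mu=\sigma_j$ produces the second coefficient $\lambda_{Q,\sigma_j,\sigma_i}^{r_k/p_i'-1}$ from \eqref{eq:defLambdaQsigma}, so that the result (restricted to $Q\subseteq F_k$) matches $\mathcal W_{\sigma_i,\sigma_j}[\sigma_k]$ on $F_k$ divided by $\sigma_k(F_k)^{r_k/p_k}$. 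The outer $\ell^r$-supremum over $\sigma_k$-sparse families $\cf_k$ is handled identically: a third invocation of the sparse-sum/$L^s$-norm equivalence, now with $s=r/r_k>1$ and $\mu=\sigma_k$, promotes the remaining sparse supremum to the integral $\Norm{\mathcal W_{\sigma_i,\sigma_j}[\sigma_k]^{1/r_k}}{L^r(\sigma_k)}^r$.

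The main obstacle is the bidirectional nature of each of the three layers: the upper bounds are Carleson-type embedding inequalities on nested dyadic structures, while the matching lower bounds demand explicit construction of near-extremal sparse families at each stage (stopping-time principal-cube selections adapted to $\sigma_i$, $\sigma_j$, and $\sigma_k$ respectively), together with the verification that these selections are mutually compatible so that the three expressions $\lambda_{Q,\sigma_i}$, $\lambda_{Q,\sigma_j,\sigma_i}$, and the outer $\sigma_k$-integration assemble into exactly the coefficients and exponents written in \eqref{eq:bilWolff}. Careful bookkeeping of the iterated exponents via $1/r_k=1/p_i'-1/p_j$ and $1/r=1/r_k-1/p_k$ --- valid precisely in the regime $\frac{1}{p_1}+\frac{1}{p_2}+\frac{1}{p_3}<1$ --- guarantees that all factors line up, completing the proof.
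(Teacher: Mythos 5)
Your first half is essentially the paper's argument: reduce to Theorem~\ref{thm:bilinear}, then bound $\mathfrak{T}_{i,j,k}$ by the Wolff norm by peeling the layers with Lemma~\ref{lem:dyadicSum} (the $s$-power identity with $s=p_i'$ and then $s=r_k/p_i'$) combined with sparseness/Carleson--maximal-function bounds; the exponent bookkeeping you indicate is correct. The problem is the converse direction, which you only gesture at. You claim the term-by-term equivalence $\mathfrak{T}_{i,j,k}\eqsim\Norm{\mathcal W_{\sigma_i,\sigma_j}[\sigma_k]^{1/r_k}}{L^r(\sigma_k)}$ and propose to prove $\Norm{\mathcal W}{}\lesssim\mathfrak{T}_{i,j,k}$ by constructing near-extremal sparse families. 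Two concrete issues arise. First, the stopping-time construction naturally produces, for each outer cube $F_k$, an inner $\sigma_j$-sparse family \emph{depending on} $F_k$; this yields at best the strong constant $\tilde{\mathfrak{T}}_{i,j,k}$, not the weak one $\mathfrak{T}_{i,j,k}$ appearing in your claimed equivalence (this particular point is repairable, since Theorem~\ref{thm:bilinear} controls both constants by $\Norm{T}{}$). Second, and more seriously, the ``reverse peeling'' for the \emph{sum-type} potential $\mathcal W$ is not a routine bookkeeping matter: when you localize $\mathcal W$, or the intermediate sum $\sum_{Q\subseteq F_k}\lambda_Q\lambda_{Q,\sigma_i}^{p_i'-1}\sigma_i(Q)\sigma_k(Q)1_Q$, to a stopping cube $F_j$ of the next generation, the restriction to $F_j$ still contains the contributions of all cubes $Q$ with $F_j\subsetneq Q\subseteq F_k$, and these are not dominated by $\Norm{T_{F_j}(\sigma_j,\sigma_k)}{L^{p_i'}(\sigma_i)}$ without a carefully chosen stopping functional (superadditivity as in Lemma~\ref{lem:dyadicSup}, or the sup-type auxiliary potentials $W_{Q,(i,j,k)}$ used in Theorem~\ref{thm:ge2}, exist precisely to handle this). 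You neither specify the stopping functionals nor explain how the exponents $p_i'-1$ and $r_k/p_i'-1$ survive this localization, so the lower bound is a genuine gap rather than a deferred technicality.

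For comparison, the paper does not attempt $\Norm{\mathcal W}{}\lesssim\mathfrak{T}_{i,j,k}$ (or even $\lesssim\tilde{\mathfrak{T}}_{i,j,k}$) at all. It closes the circle by proving $\Norm{\mathcal W_{\sigma_i,\sigma_j}[\sigma_k]^{1/r_k}}{L^r(\sigma_k)}\lesssim\Norm{T}{}$ \emph{directly}: a two-fold duality argument with $g\in L^{(r/r_k)'}(\sigma_k)$ and $h\in L^{(r_k/p_i')'}(\sigma_j)$, in which the iterated coefficients $\lambda_{Q,\sigma_i}$, $\lambda_{Q,\sigma_j,\sigma_i}$ are absorbed by inserting dyadic maximal functions $(M^{\sigma_k}_{\mathcal D}g)^{p_i'/r_k}$ and $(M^{\sigma_j}_{\mathcal D}h)^{1/p_i'}$ and recognizing the result, via Lemma~\ref{lem:dyadicSum} read backwards, as $\Norm{T(\,\cdot\,\sigma_j,\,\cdot\,\sigma_k)}{L^{p_i'}(\sigma_i)}^{p_i'}$ applied to admissible test functions. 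If you want to complete your route instead, you would need to either carry out the stopping-time argument with explicitly specified functionals (and settle for $\tilde{\mathfrak{T}}$), or supply a bilinear analogue of Proposition~\ref{prop:absVsConcrete} relating $\mathcal W_{\sigma_i,\sigma_j}[\sigma_k]$ to the sup-type abstract potential of Theorem~\ref{thm:ge2}; as written, that step is missing.
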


We note that the need of some kind of a two-measure Wolff potential, to describe the boundedness in this case, was already suggested by Tanaka \cite[Remark 1.4]{tanaka2014b} but, as far as we know, this is the first time that an appropriate potential is actually written down explicitly.

While we mainly work with the dyadic model operators, as an application, we give a characterization for the bilinear fractional integral operator $I_\alpha (\,\cdot\,\sigma_1,\,\cdot\,\sigma_2):L^{p_1}(\sigma_1)\times L^{p_2}(\sigma_2)\to L^q(\omega)$ in terms of sequential testing, and of an abstract Wolff potential, which generalizes the results in \cite{li2013}. We also characterize the boundedness of (linearized) maximal operators $M(\cdot \,\sigma\,):L^{p}(\sigma)\to L^q(\omega)$ and $M(\cdot \,\sigma_1,\,\cdot\,\sigma_2):L^{p_1}(\sigma_1)\times L^{p_2}(\sigma_2)\to L^q(\omega)$  in the cases $\frac{1}{p}<\frac{1}{q}$ and $\frac{1}{p_1}+\frac{1}{p_2}<\frac{1}{q}$, respectively. This extends the results in \cite{sawyer1982,sawyer1988,sawyer1996,liSun2013}.

\section{Basic lemmas of positive dyadic analysis}\label{sec:basiclemmas}

For a family $\cf\subseteq\mathcal{D}$, we denote
\begin{equation*}
  \operatorname{ch}_{\cf}(F):=\{\text{maximal }F'\subseteq F: F'\in\cf\},\quad
  E_{\cf}(F):=F\setminus\bigcup_{F'\in\operatorname{ch}_{\cf}(F)}F'.
\end{equation*}
The sets $E_{\cf}(F)$ are pairwise disjoint. Recalling Definition~\ref{def:sparse}, we note in particular that $\cf$ is $\sigma$-sparse if (but not only if) $\sigma(E_{\cf}(F))\geq\frac12\sigma(F)$, or equivalently
\begin{equation*}
  \sum_{F'\in\operatorname{ch}_{\cf}}\sigma(F')\leq\frac12\sigma(F).
\end{equation*}
Although sparseness is slightly more general, this is actually the only case that we need in this paper.

For $Q\in\mathcal{D}$, we denote by
\begin{equation*}
  \pi_{\cf}Q:=\pi_{\cf}^0 Q:=\min\{F\supseteq Q:F\in\cf\},\quad
  \pi^{k+1}_{\cf} Q:=\min\{F\supsetneq \pi_{\cf}^k Q :F\in\cf\}.
\end{equation*}

We sometimes specify a ``stopping family starting at $Q_0$ and defined by $\operatorname{ch}_{\cf}$'', where we give some explicit rule of constructing a disjoint collection $\operatorname{ch}_{\cf}(F)$ of dyadic (strict) subcubes $F'$ of a given $F$. By this we mean that $\cf$ is given as follows: We set $\cf_0:=\{Q_0\}$. Assuming that $\cf_k$ is already chosen, we define
\begin{equation*}
   \cf_{k+1}:=\bigcup_{F\in\cf_k}\operatorname{ch}_{\cf}(F),
\end{equation*}
where the last expression is meaningful by the rule that we specified. Then finally
\begin{equation*}
   \cf:=\bigcup_{k=0}^\infty\cf_k.
\end{equation*}

The dyadic maximal operator $M_{\cd}^\sigma $ adapted to a dyadic grid $\cd$ and a locally finite Borel measure $\sigma$ is defined by
\[
M_{\cd}^\sigma (f)(x):=\sup_{Q\in\cd}\frac{1_Q(x)}{\sigma(Q)}\int_Q f(y)\dsigma(y).
\] We state the boundedness of the maximal operator and (a special case of) the dyadic Carleson embedding theorem as the following well-known lemmas:
\begin{lemma}Let $1<p<\infty$. Let $\sigma$ be a locally finite Borel measure. Then
\[
\|M_{\cd}^\sigma (f)\|_{L^p(\sigma)}\le p' \|f\|_{L^p(\sigma)}.
\]
\end{lemma}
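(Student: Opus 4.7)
The plan is to establish the sharp weighted bound with constant $p'$ via the standard strong weak-type $(1,1)$ estimate followed by the layer cake and Hölder's inequality.

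First I would observe the following enhanced weak-type bound: for every $\lambda>0$,
\begin{equation*}
  \sigma(\{M_{\cd}^\sigma f>\lambda\})\le\frac{1}{\lambda}\int_{\{M_{\cd}^\sigma f>\lambda\}}|f|\ud\sigma.
\end{equation*}
The point is that the level set $\{M_{\cd}^\sigma f>\lambda\}$ is precisely the disjoint union of the maximal dyadic cubes $Q\in\cd$ satisfying $\sigma(Q)^{-1}\int_Q |f|\ud\sigma>\lambda$; summing the inequality $\lambda\sigma(Q)\le\int_Q |f|\ud\sigma$ over these maximal cubes yields the claim.

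Next I would use the layer-cake formula and Fubini. Assuming temporarily that $\|M_{\cd}^\sigma f\|_{L^p(\sigma)}<\infty$ (handled by truncation, e.g., restricting the supremum over $Q\in\cd$ in a large finite subcollection and passing to the limit via monotone convergence), the enhanced weak-type bound gives
\begin{equation*}
\begin{aligned}
  \|M_{\cd}^\sigma f\|_{L^p(\sigma)}^p
  &=p\int_0^\infty\lambda^{p-1}\sigma(\{M_{\cd}^\sigma f>\lambda\})\ud\lambda
  \le p\int_0^\infty\lambda^{p-2}\int_{\{M_{\cd}^\sigma f>\lambda\}}|f|\ud\sigma\ud\lambda \\
  &=p\int|f|\int_0^{M_{\cd}^\sigma f(x)}\lambda^{p-2}\ud\lambda\ud\sigma(x)
  =\frac{p}{p-1}\int |f|(M_{\cd}^\sigma f)^{p-1}\ud\sigma.
\end{aligned}
\end{equation*}
Applying Hölder's inequality with exponents $p$ and $p'$ yields
\begin{equation*}
  \|M_{\cd}^\sigma f\|_{L^p(\sigma)}^p\le p'\|f\|_{L^p(\sigma)}\|M_{\cd}^\sigma f\|_{L^p(\sigma)}^{p-1},
\end{equation*}
and dividing through by the (finite, positive) factor $\|M_{\cd}^\sigma f\|_{L^p(\sigma)}^{p-1}$ gives the desired inequality. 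Removing the truncation by monotone convergence finishes the argument.

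The main obstacle is essentially bookkeeping rather than conceptual: one must justify the a priori finiteness of $\|M_{\cd}^\sigma f\|_{L^p(\sigma)}$ before dividing, which is standard via truncation to a finite subfamily of $\cd$ (or to cubes of bounded generation within a fixed large ancestor) and monotone convergence letting the subfamily exhaust $\cd$. Beyond that, every step uses only the disjointness of maximal stopping cubes and Hölder's inequality.
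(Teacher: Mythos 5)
Your proof is correct. Note that the paper does not prove this lemma at all: it is stated as a ``well-known'' fact, so there is nothing to compare against beyond the literature, and your argument is precisely the standard one that yields the sharp constant $p'$ for the dyadic maximal operator with respect to an arbitrary locally finite measure (weak-type $(1,1)$ with integration only over the level set, layer-cake, H\"older, and division by $\|M_{\cd}^\sigma f\|_{L^p(\sigma)}^{p-1}$). One small remark: for a general locally finite $\sigma$ the maximal cubes in the level set $\{M_{\cd}^\sigma f>\lambda\}$ need not exist (e.g.\ if $\sigma$ is finite and the averages stay above $\lambda$ along an increasing chain of cubes), so the decomposition into maximal cubes should itself be run on the truncated operator associated with a finite subcollection of $\cd$; conveniently, the very truncation you introduce to guarantee a priori finiteness of $\|M_{\cd}^\sigma f\|_{L^p(\sigma)}$ also guarantees the existence of maximal cubes, and monotone convergence then removes the truncation exactly as you say.
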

\begin{lemma}\label{lm:carleson}Let
$1 \le p < \infty$. Let $\sigma$ be a locally finite Borel measure.
Suppose that $\cf$ is a $\sigma$-sparse collection.
Then
\[
\Big(\sum_{F\in\cf}(\langle f\rangle_F^\sigma)^p \sigma(F)\Big)^{1/p}\le 2p'\|f\|_{L^{p}(\sigma)}.
\]
\end{lemma}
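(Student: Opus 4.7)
The plan is to follow the standard proof of the dyadic Carleson embedding theorem: use sparseness to pass from the sum over $F\in\cf$ to an integral over pairwise disjoint sets, replace the averages by the dyadic maximal operator $M_\cd^\sigma$ via a pointwise bound, and then invoke the preceding $L^p(\sigma)$-boundedness lemma for $M_\cd^\sigma$.

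By $\sigma$-sparseness there exist, for each $F\in\cf$, sets $E(F)\subseteq F$ with $\sigma(E(F))\ge\tfrac12\sigma(F)$ that are pairwise disjoint as $F$ ranges over $\cf$. The first step is to absorb the cube measures into these disjoint sets via
$$\sum_{F\in\cf}(\langle f\rangle_F^\sigma)^p\sigma(F)\le 2\sum_{F\in\cf}(\langle f\rangle_F^\sigma)^p\sigma(E(F)).$$
The second step is to dominate the averages pointwise by the maximal function: since $E(F)\subseteq F\in\cd$, for every $x\in E(F)$ the supremum defining $M_\cd^\sigma f(x)$ includes the term $\langle f\rangle_F^\sigma$, so $\langle f\rangle_F^\sigma\le M_\cd^\sigma f(x)$ on $E(F)$. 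Integrating the $p$-th power over $E(F)$ and then summing, using the pairwise disjointness of the $\{E(F)\}_{F\in\cf}$, yields
$$\sum_{F\in\cf}(\langle f\rangle_F^\sigma)^p\sigma(E(F))\le \sum_{F\in\cf}\int_{E(F)}(M_\cd^\sigma f)^p\dsigma\le \int(M_\cd^\sigma f)^p\dsigma.$$

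Combining the two steps and taking $p$-th roots gives
$$\Big(\sum_{F\in\cf}(\langle f\rangle_F^\sigma)^p\sigma(F)\Big)^{1/p}\le 2^{1/p}\|M_\cd^\sigma f\|_{L^p(\sigma)}\le 2p'\|f\|_{L^p(\sigma)},$$
the last inequality being the preceding maximal-function bound together with $2^{1/p}\le 2$. There is no substantive obstacle: the entire argument is a two-line reduction once sparseness and the $L^p$-boundedness of $M_\cd^\sigma$ are in hand. The only caveat is the endpoint $p=1$, where $p'=\infty$ renders the stated estimate vacuous; a sharp $p=1$ form is obtained from exactly the same calculation and reads $\sum_{F\in\cf}\langle f\rangle_F^\sigma\sigma(F)\le 2\|M_\cd^\sigma f\|_{L^1(\sigma)}$, which is all that is ever needed in applications.
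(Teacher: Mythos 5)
Your proof is correct: the paper states this lemma without proof (as a well-known special case of the dyadic Carleson embedding theorem), and your argument — passing to the disjoint sets $E(F)$ via sparseness, dominating $\langle f\rangle_F^\sigma$ by $M_{\cd}^\sigma f$ on $E(F)$, and invoking the preceding maximal-function bound to get the constant $2^{1/p}p'\le 2p'$ — is exactly the standard intended argument. Your remark about the vacuous endpoint $p=1$ is also accurate.
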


The next lemma can be thought of as an $L^p$-variant of Pythagoras' theorem for functions adapted to a sparse collection:

\begin{lemma}[\cite{hanninenhytonen2014}]\label{lem:pythagoras}
Let $1 \le p < \infty$. Let $\sigma$ be a locally finite Borel measure. Let $\mathcal S$ be a $\sigma$-sparse collection of dyadic cubes. For each $S \in \mathcal S$, assume that $a_S$ is a non-negative function that is supported on $S$ and
constant on each $S'\in \ch_{\mathcal S}(S)$. Then
\[
  \left(\sum_{S}\|a_S\|_{L^p(\sigma)}^p\right)^{1/p}\le\BNorm{\sum_S a_S}{L^p(\sigma)}\le 3p \left(\sum_{S}\|a_S\|_{L^p(\sigma)}^p\right)^{1/p}.
\]
\end{lemma}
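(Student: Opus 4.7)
The plan is to prove the two inequalities separately. The lower bound is immediate: since $a_S\ge 0$ and $p\ge 1$, the pointwise inequality $\|\cdot\|_{\ell^p}\le\|\cdot\|_{\ell^1}$ gives $\sum_S a_S(x)^p\le \bigl(\sum_S a_S(x)\bigr)^p$, and integrating against $\sigma$ yields the left-hand bound.

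For the upper bound, the idea is to split $f:=\sum_S a_S$ into a \emph{local} part $g$ and an \emph{ancestral} part $h$ adapted to the $\mathcal S$-stopping structure, with $g$ keeping the value of $a_{S_0(x)}$ on each set $E_{\mathcal S}(S_0)$ and $h$ collecting the (then constant) contributions of the strictly larger $a_S$. Concretely, for $x\in E_{\mathcal S}(S_0(x))$ with $S_0(x):=\min\{S\in\mathcal S:x\in S\}$, the cube $\pi_{\mathcal S}^{k-1}S_0(x)$ is a child of $\pi_{\mathcal S}^{k}S_0(x)$ in $\mathcal S$ on which $a_{\pi_{\mathcal S}^{k}S_0(x)}$ is constant, so its value there equals the $\sigma$-average. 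Reindexing the ancestral sum by the cube over which the average is taken yields the clean decomposition $f=g+h$ with $g=\sum_S a_S\,1_{E_{\mathcal S}(S)}$ and $h=\sum_{S\in\mathcal S,\ S\text{ non-max}}\langle a_{\pi_{\mathcal S} S}\rangle_S^\sigma\,1_S$. Recognizing this Carleson-type form of $h$ is the main conceptual step.

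The local part is trivial from disjointness of the sets $E_{\mathcal S}(S)$: one has $\|g\|_{L^p(\sigma)}^p=\sum_S\int_{E_{\mathcal S}(S)}a_S^p\,\ud\sigma\le\sum_S\|a_S\|_{L^p(\sigma)}^p$. For $h$, I use duality: for nonnegative $\phi$ with $\|\phi\|_{L^{p'}(\sigma)}\le 1$ we have $\int h\phi\,\ud\sigma=\sum_S\langle a_{\pi_{\mathcal S} S}\rangle_S^\sigma\,\langle\phi\rangle_S^\sigma\,\sigma(S)$, and H\"older's inequality on $\ell^p\times\ell^{p'}$ with weight $\sigma(S)$ factors this as a product of two Carleson-type sums. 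The $\phi$-factor is at most $2p$ by Lemma~\ref{lm:carleson} applied with exponent $p'$. For the $a$-factor, the constancy of $a_{\pi_{\mathcal S} S}$ on $S$ converts $(\langle a_{\pi_{\mathcal S} S}\rangle_S^\sigma)^p\sigma(S)$ into $\int_S a_{\pi_{\mathcal S} S}^p\,\ud\sigma$; reindexing by $T:=\pi_{\mathcal S} S$ and using that the children $S\in\operatorname{ch}_{\mathcal S}(T)$ are disjoint subsets of $T$ then majorizes the $a$-factor by $\bigl(\sum_T\|a_T\|_{L^p(\sigma)}^p\bigr)^{1/p}$.

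Combining via the triangle inequality gives $\|f\|_{L^p(\sigma)}\le\|g\|_{L^p(\sigma)}+\|h\|_{L^p(\sigma)}\le(1+2p)\bigl(\sum_S\|a_S\|_{L^p(\sigma)}^p\bigr)^{1/p}\le 3p\bigl(\sum_S\|a_S\|_{L^p(\sigma)}^p\bigr)^{1/p}$ since $p\ge 1$. I expect the only nontrivial step to be the Carleson-type rewriting of $h$; once that is isolated, the remaining estimates are routine combinations of duality, H\"older, and the dyadic Carleson embedding already recorded as Lemma~\ref{lm:carleson}.
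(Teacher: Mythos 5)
Your proposal is correct. Note that the paper itself does not prove the upper bound: it only remarks that the first inequality is $\Norm{\cdot}{\ell^p}\le\Norm{\cdot}{\ell^1}$ and cites \cite{hanninenhytonen2014} for the second, so what you have written is a complete, self-contained argument, and it follows the standard route (the same kind of splitting used in the cited reference): $\sum_S a_S=g+h$ with $g=\sum_S a_S1_{E_{\mathcal S}(S)}$ handled by disjointness, and the Carleson-type part $h=\sum_{S}\langle a_{\pi_{\mathcal S}S}\rangle_S^\sigma 1_S$ handled by duality, H\"older, Lemma~\ref{lm:carleson} for the $\phi$-factor, and the constancy-plus-reindexing step for the $a$-factor, giving $1+2p\le 3p$. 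Two cosmetic points for a final write-up: the identity $f=g+h$ is obtained most cleanly from $1_T=1_{E_{\mathcal S}(T)}+\sum_{S\in\ch_{\mathcal S}(T)}1_S$ and $a_T 1_S=\langle a_T\rangle_S^\sigma 1_S$ for $S\in\ch_{\mathcal S}(T)$, which sidesteps any discussion of minimal cubes $S_0(x)$ and of points lying in infinitely many cubes of $\mathcal S$ (and of cubes with $\sigma(S)=0$, where the ``average'' should just mean the constant value of $a_{\pi_{\mathcal S}S}$ on $S$); and at the endpoint $p=1$ the appeal to Lemma~\ref{lm:carleson} with exponent $p'=\infty$ is not literally covered by its statement, but there the $\phi$-factor is trivially bounded by $\|\phi\|_{L^\infty(\sigma)}\le 1\le 2p$, so nothing is lost.
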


\begin{proof}
We note that the first estimate is simply $\Norm{\ }{\ell^p}\leq\Norm{\ }{\ell^1}$; the second one is found in \cite{hanninenhytonen2014}.
\end{proof}


\begin{lemma}\label{lem:dyadicSup}Let $1<s<\infty$. Let $\sigma$ be a locally finite Borel measure.
Let
\begin{equation}\label{eq:dyadicSup}
  \psi=\sup_{Q\in\mathcal{D}}\frac{\tau_Q}{\sigma(Q)} 1_Q,
\end{equation}
where the coefficients are superadditive in the sense that
\begin{equation}\label{eq:tauSuperadditive}
  \sum_{i=1}^\infty\tau_{Q_i}\leq \tau_Q
\end{equation}
whenever $Q_i\subseteq Q$ are disjoint. Then
\begin{equation}\label{eq:dyadicSupEquiv}
  \Norm{\psi}{L^s(\sigma)}
  \eqsim\sup_{\cf}\BNorm{\sup_{F\in\cf}\frac{\tau_F}{\sigma(F)}1_F}{L^s(\sigma)}
  \eqsim\sup_{\cf}\Big(\sum_{F\in\cf}\Big[\frac{\tau_F}{\sigma(F)}\Big]^s\sigma(F)\Big)^{1/s},
\end{equation}
where the supremums are over all $\sigma$-sparse subcollections $\cf\subseteq\mathcal{D}$.
\end{lemma}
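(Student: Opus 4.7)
\medskip
\noindent\textbf{Proof proposal.}
The plan is to prove the chain
$$\Norm{\psi}{L^s(\sigma)}\;\lesssim\;\sup_{\cf}\Big(\sum_{F\in\cf}\Big[\tfrac{\tau_F}{\sigma(F)}\Big]^s\sigma(F)\Big)^{1/s}\;\lesssim\;\sup_{\cf}\BNorm{\sup_{F\in\cf}\tfrac{\tau_F}{\sigma(F)}1_F}{L^s(\sigma)}\;\le\;\Norm{\psi}{L^s(\sigma)},$$
of which only the first inequality uses the superadditivity hypothesis \eqref{eq:tauSuperadditive}; the last one is trivial since a sparse $\cf\subseteq\cd$ is a subcollection of $\cd$.

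The middle inequality is a Pythagoras-style calculation. Given a $\sigma$-sparse $\cf$, fix the associated disjoint sets $E_\cf(F)\subseteq F$ with $\sigma(E_\cf(F))\ge\tfrac12\sigma(F)$. For each $x\in E_\cf(F)$ we have $x\in F$, so the pointwise supremum over $\cf$ is at least $\tau_F/\sigma(F)$. Hence
$$\sum_{F\in\cf}\Big[\tfrac{\tau_F}{\sigma(F)}\Big]^s\sigma(F)\le 2\sum_{F\in\cf}\int_{E_\cf(F)}\Big[\sup_{F'\in\cf}\tfrac{\tau_{F'}}{\sigma(F')}1_{F'}\Big]^s\ud\sigma\le 2\BNorm{\sup_{F'\in\cf}\tfrac{\tau_{F'}}{\sigma(F')}1_{F'}}{L^s(\sigma)}^s,$$
using disjointness of the $E_\cf(F)$.

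The first, and main, inequality is obtained by a \emph{principal cube construction} adapted to the coefficients $\{\tau_Q,\sigma(Q)\}_{Q\in\cd}$. The stopping family $\cf_0$ starts from suitable top cubes and is built by the rule
$$\operatorname{ch}_{\cf_0}(F):=\{\text{maximal }Q\subsetneq F:\tau_Q/\sigma(Q)>2\,\tau_F/\sigma(F)\}.$$
By maximality, every $Q\in\cd$ with $\pi_{\cf_0}Q=F$ satisfies $\tau_Q/\sigma(Q)\le 2\tau_F/\sigma(F)$, so that pointwise
$\psi\le 2\sup_{F\in\cf_0}\tau_F/\sigma(F)\cdot 1_F$. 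Here is where \eqref{eq:tauSuperadditive} enters decisively: the selection rule gives $\sigma(F')<\tau_{F'}\sigma(F)/(2\tau_F)$ for $F'\in\operatorname{ch}_{\cf_0}(F)$, and summing over the disjoint $F'\subseteq F$ the superadditivity $\sum_{F'}\tau_{F'}\le\tau_F$ yields $\sum_{F'}\sigma(F')\le\tfrac12\sigma(F)$, i.e.\ $\cf_0$ is $\sigma$-sparse.

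The construction has a useful side benefit: for $x\in E_{\cf_0}(F)$, any strictly larger $F'\in\cf_0$ containing $x$ satisfies $\tau_{F'}/\sigma(F')<\tfrac12\tau_F/\sigma(F)$ (by the parent-child rule applied along the chain up to $F'$), while smaller $F'\in\cf_0$ cannot contain such $x$. Therefore the supremum is realized at $F$, and
$$\sup_{F\in\cf_0}\tfrac{\tau_F}{\sigma(F)}1_F=\sum_{F\in\cf_0}\tfrac{\tau_F}{\sigma(F)}1_{E_{\cf_0}(F)}\quad\sigma\text{-a.e.},$$
which is a disjoint sum. Computing the $L^s(\sigma)$ norm directly then gives
$$\Norm{\psi}{L^s(\sigma)}^s\le 2^s\sum_{F\in\cf_0}\Big[\tfrac{\tau_F}{\sigma(F)}\Big]^s\sigma(E_{\cf_0}(F))\le 2^s\sum_{F\in\cf_0}\Big[\tfrac{\tau_F}{\sigma(F)}\Big]^s\sigma(F),$$
which is bounded by the third quantity in \eqref{eq:dyadicSupEquiv} applied to the sparse collection $\cf_0$. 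The main obstacle is verifying the sparseness of the principal family, which is exactly where assumption \eqref{eq:tauSuperadditive} is needed; without it, cubes with large $\tau_Q/\sigma(Q)$ could accumulate and the stopping-time geometry would collapse.
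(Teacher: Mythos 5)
Your proof is correct and takes essentially the same approach as the paper: the same stopping family with threshold $2\,\tau_F/\sigma(F)$, sparseness deduced from the superadditivity \eqref{eq:tauSuperadditive}, and the disjoint-sets argument over $E_\cf(F)$ for the reverse bound, only with the three inequalities cycled in a different order. The one detail the paper treats more carefully is the absence of top cubes in $\mathcal{D}$ (it works with $\psi_{Q_N}=\sup_{Q\subseteq Q_N}\tau_Q/\sigma(Q)\,1_Q$ and monotone convergence, where you write ``suitable top cubes''); also your a.e.\ identity for the supremum, though true by the geometric decay of the stopping generations, is not needed, since the pointwise bound $\sup_F \le \big(\sum_F[\tau_F/\sigma(F)]^s 1_F\big)^{1/s}$ already gives the estimate you use.
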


\begin{proof}
Choosing an increasing sequence $Q_N$ that exhausts the space, it is clear that $\psi_{Q_N}:=\sup_{Q\subseteq Q_N}\tau_Q/\sigma(Q)\cdot 1_Q$ increases pointwise to $\psi$, and therefore $\Norm{\psi_{Q_N}}{L^s(\sigma)}\uparrow\Norm{\psi}{L^s(\sigma)}$ by monotone convergence. So it is enough to dominate $\psi_{Q_N}$ in place of $\psi$ in the first ``$\lesssim$''.

Let $\cf$ be the stopping cubes starting at $Q_N$ and defined by
\begin{equation*}
  \operatorname{ch}_{\cf}(F):=\Big\{\text{maximal }F'\subsetneq F:\frac{\tau_{F'}}{\sigma(F')}>2\frac{\tau_F}{\sigma(F)}\Big\}.
\end{equation*}
Then sparseness follows from the assumed superadditivity:
\begin{equation*}
  \sum_{F'\in\operatorname{ch}_{\cf}(F)}\sigma(F')
  \leq\sum_{F'\in\operatorname{ch}_{\cf}(F)}\frac{\tau_{F'}\sigma(F)}{2\tau_F}
  \leq\frac{1}{2}\sigma(F),
\end{equation*}
and $\psi_{Q_N}\leq 2\sup_{F\in\cf}\tau_F/\sigma(F)\cdot 1_F$ by the stopping condition. This shows the first ``$\lesssim$'' in \eqref{eq:dyadicSupEquiv}, and the second is immediate from $\Norm{\
}{\ell^\infty}\leq\Norm{\ }{\ell^s}$.

To see that the right-most term in \eqref{eq:dyadicSupEquiv} is bounded by the left-most one, we observe that $\tau_F/\sigma(F)\leq\inf_F\psi$ and $\sigma(F)\leq 2\sigma(E_{\cf}(F))$ by sparseness, which shows that
\begin{equation*}
  \sum_{F\in\cf}\Big[\frac{\tau_F}{\sigma(F)}\Big]^s\sigma(F)
  \leq 2\sum_{F\in\cf}\int_{E_{\cf}(F)}\psi^s\ud\sigma\leq 2\Norm{\psi}{L^s(\sigma)}^s,
\end{equation*}
and this completes the proof.
\end{proof}

We also need the following lemma.

\begin{lemma}[\cite{cov2004}, Proposition~2.2]\label{lem:dyadicSum}Let $1<s<\infty$. Let $\sigma$ be a locally finite Borel measure.
Let
\begin{equation}\label{eq:dyadicSum}
  \phi=\sum_{Q\in\mathcal{D}}\alpha_Q 1_Q,\qquad
  \phi_Q:=\sum_{\substack{Q'\subseteq Q}}\alpha_{Q'} 1_{Q'}.
\end{equation}
Then
\begin{equation*}
  \Norm{\phi}{L^s(\sigma)}
  \eqsim\Big(\sum_{Q\in\mathcal{D}}\alpha_Q(\ave{\phi_Q}_Q^\sigma)^{s-1}\sigma(Q)\Big)^{1/s}
  \eqsim\BNorm{\sup_{Q\in\mathcal{D}}1_Q\ave{\phi_Q}_Q^\sigma}{L^s(\sigma)}.
\end{equation*}
\end{lemma}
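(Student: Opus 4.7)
The plan is to prove the three quantities pairwise equivalent, using the tools already developed in this section. Let $g_\psi(x) := \sup_{Q\ni x}\langle\phi_Q\rangle_Q^\sigma$ denote the maximal envelope appearing in the third quantity, and work in the chain $\Norm{\phi}{L^s(\sigma)}\eqsim\Norm{g_\psi}{L^s(\sigma)}\eqsim\bigl(\sum_Q\alpha_Q(\langle\phi_Q\rangle_Q^\sigma)^{s-1}\sigma(Q)\bigr)^{1/s}$.

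For $\Norm{g_\psi}{L^s(\sigma)}\lesssim\Norm{\phi}{L^s(\sigma)}$, the pointwise bound $\langle\phi_Q\rangle_Q^\sigma\leq\langle\phi\rangle_Q^\sigma\leq M_\cd^\sigma\phi$ on $Q$ (using $\phi_Q\leq\phi$) combined with the $L^s(\sigma)$-boundedness of the maximal operator suffices. For the converse, construct the $\sigma$-sparse stopping family
\[
\ch_\cf(F) := \{\text{maximal } F'\subsetneq F : \langle\phi_{F'}\rangle_{F'}^\sigma > 2\langle\phi_F\rangle_F^\sigma\},
\]
whose sparseness follows from $\sum_{F'\in\ch_\cf(F)}\langle\phi_{F'}\rangle_{F'}^\sigma\sigma(F')\leq\int_F\phi_F\,\ud\sigma = \sigma(F)\langle\phi_F\rangle_F^\sigma$ together with the stopping threshold. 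Decompose $\phi = \sum_F b_F$ with $b_F := \sum_{Q:\pi_\cf Q = F}\alpha_Q 1_Q$; each $b_F$ is supported in $F$ and constant on each $F'\in\ch_\cf(F)$ (since $Q\ni x\in F'$ with $\pi_\cf Q = F$ forces $F\supseteq Q\supsetneq F'$), so Lemma~\ref{lem:pythagoras} yields $\Norm{\phi}{L^s(\sigma)}^s\lesssim_s\sum_F\Norm{b_F}{L^s(\sigma)}^s$, and the stopping rule then allows one to bound $\Norm{b_F}{L^s(\sigma)}^s\lesssim(\langle\phi_F\rangle_F^\sigma)^s\sigma(F)$. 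Summation via Lemma~\ref{lem:dyadicSup} applied to the superadditive $\tau_Q := \int\phi_Q\,\ud\sigma$ then closes this side.

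For the equivalence with the Wolff sum, the key input is the pointwise telescoping obtained by listing the dyadic cubes containing $x$ as $Q_1(x)\subsetneq Q_2(x)\subsetneq\cdots$, setting $T_k := \sum_{j\leq k}\alpha_{Q_j(x)} = \phi_{Q_k(x)}(x)$, and applying $T_k^s - T_{k-1}^s\leq s(T_k-T_{k-1})T_k^{s-1}$ (valid for $s\geq 1$ by convexity of $t\mapsto t^s$):
\[
\phi(x)^s\leq s\sum_{Q\ni x}\alpha_Q\phi_Q(x)^{s-1}.
\]
Integrating against $\ud\sigma$ and, in the range $s\leq 2$, applying Jensen's inequality (concavity of $t\mapsto t^{s-1}$) bounds $\Norm{\phi}{L^s(\sigma)}^s$ by a multiple of $\sum_Q\alpha_Q(\langle\phi_Q\rangle_Q^\sigma)^{s-1}\sigma(Q)$. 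For the reverse inequality, the pointwise domination $\langle\phi_Q\rangle_Q^\sigma\leq g_\psi(x)$ for $x\in Q$, combined with H\"older's inequality and the first equivalence, gives
\[
\sum_Q\alpha_Q(\langle\phi_Q\rangle_Q^\sigma)^{s-1}\sigma(Q)\leq\int\phi\cdot g_\psi^{s-1}\,\ud\sigma\leq\Norm{\phi}{L^s(\sigma)}\Norm{g_\psi}{L^s(\sigma)}^{s-1}\lesssim\Norm{\phi}{L^s(\sigma)}^s.
\]

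The main obstacle is the forward bound of the Wolff-sum equivalence when $s>2$, since then the pointwise replacement $\int_Q\phi_Q^{s-1}\,\ud\sigma\leq\sigma(Q)(\langle\phi_Q\rangle_Q^\sigma)^{s-1}$ fails by reverse Jensen. I would bypass this by using the already-established equivalence $\Norm{\phi}{L^s(\sigma)}^s\eqsim\sup_\cf\sum_{F\in\cf}(\langle\phi_F\rangle_F^\sigma)^s\sigma(F)$ (from Lemma~\ref{lem:dyadicSup} applied to the superadditive $\tau_Q$), and then matching the sparse-sum form with the Wolff sum via the superadditivity identity $\alpha_Q\sigma(Q) = \tau_Q - \sum_{Q'\in\ch_\cd(Q)}\tau_{Q'}$ together with an Abel-type summation along the sparse tree---the route followed in \cite{cov2004}.
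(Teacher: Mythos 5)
The paper itself gives no proof of this lemma (it is quoted from \cite{cov2004}, Proposition~2.2), so your attempt has to stand on its own; several pieces of it do. The bounds $\Norm{\sup_Q 1_Q\ave{\phi_Q}_Q^\sigma}{L^s(\sigma)}\lesssim\Norm{\phi}{L^s(\sigma)}$ via $M_\cd^\sigma$, the bound $\sum_Q\alpha_Q(\ave{\phi_Q}_Q^\sigma)^{s-1}\sigma(Q)\le\int\phi\,g_\psi^{s-1}\ud\sigma\le\Norm{\phi}{L^s(\sigma)}\Norm{g_\psi}{L^s(\sigma)}^{s-1}$, and the telescoping-plus-Jensen proof of $\Norm{\phi}{L^s(\sigma)}^s\lesssim_s\sum_Q\alpha_Q(\ave{\phi_Q}_Q^\sigma)^{s-1}\sigma(Q)$ for $1<s\le 2$ are all correct, as are the stopping construction, its sparseness, and the applicability of Lemma~\ref{lem:pythagoras} to $b_F$. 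The first genuine gap is the step ``the stopping rule then allows one to bound $\Norm{b_F}{L^s(\sigma)}^s\lesssim(\ave{\phi_F}_F^\sigma)^s\sigma(F)$'': the stopping rule only gives $\ave{\phi_Q}_Q^\sigma\le 2\ave{\phi_F}_F^\sigma$ for the cubes with $\pi_\cf Q=F$, and $b_F$ is \emph{not} pointwise $\lesssim\ave{\phi_F}_F^\sigma$ (take $\sigma$ Lebesgue on $[0,1)$, $\alpha_{[0,2^{-j})}=1$ for $j=0,\dots,N$: there are no stopping children, $\ave{\phi_F}_F^\sigma\approx 2$, yet $b_F\approx N$ near the origin). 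The claim is true, but because the coefficients $\{\alpha_Q\sigma(Q)\}_{\pi_\cf Q=F}$ form a Carleson sequence with constant $2\ave{\phi_F}_F^\sigma$, and one then needs the full dyadic Carleson embedding theorem (by duality) or a John--Nirenberg-type level-set iteration; Lemma~\ref{lm:carleson} as stated covers only sparse families, so this step requires an argument you have not supplied.

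The larger gap is the direction $\Norm{\phi}{L^s(\sigma)}^s\lesssim\sum_Q\alpha_Q(\ave{\phi_Q}_Q^\sigma)^{s-1}\sigma(Q)$ for $s>2$, which you explicitly defer to ``an Abel-type summation along the sparse tree---the route followed in \cite{cov2004}''. That is precisely the substantive content of the cited proposition, and the sketch does not close as written: after reducing to $\sum_{F\in\cf}(\ave{\phi_F}_F^\sigma)^s\sigma(F)$ for a $\sigma$-sparse $\cf$, writing this as $\sum_{F\in\cf}(\ave{\phi_F}_F^\sigma)^{s-1}\sum_{Q\subseteq F}\alpha_Q\sigma(Q)$ and exchanging sums leaves, for each $Q$, the factor $\sum_{F\in\cf,\,F\supseteq Q}(\ave{\phi_F}_F^\sigma)^{s-1}$, which admits no bound by $C(\ave{\phi_Q}_Q^\sigma)^{s-1}$, since a sparse family may contain an arbitrarily long chain of ancestors of $Q$ along which the averages stay comparable. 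The identity $\alpha_Q\sigma(Q)=\tau_Q-\sum_{Q'\in\ch(Q)}\tau_{Q'}$ does not rescue this by itself either: the mass $\tau_F$ can sit entirely below the stopping children, so $\sum_{\pi_\cf Q=F}\alpha_Q\sigma(Q)$ may be a vanishing fraction of $\tau_F$. As it stands, then, your argument proves the lemma only for $1<s\le 2$ (modulo the Carleson-embedding point above); for $s>2$ the key inequality is asserted, not proved, and pushing it back to \cite{cov2004} defeats the purpose of a self-contained proof.
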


\section{Two-weight $T1$ theorem for positive operators: the linear case}\label{sec:T1}

In this section we prove Theorem~\ref{thm:seqTesting} and Proposition~\ref{prop:abstractWolff} and \ref{prop:absVsConcrete} discussed in the Introduction.

\subsection{Necessity of sequential testing}

We begin with a more general version of ``sequential boundedness'' of positive linear operators between $L^p$ spaces.

\begin{proposition}\label{prop:nec}
Let $p,q\in(1,\infty)$, and $T(\,\cdot\,\sigma):L^p(\sigma)\to L^q(\omega)$ be a positive bounded linear operator. Then for $r$ defined in \eqref{eq:rDef} and all nonnegative functions $\phi_F$, we have
\begin{equation*}
\begin{split}
  &\BNorm{\Big\{\Norm{T(\phi_F\sigma)}{L^q(\omega)}\Big\}_{F\in\cf}}{\ell^r}\\
  &\leq \Norm{T(\,\cdot\,\sigma)}{L^p(\sigma)\to L^q(\omega)}\sup\Big\{\BNorm{\sum_F \beta_F\phi_F}{L^p(\sigma)}:\sum_F \beta_F^p\leq 1\Big\}.
\end{split}
\end{equation*}
\end{proposition}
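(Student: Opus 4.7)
The idea is to use positivity and linearity of $T$ to reduce the desired $\ell^r$-bound to a scalar $\ell^q$-sum indexed by $\cf$, and then to extract the $\ell^r$-norm by an $\ell^{p/q}$-duality after a simple change of variables.

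The first step is to observe that, by positivity and linearity of $T$,
\begin{equation*}
  T\Big(\sum_{F\in\cf}\beta_F\phi_F\sigma\Big)=\sum_{F\in\cf}\beta_F\,T(\phi_F\sigma)\qquad\text{for all }\beta_F\geq 0,
\end{equation*}
so that the operator-norm bound immediately gives
\begin{equation*}
  \BNorm{\sum_{F\in\cf}\beta_F T(\phi_F\sigma)}{L^q(\omega)}\leq \Norm{T(\,\cdot\,\sigma)}{L^p(\sigma)\to L^q(\omega)}\BNorm{\sum_{F\in\cf}\beta_F\phi_F}{L^p(\sigma)}.
\end{equation*}
In the easy case $p\leq q$ (so $r=\infty$), specializing $\beta_{F_0}=1$ with the remaining coefficients zero already yields $\Norm{T(\phi_{F_0}\sigma)}{L^q(\omega)}\leq B$ for every $F_0\in\cf$, where $B$ denotes the right-hand side of the proposition; taking the supremum in $F_0$ finishes this case.

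For $p>q$, the next step is the elementary pointwise inequality $\sum_F a_F^q\leq\big(\sum_F a_F\big)^q$, valid for nonnegative $a_F$ and $q\geq 1$ (it is just $\Norm{\cdot}{\ell^q}\leq\Norm{\cdot}{\ell^1}$ raised to the $q$-th power). Applying this with $a_F=\beta_F T(\phi_F\sigma)(x)$ and integrating against $\omega$ gives
\begin{equation*}
  \Big(\sum_{F\in\cf}\beta_F^q\Norm{T(\phi_F\sigma)}{L^q(\omega)}^q\Big)^{1/q}\leq \BNorm{\sum_{F\in\cf}\beta_F T(\phi_F\sigma)}{L^q(\omega)}.
\end{equation*}
Chaining this with the previous display and taking the supremum over all $\beta_F\geq 0$ with $\sum_F\beta_F^p\leq 1$ produces $\sup_{\beta}\sum_F\beta_F^q\Norm{T(\phi_F\sigma)}{L^q(\omega)}^q\leq B^q$. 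Setting $\alpha_F:=\beta_F^q$ converts the admissible set to $\{\alpha_F\geq 0:\sum_F\alpha_F^{p/q}\leq 1\}$, and by $\ell^{p/q}$--$\ell^{(p/q)'}$ duality the supremum on the left equals the $\ell^{(p/q)'}$-norm of the sequence $\big(\Norm{T(\phi_F\sigma)}{L^q(\omega)}^q\big)_{F\in\cf}$. Since $p>q$ and $r=pq/(p-q)$ one has $(p/q)'=r/q$, so this is precisely $\Norm{\{\Norm{T(\phi_F\sigma)}{L^q(\omega)}\}_{F\in\cf}}{\ell^r}^q\leq B^q$, which is the asserted inequality.

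I do not anticipate any serious obstacle; the only non-mechanical ingredient beyond positivity and linearity is the pointwise $\ell^q$-versus-$\ell^1$ inequality, which is what converts the $L^q$-norm of the superposition $\sum_F\beta_F T(\phi_F\sigma)$ into a scalar $\ell^q$-sum amenable to duality. It is exactly this step that is responsible for the ``extra'' $\ell^r$-summability that only becomes visible in the inhomogeneous range $p>q$; in the range $p\leq q$ the same mechanism degenerates to the trivial bound on a single term.
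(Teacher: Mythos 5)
Your proof is correct and follows essentially the same route as the paper: both rest on the pointwise inequality $\sum_F a_F^q\le(\sum_F a_F)^q$ combined with positivity/linearity of $T$ and the $\ell^{p/q}$--$\ell^{r/q}$ duality (the paper applies the duality first and the pointwise inequality second, you do it in the reverse order, which is immaterial). The treatment of the trivial case $p\le q$ also matches the paper's remark that the statement is obvious when $r=\infty$.
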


\begin{proof}
We first note that this is obvious for $p\leq q$, when $r=\infty$, so we concentrate on $p>q$.
We rewrite the left side by duality and observe that $(r/q)'=p/q$:
\begin{equation*}
\begin{split}
  \Big[\sum_F & \Norm{T(\phi_F\sigma)}{L^q(\omega)}^r\Big]^{1/r}
    =\Big[\sum_F \Big(\Norm{T(\phi_F\sigma)}{L^q(\omega)}^q\Big)^{r/q}\Big]^{1/r} \\
  & =\sup\Big\{\Big[\sum_F \alpha_F \Norm{T(\phi_F\sigma)}{L^q(\omega)}^q\Big]^{1/q} : \sum_F\alpha_F^{(r/q)'}\leq 1\Big\} \\
  & =\sup\Big\{\Big[\sum_F  \Norm{T(\alpha_F^{1/q}\phi_F\sigma)}{L^q(\omega)}^q\Big]^{1/q} : \sum_F\alpha_F^{p/q}\leq 1\Big\},
\end{split}
\end{equation*}
Denoting $\beta_F:=\alpha_F^{1/q}$, where these numbers satisfy $\sum_F\beta_F^p\leq 1$, we continue with
\begin{equation*}
\begin{split}
  \sum_F &\Norm{T(\beta_F\phi_F\sigma)}{L^q(\omega)}^q
  =\int \sum_F\ [T(\beta_F \phi_F\sigma)]^q\ud\omega \\
  &\leq\int\Big[\sum_F T(\beta_F\phi_F\sigma)\Big]^q\ud\omega
  =\BNorm{T\Big(\sum_F\beta_F\phi_F\sigma\Big)}{L^q(\omega)}^q \\
  &\leq\Norm{T(\,\cdot\,\sigma)}{L^p(\sigma)\to L^q(\omega)}^q\BNorm{\sum_F\beta_F\phi_F}{L^p(\sigma)}^q.\qedhere
\end{split}
\end{equation*}
\end{proof}

The necessity of sequential testing in Theorem~\ref{thm:seqTesting} is a consequence of the following corollary and the trivial pointwise bound $T_F(\sigma)\leq T(1_F\sigma)$:

\begin{corollary}\label{cor:necessityseq}
Let $p,q\in(1,\infty)$ and $T(\,\cdot\,\sigma):L^p(\sigma)\to L^q(\omega)$ be a positive bounded linear operator. Then for $r$ defined in \eqref{eq:rDef} and any sparse collection $\cf$ with respect to $\sigma$, we have
\begin{equation*}
  \BNorm{\Big\{\frac{1}{\sigma(F)^{1/p}}\Norm{T(1_F\sigma)}{L^q(\omega)}\Big\}_{F\in\cf}}{\ell^r}
  \leq 3p\cdot \Norm{T(\,\cdot\,\sigma)}{L^p(\sigma)\to L^q(\omega)}.
\end{equation*}
\end{corollary}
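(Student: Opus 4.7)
The plan is to derive Corollary~\ref{cor:necessityseq} as a direct specialization of Proposition~\ref{prop:nec} combined with Lemma~\ref{lem:pythagoras}. The key observation is that normalizing the indicator $1_F$ by $\sigma(F)^{1/p}$ inside Proposition~\ref{prop:nec} produces exactly the left-hand side of the corollary, while $\sigma$-sparseness of $\cf$ is precisely what one needs to control the resulting test quantity on the right-hand side.

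Concretely, I would apply Proposition~\ref{prop:nec} with the choice
\[
  \phi_F := \sigma(F)^{-1/p}\,1_F, \qquad F\in\cf,
\]
so that $\Norm{T(\phi_F\sigma)}{L^q(\omega)}=\sigma(F)^{-1/p}\Norm{T(1_F\sigma)}{L^q(\omega)}$. The proposition then yields
\[
  \BNorm{\Big\{\frac{\Norm{T(1_F\sigma)}{L^q(\omega)}}{\sigma(F)^{1/p}}\Big\}_{F\in\cf}}{\ell^r}
  \leq \Norm{T(\,\cdot\,\sigma)}{L^p(\sigma)\to L^q(\omega)}\cdot \mathcal{S},
\]
where
\[
  \mathcal{S}:=\sup\Big\{\BNorm{\sum_{F\in\cf}\beta_F\phi_F}{L^p(\sigma)} : \sum_{F\in\cf}\beta_F^p\leq 1\Big\}.
\]
So the whole task reduces to showing $\mathcal{S}\leq 3p$.

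To bound $\mathcal{S}$, I would apply Lemma~\ref{lem:pythagoras} to the functions $a_F:=\beta_F\phi_F=\beta_F\sigma(F)^{-1/p}1_F$. Each $a_F$ is non-negative, supported on $F$, and (being a constant multiple of $1_F$) constant on all of $F$, hence in particular constant on every $F'\in\operatorname{ch}_{\cf}(F)$. The sparse Pythagoras lemma therefore gives
\[
  \BNorm{\sum_{F\in\cf}\beta_F\phi_F}{L^p(\sigma)}
  \leq 3p\Big(\sum_{F\in\cf}\Norm{\beta_F\phi_F}{L^p(\sigma)}^p\Big)^{1/p}
  = 3p\Big(\sum_{F\in\cf}\beta_F^p\Big)^{1/p}\leq 3p,
\]
since $\Norm{\phi_F}{L^p(\sigma)}^p=\sigma(F)^{-1}\sigma(F)=1$. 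Combining with the previous display completes the proof.

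There is essentially no obstacle: the only conceptual point is recognizing that the normalized indicators $\phi_F=\sigma(F)^{-1/p}1_F$ are the correct test functions, and that their sparse superposition is controlled by the $\ell^p$-norm of the coefficients via Lemma~\ref{lem:pythagoras}. The constant $3p$ in the conclusion is inherited directly from that lemma, and the role of $\sigma$-sparseness is confined to validating its hypothesis.
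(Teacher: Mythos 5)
Your proposal is correct and coincides with the paper's own argument: the paper also applies Proposition~\ref{prop:nec} with $\phi_F=1_F/\sigma(F)^{1/p}$ and then invokes Lemma~\ref{lem:pythagoras} to bound the resulting supremum by $3p$. Your extra remark verifying the hypothesis of Lemma~\ref{lem:pythagoras} (each $a_F$ is constant on $F$, hence on its $\cf$-children) is a correct, if implicit, step in the paper's proof as well.
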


\begin{proof}
Applying to Proposition~\ref{prop:nec} with $\phi_F=1_F/\sigma(F)^{1/p}$, we are reduced to estimating
\begin{equation*}
\begin{split}
  &\BNorm{\sum_F\frac{\beta_F}{\sigma(F)^{1/p}}1_F}{L^p(\sigma)}\qquad\text{where }\sum_F\beta_F^p\leq 1 \\
  &\leq 3p\Big(\sum_F\frac{\beta_F^p}{\sigma(F)}\sigma(F)\Big)^{1/p}\qquad\text{by Lemma \ref{lem:pythagoras}} \\
  &=3p\Big(\sum_F\beta_F^p\Big)^{1/p}\leq 3p.\qedhere
\end{split}
\end{equation*}
\end{proof}

\subsection{Sufficiency of sequential testing}

We turn to the other direction of Theorem~\ref{thm:seqTesting} by showing the boundedness of $T(\,\cdot\,\sigma)$ under the sequential testing assumptions. The argument is rather short once we take for granted the following lemma, which summarizes the part of the argument common to both previous Theorems~\ref{thm:LSU} (as proven in \cite{hytonen2012a}) and \ref{thm:Tanaka}:

\begin{lemma}\label{lem:linearCore}
Let $T$ and $T_Q$ be as in \eqref{eq:TandTQ}.
For any $Q_0\in\mathcal{D}$, there are subsets $\cf,\mathcal{G}\subseteq\mathcal{D}$, respectively sparse with respect to $\sigma,\omega$, such that
\begin{equation}\label{eq:Tdecompo}
\begin{split}
  \pair{T_{Q_0}(f\sigma)}{g\omega}
  &\leq 2\sum_{F\in\cf}\ave{f}_F^\sigma\Norm{T_F(\sigma)}{L^q(\omega)}\Norm{g_F}{L^{q'}(\omega)} \\
   &\qquad +2\sum_{G\in\mathcal{G}}\ave{g}_G^\omega\Norm{f_F}{L^p(\sigma)}\Norm{T_F(\omega)}{L^{q'}(\omega)},
\end{split}
\end{equation}
where
\begin{equation*}
\begin{split}
  \Big(\sum_{F\in\cf}(\ave{f}_F^\sigma)^p\sigma(F)\Big)^{1/p}\lesssim\Norm{f}{L^p(\sigma)},\qquad
  \Big(\sum_{F\in\cf}\Norm{g_F}{L^{q'}(\omega)}^{q'}\Big)^{1/q'}\lesssim\Norm{g}{L^{q'}(\omega)}, \\
  \Big(\sum_{G\in\mathcal{G}}(\ave{g}_G^\omega)^{q'}\omega(G)\Big)^{1/q'}\lesssim\Norm{g}{L^{q'}(\omega)},\qquad
  \Big(\sum_{G\in\mathcal{G}}\Norm{f_G}{L^{p}(\sigma)}^{p}\Big)^{1/p}\lesssim\Norm{f}{L^{p}(\sigma)}.
\end{split}
\end{equation*}
\end{lemma}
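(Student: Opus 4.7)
The plan is to build two standard principal stopping families. Let $\cf$ begin at $Q_0$ with $\operatorname{ch}_\cf(F) := \{\text{maximal } F' \subsetneq F : \ave{f}_{F'}^\sigma > 2\ave{f}_F^\sigma\}$, and define $\mathcal{G}$ analogously from $g$ and $\omega$. The stopping inequality immediately gives $\sigma$-sparseness of $\cf$ (resp.\ $\omega$-sparseness of $\mathcal{G}$), while by construction $\ave{f}_Q^\sigma \le 2\ave{f}_{\pi_\cf Q}^\sigma$ for every $Q \subseteq Q_0$, and analogously for $g$. The Carleson-type bounds on $\ave{f}_F^\sigma$ and $\ave{g}_G^\omega$ then follow directly from Lemma~\ref{lm:carleson}.

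Expanding
\[
  \pair{T_{Q_0}(f\sigma)}{g\omega} = \sum_{Q \subseteq Q_0} \lambda_Q \int_Q f\,\ud\sigma \int_Q g\,\ud\omega,
\]
split the $Q$-sum according to whether $\pi_\cf Q \subseteq \pi_\mathcal{G} Q$ (call this Case A) or $\pi_\mathcal{G} Q \subsetneq \pi_\cf Q$ (Case B). In Case A the bound $\int_Q f\,\ud\sigma \le 2\ave{f}_F^\sigma \sigma(Q)$ with $F := \pi_\cf Q$, grouped by $F$, yields an upper estimate of $2\sum_{F \in \cf} \ave{f}_F^\sigma \int \phi_F\, g\,\ud\omega$, where
\[
  \phi_F := \sum_{\substack{Q : \pi_\cf Q = F \\ \pi_\mathcal{G} Q \supseteq F}} \lambda_Q \sigma(Q)\, 1_Q
\]
satisfies $0 \le \phi_F \le T_F(\sigma)$ pointwise and is supported in $F$. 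Take $g_F := g \cdot 1_{E^*(F)}$, where $E^*(F) := E_\cf(F) \setminus \bigcup \{G' \in \mathcal{G} : G' \subsetneq F\}$; these sets are pairwise disjoint across $F \in \cf$, which immediately gives $\sum_F \Norm{g_F}{L^{q'}(\omega)}^{q'} \le \Norm{g}{L^{q'}(\omega)}^{q'}$. A direct check using $x \in E^*(F)$ shows every $Q \ni x$ with $Q \subseteq F$ lies in the sum defining $\phi_F$, so $\phi_F = T_F(\sigma)$ on $E^*(F)$. H\"older then produces the principal contribution
\[
  \int_{E^*(F)} \phi_F\, g\,\ud\omega \le \Norm{T_F(\sigma)}{L^q(\omega)} \Norm{g_F}{L^{q'}(\omega)},
\]
which is the first sum of the claimed decomposition. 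Case B is handled symmetrically with $f_G$ defined analogously on the principal region associated with $G \in \mathcal{G}$.

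The hard part will be absorbing the residual $\int_{F \setminus E^*(F)} \phi_F\, g\,\ud\omega$: the set $F \setminus E^*(F)$ is partitioned by the maximal cubes $K$ in $\operatorname{ch}_\cf(F) \cup \{G' \in \mathcal{G} : G' \subsetneq F\}$, on each of which $\phi_F$ takes a constant value $c_K \le T_F(\sigma)|_K$. The plan is to re-route these pieces: contributions from $K \in \operatorname{ch}_\cf(F)$ are absorbed by iterating the same analysis at the next $\cf$-descendant level, while contributions from $K \in \mathcal{G}$, after applying the $\mathcal{G}$-stopping bound $\ave{g}_{R}^\omega \le 2\ave{g}_{\pi_\mathcal{G} R}^\omega$, are folded into the symmetric Case B sum keyed by the $\mathcal{G}$-cube $K$. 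Closing this bookkeeping so that the two cases assemble into exactly the two-sum form of the lemma, without losing the implicit constants, is the core technical step.
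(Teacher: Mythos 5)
There is a genuine gap, and it sits exactly where you placed it: the residual term $\int_{F\setminus E^*(F)}\phi_F\,g\,\ud\omega$ is the whole difficulty of the lemma, and your proposal only sketches a plan for it — a plan that, as described, does not close. Concretely: for a maximal $K\in\mathcal{G}$ with $K\subsetneq F$, the residual contribution is $\ave{f}_F^\sigma\, c_K\int_K g\,\ud\omega$ with $c_K=\sum\{\lambda_Q\sigma(Q): K\subsetneq Q\subseteq F,\ \pi_{\cf}Q=F,\ \pi_{\mathcal{G}}Q\supseteq F\}$, i.e.\ it is built from cubes \emph{strictly containing} $K$; the Case~B term keyed by $K$ contains $\Norm{T_K(\omega)}{L^{p'}(\sigma)}$, which only sees cubes \emph{inside} $K$, so there is no way to ``fold'' the former into the latter. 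Likewise, ``iterating at the next $\cf$-descendant level'' for $K\in\operatorname{ch}_{\cf}(F)$ is a genuine induction in which the carried constants $c_K$ accumulate down the tree; making this converge requires an absorption scheme with geometric decay that you have not set up. The degenerate example $g\equiv 1$ (so $\mathcal{G}=\{Q_0\}$ and \emph{every} cube falls into your Case A) shows the routing itself is problematic: there the full pairing $\int T_{Q_0}(f\sigma)\,\ud\omega$ must be absorbed into the $\mathcal{G}$-keyed sum (it is exactly $\ave{g}^\omega_{Q_0}\Norm{f1_{Q_0}}{L^p(\sigma)}\Norm{T_{Q_0}(\omega)}{L^{p'}(\sigma)}$ up to H\"older), whereas your scheme tries to push all of it into the $\cf$-keyed sum with disjointly supported $g_F$, which cannot work in general.

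The standard (and the paper's implicit) mechanism routes the two halves the other way around, and gets quasi-orthogonality from the Carleson embedding rather than from disjoint supports. Split by $\pi(Q)=(\pi_\cf Q,\pi_{\mathcal{G}}Q)=(F,G)$; if $F\subseteq G$ then necessarily $G=\pi_{\mathcal{G}}F$, and this half is keyed by $G$: bounding $\int_Q g\,\ud\omega\le 2\ave{g}^\omega_G\omega(Q)$ and $\int_Q f\,\ud\sigma\le 2\ave{f}^\sigma_{F}\sigma(Q)$, one dominates, for fixed $G$, the sum over $F$ with $\pi_{\mathcal{G}}F=G$ by $\int T_G(\omega)\,\big(\sup_{F:\pi_{\mathcal{G}}F=G}\ave{f}^\sigma_F 1_F\big)\ud\sigma\le\Norm{T_G(\omega)}{L^{p'}(\sigma)}\Norm{f_G}{L^p(\sigma)}$ with $f_G:=\sup_{F:\pi_{\mathcal{G}}F=G}\ave{f}^\sigma_F1_F$; since the sets $\{F\in\cf:\pi_{\mathcal{G}}F=G\}$ partition $\cf$, $\sum_G\Norm{f_G}{L^p(\sigma)}^p\le\sum_{F\in\cf}(\ave{f}^\sigma_F)^p\sigma(F)\lesssim\Norm{f}{L^p(\sigma)}^p$ by Lemma~\ref{lm:carleson}. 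The half with $G\subsetneq F$ (then $F=\pi_\cf G$) is symmetric, producing the $\cf$-keyed sum with $g_F:=\sup_{G:\pi_\cf G=F}\ave{g}^\omega_G1_G$. This is precisely the linear specialization of the bilinear sufficiency argument in Section~\ref{subsec:suf-sequential}; the paper itself proves the lemma only by citing \cite{hytonen2012a}. Your setup (principal cubes, sparseness, the Carleson bounds, and the identity $\phi_F=T_F(\sigma)$ on $E^*(F)$) is correct as far as it goes, but the decomposition \eqref{eq:Tdecompo} is not established.
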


\begin{proof}
This is implicitly contained as an intermediate step in the proof of \cite[Theorem 6.1]{hytonen2012a}.
\end{proof}

We observe that the number $r$ is chosen in such a way that
\begin{equation*}
  \frac{1}{p}+\frac{1}{r}+\frac{1}{q'}=\frac{1}{p}+\Big(\frac{1}{q}-\frac{1}{p}\Big)_+ +\frac{1}{q'}\geq 1
\end{equation*}
in each case. Thus, using H\"older's inequality (and the monotonicity of the $\ell^s$ norms in the case of strict inequality above), we have
\begin{equation*}
\begin{split}
  \sum_{F\in\cf} &\ave{f}_F^\sigma\Norm{T_F(\sigma)}{L^q(\omega)}\Norm{g_F}{L^{q'}(\omega)} \\
  &=\sum_{F\in\cf}\ave{f}_F^\sigma\sigma(F)^{1/p}\frac{\Norm{T_F(\sigma)}{L^q(\omega)}}{\sigma(F)^{1/p}}\Norm{g_F}{L^{q'}(\omega)} \\
  &\leq \Big(\sum_{F\in\cf}(\ave{f}_F^\sigma)^p\sigma(F)\Big)^{1/p}
  \BNorm{ \Big\{ \frac{\Norm{T_F(\sigma)}{L^q(\omega)}}{\sigma(F)^{1/p}} \Big\}_{F\in\cf} }{\ell^r}
  \Big(\sum_{F\in\cf}\Norm{g_F}{L^{q'}(\omega)}^{q'}\Big)^{1/q'} \\
  &\lesssim\Norm{f}{L^p(\sigma)}\cdot\mathfrak{T}_r\cdot\Norm{g}{L^{q'}(\omega)}.
\end{split}
\end{equation*}
A symmetric argument for the second term in \eqref{eq:Tdecompo} shows that
\begin{equation*}
  \pair{T_{Q_0}(f\sigma)}{g\omega}\lesssim(\mathfrak{T}_r+\mathfrak{T}_r^*)\Norm{f}{L^p(\sigma)}\Norm{g}{L^{q'}(\omega)},
\end{equation*}
and taking $Q_0$ as large as we like it follows by monotone convergence and duality that
\begin{equation*}
   \Norm{T(f\sigma)}{L^q(\omega)}\lesssim (\mathfrak{T}_r+\mathfrak{T}_r^*)\Norm{f}{L^p(\sigma)}.
\end{equation*}
This completes the proof of Theorem~\ref{thm:seqTesting}.

\subsection{Abstract Wolff potential}\label{subsec:abstractwolffpotential}

We prove Propositions~\ref{prop:abstractWolff} and \ref{prop:absVsConcrete} concerning the abstract Wolff potential $W_{T,\sigma}^q$ and its relation to the discrete Wolff potential~$W_{\lambda,\sigma}^q$.

\begin{proof}[Proof of Proposition~\ref{prop:abstractWolff}]
Recall that
\begin{equation*}
  \mathfrak{T}_r
  :=\sup_{\cf}\BNorm{\Big\{\frac{\Norm{T_F(\sigma)}{L^q(\omega)}}{\sigma(F)^{1/p}}\}_{F\in\cf}}{\ell^r}
  =\sup_{\cf}\Big(\sum_{F\in\cf}\Big[\frac{\Norm{T_F(\sigma)}{L^q(\omega)}^q}{\sigma(F)}\Big]^{r/q}\sigma(F)\Big)^{1/r}.
\end{equation*}
We denote $\tau_Q:=\Norm{T_Q(\sigma)}{L^q(\omega)}^q$. Once we check that these satisfy the superadditivity \eqref{eq:tauSuperadditive}, Proposition~\ref{prop:abstractWolff} is a direct consequence of Lemma~\ref{lem:dyadicSup}.

In the first case of $T_Q=T(1_Q\cdot)$, we have
\begin{equation*}
\begin{split}
  \sum_i\Norm{T(1_{Q_i}\sigma)}{L^q(\omega)}^q
  &=\int \sum_i(T(1_{Q_i}\sigma))^q\ud\omega
  \leq \int\Big[ \sum_i T(1_{Q_i}\sigma)\Big]^q\ud\omega \\
  &=\int\Big[T\Big(\sum_i 1_{Q_i}\sigma\Big)\Big]^q\ud\omega
  \leq\int[T(1_{Q}\sigma)]^q\ud\omega=\Norm{T(1_Q\sigma)}{L^q(\omega)}^q.
\end{split}
\end{equation*}
The other two cases follow by first estimating $T_{Q_i}(\sigma)\leq T_{Q}(1_{Q_i}\sigma)$ and then applying the first case to the operator $T_Q$ in place of $T$.
\end{proof}

\begin{proof}[Proof of Proposition~\ref{prop:absVsConcrete}]
We need to show that the $L^{r/q}(\sigma)$ norms of
\begin{equation*}
  W_{T,\sigma}^q[\omega]:=\sup_{Q\in\mathcal{D}}\frac{1_Q}{\sigma(Q)}\Norm{T_Q(\sigma)}{L^q(\omega)}^q
  =:\sup_{Q\in\mathcal{D}}\frac{1_Q}{\sigma(Q)}\tau_Q
\end{equation*}
and
\begin{equation*}
  W_{\lambda,\omega}^q[\sigma]
  :=\sum_{Q\in\mathcal{D}}\lambda_Q\omega(Q)1_Q\Big(\frac{1}{\omega(Q)}\sum_{Q'\subseteq Q}\lambda_{Q'}\sigma(Q')\omega(Q')\Big)^{q-1}
    =:\sum_{Q\in\mathcal{D}}\alpha_Q 1_Q
\end{equation*}
are comparable.

From Lemma~\ref{lem:dyadicSum} we deduce that
\begin{equation}\label{eq:WolffObs}
\begin{split}
  \tau_R
  &=\int_R\Big(\sum_{Q\subseteq R}\lambda_Q\sigma(Q)1_Q\Big)^q\ud\omega \\
  &\eqsim \sum_{Q\subseteq R}\lambda_Q\sigma(Q)\omega(Q)\Big(\frac{1}{\omega(Q)}\sum_{Q'\subseteq Q}\lambda_{Q'}\sigma(Q')\omega(Q')\Big)^{q-1} \\
  &=\int_R \Big(\sum_{Q\subseteq R}\alpha_Q 1_Q\Big)\ud\sigma.
\end{split}
\end{equation}
Hence
\begin{equation*}
\begin{split}
  \Norm{W_{T,\sigma}^q[\omega]}{L^{r/q}(\sigma)}
  &=\BNorm{\sup_{R\in\mathcal{D}}\frac{1_R}{\sigma(R)}\tau_R}{L^{r/q}(\sigma)} \\
  &\eqsim\BNorm{\sup_{R\in\mathcal{D}}\frac{1_R}{\sigma(R)}\int_R\Big(\sum_{Q\subseteq R}\alpha_Q 1_Q\Big)}{L^{r/q}(\sigma)}
    \qquad\text{by \eqref{eq:WolffObs}} \\
  &\eqsim\BNorm{\sum_{Q\in\mathcal{D}}\alpha_Q 1_Q}{L^{r/q}(\sigma)} \qquad\text{by Lemma~\ref{lem:dyadicSum}} \\
  &=\Norm{W_{\lambda,\omega}^q[\sigma]}{L^{r/q}(\sigma)}.\qedhere
\end{split}
\end{equation*}
\end{proof}

\subsection{Insufficiency of Sawyer testing for $q<p$}

Here we present a concrete example of an operator as in \eqref{eq:TandTQ}, which satisfies the testing conditions \eqref{eq:SawyerTesting}, but fails to be bounded from $L^p$ to $L^q$. For this example, it is enough to consider both $\sigma$ and $\omega$ eqaul to the Lebesgue measure on $\R^n$. This justifies the use of the more complicated sequential testing in Theorem~\ref{thm:seqTesting}.

Let $\mathcal{C}$ be an infinite chain of dyadic cubes, where any two satisfy $C'\subsetneq C$. Let
\begin{equation*}
  Tf:=\sum_{Q\in\mathcal{C}}\abs{Q}^{-1/r}\fint_Q f\ud x\cdot 1_Q.
\end{equation*}
Then
\begin{equation*}
\begin{split}
  \Norm{T_F 1}{L^q}=\BNorm{\sum_{\substack{Q\in\mathcal{C}\\ Q\subseteq F}}\abs{Q}^{-1/r}1_Q}{L^q}
  &\eqsim\Big(\sum_{\substack{Q\in\mathcal{C}\\ Q\subseteq F}}\abs{Q}^{-q/r}\abs{Q}\Big)^{1/q} \\
  &=\Big(\sum_{\substack{Q\in\mathcal{C}\\ Q\subseteq F}}\abs{Q}^{q/p}\Big)^{1/q}
  \lesssim(\abs{F}^{q/p})^{1/q}=\abs{F}^{1/p},
\end{split}
\end{equation*}
where $\eqsim$ follows from the sparseness of $\mathcal{C}$ via Lemma~\ref{lem:pythagoras}, and $\lesssim$ from the fact that the numbers $\abs{Q}^{q/p}$, for $Q$ in the chain, form a geometric progression with maximal element at most $\abs{F}^{q/p}$. A similar computation with $(p,q)$ replaced by $(q',p')$ shows that $\Norm{T_F 1}{L^{p'}}\lesssim\abs{F}^{1/q'}$. Thus this operator satisfies the Sawyer testing conditions.

We will now show that $T$ is not bounded from $L^p$ to $L^q$. Consider $f=\sum_{Q\in\mathcal{C}}a_Q 1_Q$ with $a_Q\geq 0$ so that, again by sparseness and Lemma~\ref{lem:pythagoras},
\begin{equation*}
  \Norm{f}{L^p}\eqsim\Big(\sum_{Q\in\mathcal{C}}a_Q^p\abs{Q}\Big)^{1/p}=:\Big(\sum_{Q\in\mathcal{C}}b_Q^p\Big)^{1/p},\qquad b_Q:=a_Q\abs{Q}^{1/p}.
\end{equation*}
Then $\fint_Q f\ud x\geq a_Q$, and thus
\begin{equation*}
\begin{split}
  \Norm{Tf}{L^q}
  \geq\BNorm{\sum_{Q\in\mathcal{C}}\abs{Q}^{-1/r}a_Q 1_Q}{L^q}
  &\eqsim\Big(\sum_{Q\in\mathcal{C}}\abs{Q}^{-q/r}a_Q^q\abs{Q}\Big)^{1/q} \\
  &=\Big(\sum_{Q\in\mathcal{C}}a_Q^q\abs{Q}^{q/p}\Big)^{1/q}=\Big(\sum_{Q\in\mathcal{C}}b_Q^q\Big)^{1/q}.
\end{split}
\end{equation*}
It suffices to pick a sequence $(b_Q)_{Q\in\mathcal{C}}\in\ell^p\setminus\ell^q$ to conclude the example.

\section{Two-weight $T1$ theorem for positive operators: the bilinear case}

We now turn to the bilinear case and begin with some general observations concerning a bilinear operator
\begin{equation*}
  T(\,\cdot\,\sigma_1, \,\cdot\,\sigma_2):L^{p_1}(\sigma_1)\times L^{p_2}(\sigma_2)\to L^{p_3'}(\sigma_3).
\end{equation*}
Its norm is defined as the least constant $\|T(\,\cdot\,\sigma_1,\,\cdot\,\sigma_2)\|_{L^{p_1}(\sigma_1)\times L^{p_2}(\sigma_2)\rightarrow L^{p_3'}(\sigma_3)}$  in the inequality
\begin{equation*}
\begin{split}
  &\|T(f_1\sigma_1,f_2\sigma_2)\|_{L^{p_3'}(\sigma_3)} \\
  &\leq \|T(\,\cdot\,\sigma_1,\,\cdot\,\sigma_2)\|_{L^{p_1}(\sigma_1)\times L^{p_2}(\sigma_2)\rightarrow L^{p_3'}(\sigma_3)}
     \| f_1 \|_{L^{p_1}(\sigma_1)}\|f_2 \|_{L^{p_2}(\sigma_2)},
\end{split}
\end{equation*}
which, by duality, equals to the least constant $\| T\|$ in the inequality
$$
\int T(f_1\sigma_1,f_2\sigma_2)  f_3 \dsigma_3 \leq \| T\|\, \|f_1\|_{L^{p_1}(\sigma_1)} \| f_2 \|_{L^{p_2}(\sigma_2)}\|f_3 \|_{L^{p_3}(\sigma_3)}.
$$
The partial adjoint $T(\,\cdot\, \sigma_2, \,\cdot\, \sigma_3):L^{p_2}(\sigma_2)\times L^{p_3}(\sigma_3)\to L^{p_1'}(\sigma_1)$ is defined as the bilinear operator satisfying
$$
\int T(f_2\sigma_2, f_3\sigma_3) f_1\dsigma_1=\int T(f_1\sigma_1,f_2\sigma_2)  f_3\,\dsigma_3,
$$
and the partial adjoint $T(\,\cdot\, \sigma_1, \,\cdot\, \sigma_3):L^{p_1}(\sigma_1)\times L^{p_3}(\sigma_3)\to L^{p_2'}(\sigma_2)$ similarly. By definition,
\begin{equation*}
\begin{split}
\| T\| &=\|T(\,\cdot\,\sigma_1,\,\cdot\,\sigma_2)\|_{L^{p_1}(\sigma_1)\times L^{p_2}(\sigma_2)\rightarrow L^{p_3'}(\sigma_3)}\\
  &=\|T(\,\cdot\,\sigma_2,\,\cdot\,\sigma_3)\|_{L^{p_2}(\sigma_2)\times L^{p_3}(\sigma_3)\rightarrow L^{p_1'}(\sigma_1)} \\
  &=\|T(\,\cdot\,\sigma_1,\,\cdot\,\sigma_3)\|_{L^{p_1}(\sigma_1)\times L^{p_3}(\sigma_3)\rightarrow L^{p_2'}(\sigma_2)}.
\end{split}
\end{equation*}

\subsection{Necessity of sequential testing}\label{subsec:nec-sequential}
We prove one direction of Theorem~\ref{thm:bilinear}, namely, the necessity of the sequential testing, even in the stronger form involving the quantities $\tilde{\mathfrak{T}}_{i,j,k}$. As in the linear case, this necessity statement is a more general result about positive bilinear operators, and does not assume the specific structure \eqref{eq:TandTQbil}.

Let $T(\,\cdot\,\sigma_2, \,\cdot\,\sigma_3):L^{p_2}(\sigma_2)\times L^{p_3}(\sigma_3)\to L^{p_1'} (\sigma_1)$ be a (general) positive bilinear operator. The localized operator $T_Q(\sigma_2,\sigma_3)$ can be understood as $T(\cdot 1_Q \sigma_2 , \cdot 1_Q  \sigma_3)$  or $ 1_Q T(\cdot 1_Q \sigma_2 , \cdot  1_Q \sigma_3)$.

Let  $\cf_3$ be a $\sigma_3$-sparse collection. Then, for each $F_3\in \cf_3$, let $\cf_2(F_3)$ be a $\sigma_2$-sparse collection whose cubes are contained in $F_3$. In this section we prove that
\begin{equation}\label{eq:strongnec}
\begin{split}
&\left\|\bigg\{ \bigg\|\bigg\{ \frac{\|T_{F_2}(\sigma_2,\sigma_3)\|_{L^{p_1'}(\sigma_1)}}{\sigma_2(F_2)^{1/p_2}\sigma_3(F_3)^{1/p_3}} \bigg\}_{F_2\in\cf_2(F_3)}\bigg\|_{\ell^{r_3}} \bigg\}_{F_3\in\cf_3}\right\|_{\ell^r}\\
&\leq 9 p_2p_3\|T(\,\cdot\,\sigma_2,\,\cdot\,\sigma_3)\|_{L^{p_2}(\sigma_2)\times L^{p_3}(\sigma_3)\rightarrow L^{p_1'}(\sigma_1)}
 =9p_2 p_3\|T\|.
\end{split}
\end{equation}

Let us notice that if $r=\infty$, this follows from the linear case (Corollary~\ref{cor:necessityseq}) applied to the operator
\begin{equation*}
  \frac{T(\,\cdot\,\sigma_2,1_{F_3}\sigma_3)}{\sigma(F_3)^{1/p_3}}: L^{p_2}(\sigma_2)\to L^{p_1'}(\sigma_1),
\end{equation*}
so we may concentrate on $r<\infty$, in which case also $r_3\leq r<\infty$. It is useful to observe the algebraic relations
\begin{equation}\label{eq:alg}
  \Big(\frac{r_3}{p_1'}\Big)'=\frac{p_2}{p_1'},\qquad
  \Big(\frac{r}{p_1'}\Big)'=\frac{p_2p_3}{p_1'(p_2+p_3)}.
\end{equation}
in this case.

By duality with respect to both $\ell^{r_3/p_1'}$ and $\ell^{r/p_1'}$ norms, it suffices to prove that
\begin{equation}\label{eq:bilNecToProve}
   \sum_{F_3\in\cf_3}
   \beta_{F_3}\sum_{F_2\in\cf_2(F_3)}\alpha_{F_2,F_3}
   \frac{\|T_{F_2}(\sigma_2,\sigma_3)\|_{L^{p_1'}(\sigma_1)}^{p_1'}  }{\sigma_2(F_2)^{p_1'/p_2}\sigma_3(F_3)^{p_1'/p_3}}
   \leq (9 p_2p_3\|T\|)^{p_1'}
\end{equation}
for all sequences
\begin{equation*}
  \sum_{F_3\in\cf_3}\beta_{F_3}^{(r/p_1')'}\leq 1,\qquad
  \sum_{F_2\in\cf_2(F_3)}\alpha_{F_2,F_3}^{(r_3/p_1')'}\leq 1.
\end{equation*}

In fact, we have
\begin{equation*}
\begin{split}
   &LHS\eqref{eq:bilNecToProve} \\
  &\leq\BNorm{\sum_{F_3\in\cf_3}\beta_{F_3}^{1/p_1'}\sum_{F_2\in\cf_2(F_3)}\alpha_{F_2,F_3}^{1/p_1'}
     \frac{T(1_{F_2}\sigma_2,1_{F_3}\sigma_3)}{\sigma_2(F_2)^{1/p_2}\sigma_3(F_3)^{1/p_3}} }{L^{p_1'}(\sigma_1)}^{p_1'} \\
   &\leq\Big\|T\Big(\sup_{F_3\in \cf_3}\beta_{F_3}^{\frac{p_3}{p_1'(p_2+p_3)}}\sum_{F_2\in\cf_2( F_3)}
       \frac{  \alpha_{F_2, F_3}^{1/{p_1'}} 1_{F_2}}{\sigma_2(F_2)^{\frac 1{p_2}} }\sigma_2,
     \sum_{F_3\in \cf_3} \frac{\beta_{F_3}^{\frac{p_2}{p_1'(p_2+p_3)}} 1_{F_3}}{ \sigma_3(F_3)^{\frac{1}{p_3}} } \sigma_3\Big)
     \Big\|^{p_1'}_{L^{p_1'}(\sigma_1)}\\
   &\leq\|T\|^{p_1'}\BNorm{\sup_{F_3\in \cf_3}\beta_{F_3}^{\frac{p_3}{p_1'(p_2+p_3)}}\sum_{F_2\in\cf_2( F_3)}
       \frac{  \alpha_{F_2, F_3}^{1/{p_1'}} 1_{F_2}}{\sigma_2(F_2)^{\frac 1{p_2}} }}{L^{p_2}(\sigma_2)}^{p_1'} \times \\
&\qquad\qquad \times
    \BNorm{ \sum_{F_3\in \cf_3} \frac{\beta_{F_3}^{\frac{p_2}{p_1'(p_2+p_3)}} 1_{F_3}}{ \sigma_3(F_3)^{\frac{1}{p_3}} } }{L^{p_3}(\sigma_3)}^{p_1'} \\
\end{split}
\end{equation*}

By using $\Norm{\ }{\ell^\infty}\leq\Norm{\ }{\ell^{p_2}}$,  the relations \eqref{eq:alg}, and Lemma \ref{lem:pythagoras}, we get
\begin{eqnarray*}
&&\bigg\|\sup_{F_3\in \cf_3}\beta_{F_3}^{\frac{p_3}{p_1'(p_2+p_3)}}\sum_{F_2\in\cf_2( F_3)}\alpha_{F_2, F_3}^{1/{p_1'}}\sigma_2(F_2)^{-\frac 1{p_2}}  1_{F_2} \bigg\|_{L^{p_2}(\sigma_2)}^{p_1'}\\
&\le& \bigg(\sum_{F_3\in \cf_3}\beta_{F_3}^{(\frac{r }{p_1'})'} \BNorm{\sum_{F_2\in\cf_2( F_3)}\alpha_{F_2, F_3}^{1/{p_1'}}\sigma_2(F_2)^{-\frac 1{p_2}}  1_{F_2}}{L^{p_2}(\sigma_2)}^{p_2}\bigg)^{p_1'/{p_2}}\\
&\le& \bigg(\sum_{F_3\in \cf_3}\beta_{F_3}^{(\frac{r}{p_1'})'}  (3p_2)^{p_2}\sum_{F_2\in\cf_2(F_3)} \alpha_{F_2,F_3}^{(\frac{r_3}{p_1'})'} \bigg)^{p_1'/{p_2}}
\le  (3p_2)^{p_1'}.
\end{eqnarray*}
Similarly, by Lemma \ref{lem:pythagoras},
\[
\bigg\|    \sum_{F_3\in \cf_3}\beta_{F_3}^{\frac{p_2}{p_1'(p_2+p_3)}}\sigma_3(F_3)^{-\frac{1}{p_3}}  1_{F_3}      \bigg\|_{L^{p_3}(\sigma_3)}^{p_1'}\le (3p_3)^{p_1'}.
\]
Combining the arguments together, we get \eqref{eq:bilNecToProve} and thus \eqref{eq:strongnec}.

\subsection{Sufficiency of sequential testing}\label{subsec:suf-sequential}

We turn to the other direction of Theorem~\ref{thm:bilinear}. In this subsection we prove that the weaker version of the testing conditions, defined in terms of the testing constants $\mathfrak{T}_{i,j,k}$, is already sufficient for the boundedness of the bilinear operator $T$. (For readers familiar with the proof of Lemma~\ref{lem:linearCore} from the linear case, which we simply borrowed from earlier papers, it is good to observe that the bilinear argument that follows, although analogous, is set up in a slightly different way, and would reduce to a slight variation of the earlier existing arguments in the linear case.)

The terminology and the notation are fixed in Section \ref{sec:basiclemmas}.
Let $\cd$ be a collection of dyadic cubes such that for some $Q_0\in\cd$ we have $Q\subseteq Q_0$ for all $Q\in\cd$. For each $i=1,2,3$, let $\cf_i$ be the stopping family starting at $Q_0$ and defined by the stopping condition
\begin{equation*}
  \ch_{\cf_i}(F_i):=\{F'_i\in\cd : \text{$F_i'$ maximal such that $\langle f_i \rangle^{\sigma_i}_{F'_i}>2 \langle f_i \rangle^{\sigma_i}_{F_i}$}\}.
\end{equation*}
Each collection $\cf_i$ is $\sigma_i$-sparse, since
\begin{equation*}
  \sum_{F_i'\in \ch_{\cf_i}(F_i)} \sigma_i(F_i')\leq \frac{1}{2} \frac{\sum_{F_i'\in \ch_{\cf_i}(F_i)} \int_{F_i'} f \dsigma}{\int_{F_i} f \dsigma} \sigma_i(F_i)
  \leq \frac{1}{2}  \sigma_i(F_i).
\end{equation*}
The $\cf_i$-stopping parent $\pi_{\cf_i}(Q)$ of a cube $Q$ is defined by
$
\pi_{\cf_i}(Q):=\{F_i \in\cf_i : \text{$F_i$ minimal such that $F_i\supseteq Q$}\}.
$
By the stopping condition, for every cube $Q$ we have $\langle f_i \rangle^{\sigma_i}_{Q}\leq 2 \langle f_i \rangle^{\sigma_i}_{\pi_{\cf_i}(Q)}$.


By rearranging the summation according to the stopping parents, we have
\begin{equation*}
\begin{split}
&\sum_{Q\in\cd} \lambda_Q \int_Q f_1 \dsigma_1 \int_Q f_2 \dsigma_2 \int_Q f_3 \dsigma_3\\
&=\sum_{ (F_1,F_2,F_3)\in(\cf_1, \cf_2, \cf_3)} \sum_{\substack{Q\in\cd:\\ \pi(Q)=(F_1,F_2,F_3)}} \lambda_Q \int_Q f_1 \dsigma_1 \int_Q f_2 \dsigma_2 \int_Q f_3 \dsigma_3\\
&\leq 8 \sum_{(F_1,F_2,F_3)\in(\cf_1, \cf_2, \cf_3)} \langle f_3 \rangle^{\sigma_3}_{F_3} \langle f_2 \rangle^{\sigma_2}_{F_2}  \langle f_1 \rangle^{\sigma_1}_{F_1} \sum_{\substack{Q\in\cd:\\ \pi(Q)=(F_1,F_2,F_3)}} \lambda_Q \int_Q  \dsigma_3 \int_Q  \dsigma_2 \int_Q\dsigma_1.
\end{split}
\end{equation*}
Since $Q\subseteq F_1\cap F_2 \cap F_3$, the cubes $F_i$ are ordered by inclusion. By symmetry, it suffices to consider the case $F_1\subseteq F_2\subseteq F_3$. Since $\pi(Q)=(F_1,F_2,F_3)$, we have that $F_i'\subsetneq Q$ for every $F'_i\in\ch_{\cf_i}(F_i)$ that intersects $Q$, for each $i=1,2,3$. Therefore $\pi_{\cf_2}(F_1)=F_2$, $\pi_{\cf_3}(F_1)=F_3$, and $\pi_{\cf_3}(F_2)=F_3$. We impose this condition by defining
$
\cf_1(F_2,F_3):=\{F_1\in\cf_1 : \pi_{\cf_2}(F_1)=F_2, \pi_{\cf_3}(F_1)=F_3\},$ and $\cf_2(F_3):=\{F_2\in\cf_2 : \pi_{\cf_3}(F_2)=F_3\}.$
Now,

\begin{equation*}
\begin{split}
&\sum_{\substack{(F_1, F_2, F_3)\in(\cf_1,\cf_2,\cf_3)\\ \cf_1\subseteq \cf_2\subseteq \cf_3}} \langle f_3 \rangle^{\sigma_3}_{F_3} \langle f_2 \rangle^{\sigma_2}_{F_2}  \langle f_1 \rangle^{\sigma_1}_{F_1} \sum_{\substack{Q\in\cd:\\ \pi(Q)=(F_1,F_2,F_3)}} \lambda_Q \int_Q  \dsigma_3 \int_Q  \dsigma_2 \int_Q\dsigma_1\\
&\leq \sum_{F_3\in \cf_3}  \langle f_3 \rangle^{\sigma_3}_{F_3} \times \sum_{F_2\in\cf_2(F_3)}\langle f_2 \rangle^{\sigma_2}_{F_2} \\
&\times \int  \sum_{F_1\in\cf_1(F_2,F_3)} \Big(\sum_{\substack{Q\in\cd:\\ \pi(Q)=(F_1,F_2,F_3)}}   \lambda_Q \int_Q  \dsigma_3 \int_Q  \dsigma_2 1_Q \Big) \langle f_1 \rangle^{\sigma_1}_{F_1} \dsigma_1.
\end{split}
\end{equation*}
For the innermost summation, we obtain
\begin{eqnarray*}
&&\int \sum_{F_1\in\cf_1(F_2,F_3)}\Big(  \sum_{\substack{Q\in\cd:\\ \pi(Q)=(F_1,F_2,F_3)}}   \lambda_Q \int_Q  \dsigma_3 \int_Q  \dsigma_2 1_Q \Big) \langle f_1 \rangle^{\sigma_1}_{F_1} \dsigma_1\\
&\leq &\int\sum_{F_1\in\cf_1(F_2,F_3)} \Big(  \sum_{\substack{Q\in\cd:\\ \pi(Q)=(F_1,F_2,F_3)}}   \lambda_Q \int_Q  \dsigma_3 \int_Q  \dsigma_2 1_Q \Big) \sup_{F'_1 \in \cf_1(F_2,F_3)} \langle f_1 \rangle^{\sigma_1}_{F_1'} 1_{F_1'} \dsigma_1\\
&\leq &\int \Big( \sum_{\substack{Q\in\cd:\\ Q\subseteq F_2}}   \lambda_Q \int_Q  \dsigma_3 \int_Q  \dsigma_2 1_Q \Big) \sup_{F'_1 \in \cf_1(F_2,F_3)} \langle f_1 \rangle^{\sigma_1}_{F_1'} 1_{F_1'} \dsigma_1\\
&\leq &\Nnorm{T_{F_2}(\sigma_2,\sigma_3)}_{L^{p_1'}(\sigma_1)} \BNorm{\sup_{F_1 \in \cf_1(F_2,F_3)} \langle f_1 \rangle^{\sigma_1}_{F_1} 1_{F_1}}{L^{p_1}(\sigma_1)}\\
&\leq &\Nnorm{T_{F_2}(\sigma_2,\sigma_3)}_{L^{p_1'}(\sigma_1)} \Big(\sum_{F_1 \in \cf_1(F_2,F_3)} \big(\langle f_1 \rangle^{\sigma_1}_{F_1}\big)^{p_1} \sigma_1(F_1)\Big)^{1/p_1}.
\end{eqnarray*}

Next, we use H\"older's inequality iteratively, from the inner summations to the outer ones.
 Recall that $r_3$ and $r$ are defined by
 \begin{equation*}
\begin{split}
  \frac{1}{{r_3}}:=\big(1-\frac{1}{p_1}-\frac{1}{p_2}\big)_+,\qquad \frac 1r:=\big(1-\frac{1}{p_1}-\frac{1}{p_2} -\frac{1}{p_3}\big)_+ .
\end{split}
\end{equation*}

By the first application of H\"older's inequality, we have
\begin{equation}\label{eq:innerHolder}
\begin{split}
&\sum_{F_2\in\cf_2(F_3)} \langle f_2 \rangle^{\sigma_2}_{F_2} \sigma_2(F_2)^{1/p_2}  \bigg(\frac{\Nnorm{T_{F_2}(\sigma_2,\sigma_3)}_{L^{p_1'}(\sigma_1)}}{\sigma_2(F_2)^{1/p_2}}\bigg) \\
&\qquad\times \Big(\sum_{F_1 \in \cf_1(F_2,F_3)} \big(\langle f_1 \rangle^{\sigma_1}_{F_1}\big)^{p_1} \sigma_1(F_1)\Big)^{1/p_1}\\
&\leq \bigg\|\bigg\{ \frac{\Nnorm{T_{F_2}(\sigma_2,\sigma_3)}_{L^{p_1'}(\sigma_1)}}{\sigma_2(F_2)^{1/p_2}} \bigg\}_{F_2\in\cf_2(F_3)} \bigg\|_{\ell^{r_3}} \\
&\qquad\times  \bigg(\sum_{F_2\in\cf_2(F_3)} (\langle f_2 \rangle^{\sigma_2}_{F_2} )^{p_2} \sigma_2(F_2)  \bigg)^{1/{p_2}}\\
&\qquad \times \Big(\sum_{\substack{F_2\in\cf_2(F_3) \\ F_1 \in \cf_1(F_2,F_3)}} \big(\langle f_1 \rangle^{\sigma_1}_{F_1}\big)^{p_1} \sigma_1(F_1)\Big)^{1/p_1}.
\end{split}
\end{equation}

By H\"older's inequality again, it follows that
\begin{eqnarray*}
&&\sum_{F_3\in \cf_3} \langle f_3 \rangle^{\sigma_3}_{F_3} \sigma_3(F_3)^{1/p_3}\times RHS\eqref{eq:innerHolder} \\
&\leq & \left\|\bigg\{ \bigg\|\bigg\{ \frac{\Nnorm{T_{F_2}(\sigma_2,\sigma_3)}_{L^{p_1'}(\sigma_1)}}{\sigma_2(F_2)^{1/p_2}\sigma_3(F_3)^{1/p_3}} \bigg\}_{F_2\in\cf_2 : F_2\subseteq F_3}\bigg\|_{\ell^{r_3}} \bigg\}_{F_3\in\cf_3}\right\|_{\ell^r} \\
&&\qquad\times\Big(\sum_{\substack{F_3\in\cf_3 \\ F_2\in\cf_2(F_3) \\ F_1 \in \cf_1(F_2,F_3)}}
   \big(\langle f_1 \rangle^{\sigma_1}_{F_1}\big)^{p_1} \sigma_1(F_1)\Big)^{1/p_1} \times \\
&&\qquad\times  \bigg(\sum_{\substack{F_3\in\cf_3 \\ F_2\in\cf_2(F_3)}} (\langle f_2 \rangle^{\sigma_2}_{F_2} )^{p_2} \sigma_2(F_2)  \bigg)^{1/{p_2}} \bigg(\sum_{F_3\in\cf_3 } (\langle f_3 \rangle^{\sigma_3}_{F_3} )^{p_3} \sigma_3(F_3)  \bigg)^{1/{p_3}} \\
&\leq&  \mathfrak T_{1,2,3}  \prod_{i=1}^3  \Big(\sum_{F_i \in \cf_i } \big(\langle f_i \rangle^{\sigma_i}_{F_i}\big)^{p_i} \sigma_i(F_i)\Big)^{1/p_i},
\end{eqnarray*}
where in the last step we used the facts that
$\sum_{F_3\in\cf_3} \sum_{\substack{F_2\in\cf_2:\\ \pi_{\cf_3}(F_2)=F_3}}=\sum_{F_2\in\cf_2}$ and
$$\sum_{F_3\in\cf_3} \sum_{\substack{F_2\in\cf_2:\\ \pi_{\cf_3}(F_2)=F_3}}\sum_{\substack{F_1\in\cf_1:\\ \pi_{\cf_2}(F_1)=F_2\\\pi_{\cf_3}(F_1)=F_3}}\leq \sum_{F_3\in\cf_3} \sum_{\substack{F_2\in\cf_2:\\ \pi_{\cf_3}(F_2)=F_3}}\sum_{\substack{F_1\in\cf_1:\\ \pi_{\cf_2}(F_1)=F_2}}= \sum_{F_1\in\cf_1}.$$
The proof is completed by Lemma~\ref{lm:carleson}, which implies that
$$
 \Big(\sum_{F_i \in \cf_i } \big(\langle f_i \rangle^{\sigma_i}_{F_i}\big)^{p_i} \sigma_i(F_i)\Big)^{1/p_i}  \leq 2p_i' \Nnorm{f_i}_{L^{p_i}(\sigma_i)}
$$
for each $i=1,2,3$.

\subsection{Discrete Wolff potential: the case $\frac 1{p_1}+\frac 1{p_2}+\frac 1{p_3}<1$}

In this subsection, we prove Theorem~\ref{thm:discreteWolff}. The reader is encouraged to recall the notation in the statement of that Theorem.

\begin{proof}[Proof of Theorem~\ref{thm:discreteWolff}]
First, we prove the `$\lesssim$' part.
By Theorem~\ref{thm:bilinear}, we only need to dominate the sequential testing constants by the
the discrete Wolff potentials.
By symmetry, we only need to consider the case $(i,j,k)=(1,2,3)$. Consider
\[
\left\|\bigg\{ \bigg\|\bigg\{ \frac{\Nnorm{T_{F_2}(\sigma_2,\sigma_3)}_{L^{p_1'}(\sigma_1)}}{\sigma_2(F_2)^{1/p_2}\sigma_3(F_3)^{1/p_3}} \bigg\}_{F_2\in\cf_2 : F_2\subseteq F_3}\bigg\|_{\ell^{r_3}} \bigg\}_{F_3\in\cf_3}\right\|_{\ell^r},
\]
where $\cf_2,\cf_3$ are $\sigma_2, \sigma_3$-sparse sequence, respectively. Using notation as in \eqref{eq:defLambdaQsigma}, we have
\begin{eqnarray*}
&&\bigg\|\bigg\{ \frac{\Nnorm{T_{F_2}(\sigma_2,\sigma_3)}_{L^{p_1'}(\sigma_1)}}{\sigma_2(F_2)^{1/p_2}} \bigg\}_{F_2\in\cf_2 : F_2\subseteq F_3}\bigg\|_{\ell^{r_3}}\\
&=&\bigg( \sum_{F_2\in \cf_2\atop F_2\subset F_3} \frac 1{\sigma_2(F_2)^{r_3/{p_2}}}
\bigg(\int\bigg(\sum_{Q\subset F_2} \lambda_Q \sigma_2(Q)\sigma_3(Q) 1_Q   \bigg)^{p_1'}   \dsigma_1       \bigg)^{r_3/{p_1'}}      \bigg)^{\frac 1{r_3}}\\
&\overset{(*)}{\eqsim}&\bigg( \sum_{F_2\in \cf_2\atop F_2\subset F_3} \frac 1{\sigma_2(F_2)^{r_3/{p_2}}}
\bigg(\sum_{Q\subset F_2} \lambda_Q \lambda_{Q,\sigma_1}^{p_1'-1}\sigma_1(Q)\sigma_2(Q)\sigma_3(Q)      \bigg)^{r_3/{p_1'}}      \bigg)^{\frac 1{r_3}}\\
&\lesssim&\bigg( \sum_{F_2\in \cf_2\atop F_2\subset F_3}
\bigg(  \frac 1{\sigma_2(F_2)}\int_{F_2}\sum_{Q\subset F_2} \lambda_Q \lambda_{Q,\sigma_1}^{p_1'-1}\sigma_1(Q)\sigma_3(Q)  1_Q  \dsigma_2    \bigg)^{r_3/{p_1'}}    \sigma_2(E_{\cf_2}(F_2))  \bigg)^{\frac 1{r_3}}\\
&\le& \bigg\|  M_{\mathcal D}^{\sigma_2}\Big(\sum_{Q\subset F_3} \lambda_Q \lambda_{Q,\sigma_1}^{p_1'-1}\sigma_1(Q)\sigma_3(Q)  1_Q\Big)           \bigg\|_{L^{r_3/{p_1'}}(\sigma_2)}^{1/{p_1'}}\\
&\lesssim& \bigg\| \sum_{Q\subset F_3} \lambda_Q \lambda_{Q,\sigma_1}^{p_1'-1}\sigma_1(Q)\sigma_3(Q)  1_Q        \bigg\|_{L^{r_3/{p_1'}}(\sigma_2)}^{1/{p_1'}},
\end{eqnarray*}
where in the step marked with $(*)$ we used Lemma \ref{lem:dyadicSum}.
Recalling also the notation $\lambda_{Q,\sigma_2,\sigma_1}$ from \eqref{eq:defLambdaQsigma}, and using Lemma~\ref{lem:dyadicSum} and the maximal function estimate again, we have
\begin{eqnarray*}
&&\left\|\bigg\{ \bigg\|\bigg\{ \frac{\Nnorm{T_{F_2}(\sigma_2,\sigma_3)}_{L^{p_1'}(\sigma_1)}}{\sigma_2(F_2)^{1/p_2}\sigma_3(F_3)^{1/p_3}} \bigg\}_{F_2\in\cf_2 : F_2\subseteq F_3}\bigg\|_{\ell^{r_3}} \bigg\}_{F_3\in\cf_3}\right\|_{\ell^r}\\
&=& \bigg( \sum_{F_3\in\cf_3}\frac 1{\sigma_3(F_3)^{r/{p_3}}}
\bigg(   \int_{F_3} \bigg(  \sum_{Q\subset F_3} \lambda_Q \lambda_{Q,\sigma_1}^{p_1'-1}\sigma_1(Q)\sigma_3(Q)  1_Q  \bigg)^{r_3/{p_1'}}\dsigma_2    \bigg)^{r/{r_3}}                                                                        \bigg)^{1/{r}}\\
&\eqsim& \bigg( \sum_{F_3\in\cf_3}\frac 1{\sigma_3(F_3)^{r/{p_3}}}
\bigg(    \sum_{Q\subset F_3} \lambda_Q \lambda_{Q,\sigma_1}^{p_1'-1}\lambda_{Q,\sigma_2,\sigma_1}^{\frac{r_3}{p_1'}-1}\sigma_1(Q)\sigma_2(Q)\sigma_3(Q) \bigg)^{r/{r_3}}   \bigg)^{1/{r}}\\
&\lesssim& \bigg\|  \bigg(  \sum_{Q\in\mathcal D} \lambda_Q \lambda_{Q,\sigma_1}^{p_1'-1}\lambda_{Q,\sigma_2,\sigma_1}^{\frac{r_3}{p_1'}-1}\sigma_1(Q)\sigma_2(Q)  1_Q\bigg)^{1/{r_3}}           \bigg\|_{L^{r}(\sigma_3)}\\
&=&\| \mathcal W_{\sigma_1, \sigma_2}[\sigma_3]^{1/{r_3}}\|_{L^{r}(\sigma_3)}.
\end{eqnarray*}

Next, we shall show that
\[
\| \mathcal W_{\sigma_1, \sigma_2 }[\sigma_3]^{1/{r_3}}\|_{L^{r}(\sigma_3)}\lesssim \| T(\,\cdot\,\sigma_1, \,\cdot\,\sigma_2)\|_{L^{p_1}(\sigma_1)\times L^{p_2}(\sigma_2)\rightarrow L^{p_3'}(\sigma_3)}=:\|T\|.
\]
By duality, we have
\begin{eqnarray*}
&&\| \mathcal W_{\sigma_1, \sigma_2}[\sigma_3]^{1/{r_3}}\|_{L^{r}(\sigma_3)}^{r_3}\\
&=& \sup\Big\{\sum_{Q\in\mathcal D} \lambda_Q \lambda_{Q,\sigma_1}^{p_1'-1}\lambda_{Q,\sigma_2,\sigma_1}^{\frac{r_3}{p_1'}-1}\sigma_1(Q)\sigma_2(Q)\int_Q g \dsigma_3:
  \|g\|_{L^{(r/{r_3})'(\sigma_3)}}=1\Big\}.
\end{eqnarray*}
In the following, we will suppress the supremum and give an uniform bound for the right side of the equality.
We have
\begin{eqnarray*}
&& \sum_{Q\in\mathcal D} \lambda_Q \lambda_{Q,\sigma_1}^{p_1'-1}\lambda_{Q,\sigma_2,\sigma_1}^{\frac{r_3}{p_1'}-1}\sigma_1(Q)\sigma_2(Q)\int_Q g \dsigma_3\\
&=&  \sum_{Q\in\mathcal D} \lambda_Q \lambda_{Q,\sigma_1}^{p_1'-1}\sigma_1(Q)\sigma_2(Q)\sigma_3(Q)\bigg(\frac{\int_Q g \dsigma_3}{\sigma(Q)}\bigg)^{p_1'/{r_3}}\\
&&\quad\times \bigg(\frac 1{\sigma_2(Q)}\bigg(\frac{\int_Q g \dsigma_3}{\sigma(Q)}\bigg)^{p_1'/{r_3}}\sum_{Q'\subset Q}\lambda_{Q'}\lambda_{Q',\sigma_1}^{p_1'-1}\sigma_1(Q')\sigma_2(Q')\sigma_3(Q')\bigg)^{\frac{r_3}{p_1'}-1}\\
&\le& \sum_{Q\in\mathcal D} \lambda_Q \lambda_{Q,\sigma_1}^{p_1'-1}\sigma_1(Q)\sigma_2(Q)\int_Q (M_{\mathcal D}^{\sigma_3}g)^{p_1'/{r_3}} \dsigma_3\\
&&\quad\times \bigg(\frac 1{\sigma_2(Q)}\sum_{Q'\subset Q}\lambda_{Q'}\lambda_{Q',\sigma_1}^{p_1'-1}\sigma_1(Q')\sigma_2(Q')\int_{Q'} (M_{\mathcal D}^{\sigma_3}g)^{p_1'/{r_3}} \dsigma_3\bigg)^{\frac{r_3}{p_1'}-1}\\
&\overset{(*)}{\eqsim}&\int  \bigg(  \sum_{Q\in\mathcal D} \lambda_Q \lambda_{Q,\sigma_1}^{p_1'-1}\sigma_1(Q)\int_Q (M_{\mathcal D}^{\sigma_3}g)^{p_1'/{r_3}} \dsigma_3    1_Q                                                     \bigg)^{r_3/{p_1'}} \dsigma_2,
\end{eqnarray*}
where, in the step marked with $(*)$, we used Lemma \ref{lem:dyadicSum}.

Again, we use duality. Let $h\ge0$ with $\|h\|_{L^{(r_3/{p_1'})'}(\sigma_2)}=1$. We then have
\begin{eqnarray*}
&&\sum_{Q\in\mathcal D} \lambda_Q \lambda_{Q,\sigma_1}^{p_1'-1}\sigma_1(Q)\int_Q (M_{\mathcal D}^{\sigma_3}g)^{p_1'/{r_3}} \dsigma_3   \int_Q h \dsigma_2\\
&=& \sum_{Q\in\mathcal D} \lambda_Q \sigma_1(Q)\sigma_2(Q)\sigma_3(Q)\bigg(\frac{\int_Q (M_{\mathcal D}^{\sigma_3}g)^{p_1'/{r_3}} \dsigma_3 }{\sigma_3(Q)}\bigg)^{1/{p_1'}} \bigg(\frac{ \int_Q h \dsigma_2}{\sigma_2(Q)}\bigg)^{1/{p_1'}}\\
&&\quad\times\bigg(\frac 1{\sigma_1(Q)}\bigg(\frac{\int_Q (M_{\mathcal D}^{\sigma_3}g)^{p_1'/{r_3}} \dsigma_3 }{\sigma_3(Q)}\bigg)^{1/{p_1'}} \bigg(\frac{ \int_Q h \dsigma_2}{\sigma_2(Q)}\bigg)^{1/{p_1'}}\\
&&\quad\times\sum_{Q'\subset Q}\lambda_{Q'}\sigma_1(Q')\sigma_2(Q')\sigma_3(Q')\bigg)^{p_1'-1}\\
&\le& \sum_{Q\in\mathcal D} \lambda_Q \sigma_1(Q)\int_Q M_{\mathcal D}^{\sigma_3}((M_{\mathcal D}^{\sigma_3}g)^{p_1'/{r_3}})^{1/{p_1'}} \dsigma_3\int_Q (M_{\mathcal D}^{\sigma_2}h)^{1/{p_1'}}\dsigma_2\\
&&\quad\times\bigg(\frac 1{\sigma_1(Q)}\sum_{Q'\subset Q}\lambda_{Q'}\sigma_1(Q')\int_{Q'} M_{\mathcal D}^{\sigma_3}((M_{\mathcal D}^{\sigma_3}g)^{p_1'/{r_3}})^{1/{p_1'}} \dsigma_3 \\
&&\quad\times\int_{Q'} (M_{\mathcal D}^{\sigma_2}h)^{1/{p_1'}}\dsigma_2\bigg)^{p_1'-1}\\
&\overset{(*)}{\eqsim}&\| T((M_{\mathcal D}^{\sigma_2}h)^{1/{p_1'}}\sigma_2, M_{\mathcal D}^{\sigma_3}((M_{\mathcal D}^{\sigma_3}g)^{p_1'/{r_3}})^{1/{p_1'}}\sigma_3)   \|_{L^{p_1'}(\sigma_1)}^{p_1'}\\
&\le&\|T\|^{p_1'} \cdot\|(M_{\mathcal D}^{\sigma_2}h)^{1/{p_1'}}\|_{L^{p_2}(\sigma_2)}^{p_1'}
\| M_{\mathcal D}^{\sigma_3}((M_{\mathcal D}^{\sigma_3}g)^{p_1'/{r_3}})^{1/{p_1'}}\|_{L^{p_3}(\sigma_3)}^{p_1'}\\
&\lesssim &\|T\|^{p_1'},
\end{eqnarray*}
where, in the step marked with $(*)$, we used Lemma \ref{lem:dyadicSum}, and, in the last step, the boundedness of the maximal operator together with the algebraic relations
$$
\frac{p_2}{p_1'}=\Big(\frac{r_3}{p_1'}\Big)', \text{ and } \frac{p_3}{r_3}=\Big(\frac{r}{r_3}\Big)'.
$$
Therefore,
\[
\sum_{Q\in\mathcal D} \lambda_Q \lambda_{Q,\sigma_1}^{p_1'-1}\lambda_{Q,\sigma_2,\sigma_1}^{\frac{r_3}{p_1'}-1}\sigma_1(Q)\sigma_2(Q)\int_Q g \dsigma_3\lesssim\|T\|^{r_3},
\]
and consequently,
\[
\| \mathcal W_{\sigma_1, \sigma_2 }[\sigma_3]^{1/{r_3}}\|_{L^{r}(\sigma_3)}\lesssim \|T\|.
\]
\end{proof}

\subsection{Abstract Wolff potential}
In this subsection, we also extend the abstract Wolff potential to the bilinear setting. We will give an analogous result to Proposition~\ref{prop:abstractWolff}. In this subsection, $T$ can be any positive bilinear operator. The corresponding localized operator $T_Q$ can be understood as $T(\cdot 1_Q , \cdot 1_Q  )$  or $ 1_Q T(\cdot 1_Q  , \cdot  1_Q )$.

Let $(i,j,k)$ be a permutation of $(1,2,3)$. We define the abstract Wolff potential functions $W$ and constants $\mathfrak{W}$ by cases depending on the exponents $p_i,p_j,p_k$:
\begin{itemize}
\item Case $\frac 1{p_i}+\frac 1{p_j}\ge 1$:
$$
\mathfrak{W}_{T, (i,j,k)}:=
\sup_{Q\in\cd}\frac{\|T_Q( \sigma_j,  \sigma_k)\|_{L^{p_i'}(\sigma_i)}}
{\sigma_j(Q)^{1/{p_j}}\sigma_k(Q)^{1/{p_k}}}.
$$
\end{itemize}
For the case $\frac 1{p_i}+\frac 1{p_j}< 1$, we define the auxiliary function
$$
W_{Q,(i,j,k)}:=\sup_{\substack{Q'\in\cd: \\Q'\subset Q}}\frac{1_{Q'}}{\sigma_j(Q')}\|T_{Q'}( \sigma_j,  \sigma_k)\|_{L^{p_i'}(\sigma_i)}^{p_i'}.
$$
\begin{itemize}
\item Case $\frac 1{p_i}+\frac 1{p_j}< 1\text{ and } \frac 1{p_i}+\frac 1{p_j}+\frac 1{p_k}\geq 1$:
$$
\mathfrak{W}_{T,(i,j,k)}:= \sup_{Q\in\cd} \frac{\| W_{Q,(i,j,k)}^{1/{p_i'}} \|_{L^{r_k}(\sigma_j)}}{\sigma_k(Q)^{1/{p_k}}} 
$$
\item Case $\frac 1{p_i}+\frac 1{p_j}< 1\text{ and } \frac 1{p_i}+\frac 1{p_j}+\frac 1{p_k}< 1$:
\begin{equation*}
\begin{split}
&W_{T,(i,j,k)}:=\sup_{Q\in\mathcal{D}}\frac{1_Q}{\sigma_k(Q)} \|W_{Q,(i,j,k)}^{1/p_i'}\|_{L^{r_k}(\sigma_j)}^{r_k}, \\
&\mathfrak{W}_{T,(i,j,k)}:= \|W_{T,(i,j,k)}^{1/r_k}\|_{L^r(\sigma_k)} .
\end{split}
\end{equation*}
\end{itemize}

\subsubsection{Case $\frac 1{p_1}+\frac 1{p_2}+\frac 1{p_3}\ge 1$}

\begin{theorem}\label{thm:ge1}
Suppose $1/{p_1}+1/{p_2}+1/{p_3}\ge 1$.
Let $T$ be a positive bilinear operator and $\mathfrak T_{i,j,k}$ and $\mathfrak{W}_{T,(i,j,k)}$ be defined as above. Then
\[
\mathfrak{W}_{T,(i,j,k)} \eqsim \mathfrak T_{i,j,k} \lesssim \|T\|.
\]
\end{theorem}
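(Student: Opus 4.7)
The plan is to split the claim into the two inequalities $\mathfrak{T}_{i,j,k}\lesssim\|T\|$ and $\mathfrak{W}_{T,(i,j,k)}\eqsim\mathfrak{T}_{i,j,k}$, and to treat the equivalence by two subcases matching the case-distinction in the definition of $\mathfrak{W}_{T,(i,j,k)}$. The bound $\mathfrak{T}_{i,j,k}\lesssim\|T\|$ is essentially already on the table: our hypothesis $1/p_1+1/p_2+1/p_3\geq1$ forces $r=\infty$, so the outer $\ell^r$ in the definition of $\mathfrak{T}_{i,j,k}$ is a supremum over $F_k\in\cf_k$. Specializing to the trivially $\sigma_k$-sparse singleton $\cf_k=\{F_k\}$ and then taking the supremum over $F_k\in\cd$, I obtain exactly the left-hand side of the strong necessity inequality \eqref{eq:strongnec} proven (by symmetry) in Section~\ref{subsec:nec-sequential}, which is controlled by $\|T\|$.

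For the equivalence $\mathfrak{W}_{T,(i,j,k)}\eqsim\mathfrak{T}_{i,j,k}$, I first dispose of the case $1/p_i+1/p_j\geq1$, where $r_k=\infty$ and both nested norms in $\mathfrak{T}_{i,j,k}$ reduce to suprema. Specializing to singleton sparse collections $\cf_j=\{Q\},\cf_k=\{Q\}$ yields $\mathfrak{W}_{T,(i,j,k)}\leq\mathfrak{T}_{i,j,k}$, while the reverse bound follows from the monotonicity $\sigma_k(F_j)\leq\sigma_k(F_k)$ for $F_j\subseteq F_k$. The interesting subcase is $1/p_i+1/p_j<1$. Here I set $\tau_{Q'}:=\|T_{Q'}(\sigma_j,\sigma_k)\|_{L^{p_i'}(\sigma_i)}^{p_i'}$ and rewrite
\[
  \|W_{Q,(i,j,k)}^{1/p_i'}\|_{L^{r_k}(\sigma_j)}^{r_k}=\BNorm{\sup_{Q'\subseteq Q}\frac{1_{Q'}}{\sigma_j(Q')}\tau_{Q'}}{L^{r_k/p_i'}(\sigma_j)}^{r_k/p_i'}.
\]
The algebraic identity $1/r_k=1/p_i'-1/p_j$ ensures $r_k/p_i'>1$, so Lemma~\ref{lem:dyadicSup} (with $\sigma=\sigma_j$ and $s=r_k/p_i'$) applies once I verify that the $\tau_{Q'}$ are superadditive in the sense of \eqref{eq:tauSuperadditive}. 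Superadditivity follows from bilinear expansion: for pairwise disjoint $Q_a\subseteq Q$, positivity gives $T(\sum_a 1_{Q_a}\sigma_j,\sum_a 1_{Q_a}\sigma_k)\geq\sum_a T(1_{Q_a}\sigma_j,1_{Q_a}\sigma_k)$, and this in turn is at most $T(1_Q\sigma_j,1_Q\sigma_k)$, whence $\sum_a T_{Q_a}(\sigma_j,\sigma_k)\leq T_Q(\sigma_j,\sigma_k)$ pointwise; raising to the $p_i'$-th power and integrating, as in the proof of Proposition~\ref{prop:abstractWolff}, yields $\sum_a\tau_{Q_a}\leq\tau_Q$. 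Applying Lemma~\ref{lem:dyadicSup} and using the exponent identity $1-r_k/p_i'=-r_k/p_j$ (again equivalent to $1/r_k=1/p_i'-1/p_j$) rearranges the resulting sparse sum into $\sup_{\cf}\sum_{F\in\cf}\|T_F(\sigma_j,\sigma_k)\|_{L^{p_i'}(\sigma_i)}^{r_k}/\sigma_j(F)^{r_k/p_j}$, with $\cf$ ranging over $\sigma_j$-sparse subcollections contained in $Q$. Dividing by $\sigma_k(Q)^{1/p_k}$ and taking the supremum over $Q=F_k$ delivers $\mathfrak{W}_{T,(i,j,k)}\eqsim\mathfrak{T}_{i,j,k}$.

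The main obstacle is not really computational but structural: I must set things up so that Lemma~\ref{lem:dyadicSup} can be invoked, which amounts to (i) checking the bilinear superadditivity of $\tau_{Q'}$, and (ii) threading the exponents $r_k$, $p_i'$, $p_j$ through the $L^s$-version of the lemma so that the sparse-sum format produced by the lemma matches, term by term, the nested $\ell^{r_k}$-quantity appearing in $\mathfrak{T}_{i,j,k}$. Once these two ingredients are in place, the rest of the argument is purely bookkeeping.
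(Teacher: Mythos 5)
Your proposal is correct and takes essentially the same route as the paper: the bound $\mathfrak T_{i,j,k}\lesssim\|T\|$ is deferred to the necessity argument of Subsection~\ref{subsec:nec-sequential}, the case $\frac1{p_i}+\frac1{p_j}\ge1$ is handled by the same sup/monotonicity observation, and in the case $\frac1{p_i}+\frac1{p_j}<1$ the quantity $\|W_{Q,(i,j,k)}^{1/p_i'}\|_{L^{r_k}(\sigma_j)}$ is matched with the sparse $\ell^{r_k}$-sums through exactly the stopping-cube mechanism the paper uses. The only difference is presentational: you invoke Lemma~\ref{lem:dyadicSup} after explicitly verifying the bilinear superadditivity of $\tau_{Q'}=\|T_{Q'}(\sigma_j,\sigma_k)\|_{L^{p_i'}(\sigma_i)}^{p_i'}$ (a point the paper leaves implicit in ``it is easy to check that $\cf_j$ is $\sigma_j$-sparse''), whereas the paper writes out the same stopping-time construction inline together with Lemma~\ref{lem:pythagoras}.
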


Combining Theorem~\ref{thm:bilinear} and Theorem~\ref{thm:ge1}, we immediately have the following:

\begin{corollary}
Suppose $1/{p_1}+1/{p_2}+1/{p_3}\ge 1$.
Let $\sigma_i$ be a locally finite Borel measures on $\mathbb R^n$ for $i=1,2,3$ and $T(\,\cdot\,\sigma_1, \,\cdot\,\sigma_2)$ be as in \eqref{eq:TandTQbil}.
Then
\[
\| T(\,\cdot\,\sigma_1, \,\cdot\,\sigma_2)\|_{L^{p_1}(\sigma_1)\times L^{p_2}(\sigma_2)\rightarrow L^{p_3'}(\sigma_3)}\eqsim \sum_{(i,j,k)\in S_3}
  \mathfrak{W}_{T,(i,j,k)}.
\]
\end{corollary}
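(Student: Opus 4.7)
My plan is simply to chain the two results just established. Theorem~\ref{thm:bilinear}, applied to the operator $T$ of the dyadic form \eqref{eq:TandTQbil}, gives the norm equivalence
\[
\|T(\,\cdot\,\sigma_1,\,\cdot\,\sigma_2)\|_{L^{p_1}(\sigma_1)\times L^{p_2}(\sigma_2)\to L^{p_3'}(\sigma_3)} \eqsim \sum_{(i,j,k)\in S_3} \mathfrak{T}_{i,j,k}.
\]
The standing hypothesis $1/p_1+1/p_2+1/p_3 \geq 1$ places us within the scope of Theorem~\ref{thm:ge1}, which yields $\mathfrak{W}_{T,(i,j,k)} \eqsim \mathfrak{T}_{i,j,k}$ for every permutation $(i,j,k) \in S_3$ (with implicit constants depending only on $p_1,p_2,p_3$). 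Summing this equivalence over the six permutations and substituting into the norm equivalence from Theorem~\ref{thm:bilinear} gives
\[
\|T(\,\cdot\,\sigma_1,\,\cdot\,\sigma_2)\|_{L^{p_1}(\sigma_1)\times L^{p_2}(\sigma_2)\to L^{p_3'}(\sigma_3)} \eqsim \sum_{(i,j,k)\in S_3}\mathfrak{W}_{T,(i,j,k)},
\]
which is the claim.

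There is no genuine obstacle at this stage, since all of the substance has already been absorbed into the two theorems being invoked. Theorem~\ref{thm:bilinear} carries the sequential testing characterization of $\|T\|$, whose necessity was obtained in Section~\ref{subsec:nec-sequential} (by positivity and duality) and whose sufficiency was established via the sparse-stopping argument in Section~\ref{subsec:suf-sequential}. Theorem~\ref{thm:ge1} recasts each $\mathfrak{T}_{i,j,k}$ as the abstract Wolff functional $\mathfrak{W}_{T,(i,j,k)}$ through Lemma~\ref{lem:dyadicSup} and its iterates. One should note that the third subcase in the definition of $\mathfrak{W}_{T,(i,j,k)}$ (corresponding to $1/p_1+1/p_2+1/p_3 < 1$) is automatically excluded by the hypothesis; hence for each permutation the functional $\mathfrak{W}_{T,(i,j,k)}$ collapses unambiguously to either the supremum form (when $1/p_i+1/p_j\geq 1$) or the $L^{r_k}(\sigma_j)$-norm form (when $1/p_i+1/p_j<1\leq 1/p_i+1/p_j+1/p_k$), and no further case analysis is required.
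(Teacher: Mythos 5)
Your proposal is correct and matches the paper's own justification: the corollary is obtained immediately by combining the sequential testing characterization of Theorem~\ref{thm:bilinear} with the equivalence $\mathfrak{W}_{T,(i,j,k)}\eqsim\mathfrak{T}_{i,j,k}$ from Theorem~\ref{thm:ge1} and summing over permutations. Your remark that the hypothesis $1/p_1+1/p_2+1/p_3\geq 1$ excludes the third subcase in the definition of $\mathfrak{W}_{T,(i,j,k)}$ is a correct, if implicit, point in the paper as well.
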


\begin{proof}[Proof of Theorem~\ref{thm:ge1}]We note that $\mathfrak T_{i,j,k} \lesssim \|T\|$ is proven in Subsection \ref{subsec:nec-sequential} for a general positive bilinear operator $T$.

First, we prove that $\mathfrak{T}_{i,j,k}\lesssim \mathfrak{W}_{T,(i,j,k)}$.
If $\frac 1{p_i}+\frac 1{p_j}\ge 1$, then it is obvious that
\begin{eqnarray*}
\frac{\|T_{F_j}(\sigma_j,\sigma_k)\|_{L^{p_i'}(\sigma_i)}}{\sigma_j(F_j)^{1/p_j}\sigma_k(F_k)^{1/p_k}}
\le\frac{\|T_{F_j}(\sigma_j,\sigma_k)\|_{L^{p_i'}(\sigma_i)}}{\sigma_j(F_j)^{1/p_j}\sigma_k(F_j)^{1/p_k}}
\le \mathfrak{ W}_{T,(i,j,k)}.
\end{eqnarray*}
If $\frac 1{p_i}+\frac 1{p_j}<1$, then we have
\begin{equation*}
\begin{split}
 &\bigg\|\bigg\{ \frac{\Nnorm{T_{F_j}(\sigma_j,\sigma_k)}_{L^{p_i'}(\sigma_i)}}{\sigma_j(F_j)^{1/p_j}} \bigg\}_{F_j\in\cf_j : F_j\subseteq F_k}\bigg\|_{\ell^{r_k}}\\
&=\bigg(\sum_{F_j\in\cf_j\atop F_j\subset F_k}   \frac{\Nnorm{T_{F_j}(\sigma_j,\sigma_k)}_{L^{p_i'}(\sigma_i)}^{r_k}}{\sigma_j(F_j)^{r_k/{p_i'}}} \sigma_j(F_j)\bigg)^{1/{r_k}}
\lesssim \|W_{F_k,(i,j,k)}\|_{L^{r_k/{p_i'}}(\sigma_j)}^{1/{p_i'}}.
\end{split}
\end{equation*}
Since $r=\infty$, it follows that
\begin{equation*}
\begin{split}
&\left\|\bigg\{ \bigg\|\bigg\{ \frac{\|T_{F_j}(\sigma_j,\sigma_k)\|_{L^{p_i'}(\sigma_i)}}{\sigma_j(F_j)^{1/p_j}\sigma_k(F_k)^{1/p_k}} \bigg\}_{F_j\in\cf_j : F_j\subseteq F_k}\bigg\|_{\ell^{r_k}} \bigg\}_{F_k\in\cf_k}\right\|_{\ell^r}\\
&\lesssim \sup_{F_3\in\cf_3}\frac{\|W_{F_k,(i,j,k)}\|_{L^{r_k/{p_i'}}(\sigma_j)}^{1/{p_i'}}}{\sigma(F_k)^{1/{p_k}}}
\leq \mathfrak{W}_{T, (i,j,k)}.
\end{split}
\end{equation*}

Next, we prove that $\mathfrak{T}_{i,j,k}\gtrsim \mathfrak{W}_{T,(i,j,k)}$.
 If $\frac 1{p_i}+\frac 1{p_j}\ge 1$, it is obvious that $\mathfrak{W}_{T,(i,j,k)}\lesssim \mathfrak T_{i,j,k}$.
So we focus on the case $\frac 1{p_i}+\frac 1{p_j}< 1$. We shall show that
\[
\frac{\| W_{Q,(i,j,k)}^{1/{p_i'}} \|_{L^{r_k}(\sigma_j)}}{\sigma_k(Q)^{1/{p_k}}}\le C \mathfrak T_{i,j,k}
\]
for any fixed dyadic cube and the constant $C$ is independent of the choice of $Q$.  Of course, $\cf_k:=\{Q\}$ is $\sigma_k$-sparse.
Let $\cf_j$ be the stopping cubes starting at $Q$ and defined by
\begin{equation*}
  \operatorname{ch}_{\cf_j}(F):=\Big\{\text{maximal }F'\subsetneq F:\frac{\|T_{F'}( \sigma_j,  \sigma_k)\|_{L^{p_i'}(\sigma_i)}^{p_i'}}{\sigma_j(F')}
  >2\frac{\|T_{F}( \sigma_j,  \sigma_k)\|_{L^{p_i'}(\sigma_i)}^{p_i'}}{\sigma_j(F)}\Big\}.
\end{equation*}
Then it is easy to check that $\cf_j$ is $\sigma_j$-sparse and we have
\[
W_{Q,(i,j,k)}(x)\lesssim \sum_{F_j\in\cf_j}\frac{\|T_{F_j}( \sigma_j,  \sigma_k)\|_{L^{p_i'}(\sigma_i)}^{p_i'}}{\sigma_j(F_j)}  1_{F_j}.
\]
By Lemma \ref{lem:pythagoras},
\begin{equation*}
\begin{split}
&\frac{\| W_{Q,(i,j,k)}^{1/{p_i'}} \|_{L^{r_k}(\sigma_j)}}{\sigma_k(Q)^{1/{p_k}}}\\
&\lesssim \sigma_k(Q)^{-1/{p_k}}\Big( \sum_{F_j\in\cf_j}\frac{\|T_{F_j}( \sigma_j,  \sigma_k)\|_{L^{p_i'}(\sigma_i)}^{r_k}}{\sigma_j(F_j)^{r_k/{p_i'}}} \sigma(F_j)\Big)^{1/{r_k}}\\
&=\left\|\bigg\{ \bigg\|\bigg\{ \frac{\|T_{F_j}(\sigma_j,\sigma_k)\|_{L^{p_i'}(\sigma_i)}}{\sigma_j(F_j)^{1/p_j}\sigma_k(F_k)^{1/p_k}} \bigg\}_{F_j\in\cf_j : F_j\subseteq F_k}\bigg\|_{\ell^{r_k}} \bigg\}_{F_k\in\cf_k}\right\|_{\ell^r}
\lesssim\mathfrak T_{i,j,k}.
\end{split}
\end{equation*}
\end{proof}

\subsubsection{Case $\frac 1{p_1}+\frac 1{p_2}+\frac 1{p_3}<1$}


\begin{theorem}\label{thm:ge2}
Suppose $1/{p_1}+1/{p_2}+1/{p_3}<1$.
Let $T$ be a positive bilinear operator and $\tilde{\mathfrak T}_{i,j,k}$ and $W_{T,(i,j,k)}$ be defined as above. Then
\[
\mathfrak{W}_{{T,(i,j,k)}}:=\|W_{T,(i,j,k)}^{1/r_k}\|_{L^r(\sigma_k)}\eqsim   \tilde{\mathfrak T}_{i,j,k} \lesssim \| T\|.
\]
\end{theorem}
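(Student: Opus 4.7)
The plan is to iterate Lemma~\ref{lem:dyadicSup} at two nested levels, paralleling the nested sparse collections that define $\tilde{\mathfrak T}_{i,j,k}$. Since $\tilde{\mathfrak T}_{i,j,k}\lesssim\Nnorm{T}$ is already established for a general positive bilinear $T$ in Subsection~\ref{subsec:nec-sequential}, only the equivalence $\mathfrak W_{T,(i,j,k)}\eqsim\tilde{\mathfrak T}_{i,j,k}$ remains, and it will not use anything about $T$ beyond positivity and bilinearity (parallel to Proposition~\ref{prop:abstractWolff} in the linear case). All exponents are controlled by the two identities $1/p_i'=1/p_j+1/r_k$ and $1/r_k=1/p_k+1/r$, which imply $r_k>p_i'$ and $r>r_k$, and which give the rewritings $r_k/p_i'-1=r_k/p_j$ and $r/r_k-1=r/p_k$ needed to line up the resulting sparse sums with the definitions.

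For the inner level I would set $\tau_Q:=\Nnorm{T_Q(\sigma_j,\sigma_k)}_{L^{p_i'}(\sigma_i)}^{p_i'}$. In either admissible form of $T_Q$, the superadditivity \eqref{eq:tauSuperadditive} of $\tau$ follows from positivity and bilinearity of $T$ together with the pointwise inequality $\sum_\ell a_\ell^{p_i'}\leq(\sum_\ell a_\ell)^{p_i'}$, by the same reasoning as in Proposition~\ref{prop:abstractWolff}; the one mild subtlety is the localization $T_Q=1_QT(\cdot 1_Q,\cdot 1_Q)$, but there the disjointness of the $Q_\ell$ reduces the $\ell^{p_i'}$ estimate to a pointwise one before integration. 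Since $W_{F_k,(i,j,k)}=\sup_{Q'\subseteq F_k}1_{Q'}\tau_{Q'}/\sigma_j(Q')$, applying Lemma~\ref{lem:dyadicSup} on $(F_k,\sigma_j)$ with $s=r_k/p_i'$ and using $\sigma_j(F_j)^{1-r_k/p_i'}=\sigma_j(F_j)^{-r_k/p_j}$ will yield
\begin{equation*}
\Nnorm{W_{F_k,(i,j,k)}^{1/p_i'}}_{L^{r_k}(\sigma_j)}
\eqsim\sup_{\cf_j(F_k)}\BNorm{\Big\{\frac{\Nnorm{T_{F_j}(\sigma_j,\sigma_k)}_{L^{p_i'}(\sigma_i)}}{\sigma_j(F_j)^{1/p_j}}\Big\}_{F_j\in\cf_j(F_k)}}{\ell^{r_k}},
\end{equation*}
the supremum being taken over $\sigma_j$-sparse subfamilies of cubes contained in $F_k$.

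For the outer level I would set $\tilde\tau_Q:=\Nnorm{W_{Q,(i,j,k)}^{1/p_i'}}_{L^{r_k}(\sigma_j)}^{r_k}=\int W_{Q,(i,j,k)}^{r_k/p_i'}\ud\sigma_j$, so that $W_{T,(i,j,k)}=\sup_{Q\in\cd}1_Q\tilde\tau_Q/\sigma_k(Q)$. Superadditivity of $\tilde\tau$ is immediate from the definition of $W_{Q,(i,j,k)}$: for disjoint $Q_\ell\subseteq Q$ the functions $W_{Q_\ell,(i,j,k)}$ have disjoint supports $Q_\ell$ and are each pointwise dominated by $W_{Q,(i,j,k)}$, so $\sum_\ell W_{Q_\ell,(i,j,k)}^{r_k/p_i'}\leq W_{Q,(i,j,k)}^{r_k/p_i'}$ pointwise, and integration against $\sigma_j$ gives the claim. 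A second application of Lemma~\ref{lem:dyadicSup}, now on the full space with $\sigma_k$ and $s=r/r_k$, combined with the identity $r/r_k-1=r/p_k$, then yields
\begin{equation*}
\Nnorm{W_{T,(i,j,k)}^{1/r_k}}_{L^r(\sigma_k)}
\eqsim\sup_{\cf_k}\BNorm{\Big\{\frac{\Nnorm{W_{F_k,(i,j,k)}^{1/p_i'}}_{L^{r_k}(\sigma_j)}}{\sigma_k(F_k)^{1/p_k}}\Big\}_{F_k\in\cf_k}}{\ell^r}.
\end{equation*}
Substituting the inner equivalence and pulling the $F_k$-indexed suprema over $\cf_j(F_k)$ through the outer $\ell^r$-sum (they are chosen independently in $F_k$) produces exactly $\tilde{\mathfrak T}_{i,j,k}$, completing the proof. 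The only genuine obstacle to foresee is checking superadditivity in the subtler localization choice $T_Q=1_QT(\cdot 1_Q,\cdot 1_Q)$, but even there the argument is the same as for $T(\cdot 1_Q,\cdot 1_Q)$ with one extra factor $1_Q$ tracked through.
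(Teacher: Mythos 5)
Your argument is correct and follows essentially the same route as the paper's: the published proof is exactly a two-level stopping-time argument (an outer $\sigma_k$-stopping family built from $\|W_{F,(i,j,k)}^{1/p_i'}\|_{L^{r_k}(\sigma_j)}^{r_k}$ and, inside each stopping cube, a $\sigma_j$-stopping family built from $\|T_{F}(\sigma_j,\sigma_k)\|_{L^{p_i'}(\sigma_i)}^{p_i'}$, plus the easy sparseness direction for the reverse bound), which is precisely what your two invocations of Lemma~\ref{lem:dyadicSup} with the superadditive $\tau$ and $\tilde\tau$ encapsulate. The only difference is packaging: the paper unrolls the constructions by hand via Lemma~\ref{lem:pythagoras} where you cite Lemma~\ref{lem:dyadicSup} as a black box, and your explicit superadditivity verifications supply exactly the sparseness claims the paper dismisses as ``easy to check.''
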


By combining Theorem~\ref{thm:bilinear} with Theorem \ref{thm:ge2}, we have:

\begin{corollary}
Suppose $1/{p_1}+1/{p_2}+1/{p_3}< 1$.
Let $\sigma_i$ be a locally finite Borel measurse on $\mathbb R^n$ for $i=1,2,3$ and $T(\,\cdot\,\sigma_1, \,\cdot\,\sigma_2)$ be as in \eqref{eq:TandTQbil}.
Then
\[
\| T(\,\cdot\,\sigma_1, \,\cdot\,\sigma_2)\|_{L^{p_1}(\sigma_1)\times L^{p_2}(\sigma_2)\rightarrow L^{p_3'}(\sigma_3)}
\eqsim \sum_{(i,j,k)\in S_3}\|W_{T,(i,j,k)}^{1/r_k}\|_{L^r(\sigma_k)}.
\]
\end{corollary}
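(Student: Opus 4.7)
The plan is to obtain the stated equivalence by chaining together two results already established earlier in the paper. First, Theorem~\ref{thm:bilinear} gives, for any exponents $p_1,p_2,p_3\in(1,\infty)$, the two-sided bound
\[
  \|T(\,\cdot\,\sigma_1,\,\cdot\,\sigma_2)\|_{L^{p_1}(\sigma_1)\times L^{p_2}(\sigma_2)\rightarrow L^{p_3'}(\sigma_3)}
  \eqsim \sum_{(i,j,k)\in S_3} \tilde{\mathfrak T}_{i,j,k},
\]
where the sparse-sequential testing constants $\tilde{\mathfrak T}_{i,j,k}$ are defined as in that theorem. This reduces the problem to identifying each $\tilde{\mathfrak T}_{i,j,k}$ with the corresponding abstract Wolff potential norm $\|W_{T,(i,j,k)}^{1/r_k}\|_{L^r(\sigma_k)}$.

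Second, under the standing hypothesis $1/p_1+1/p_2+1/p_3 < 1$, the exponent $r$ defined in \eqref{def:sequentialexponents} is finite, and Theorem~\ref{thm:ge2} (applied to the positive bilinear operator $T$ from \eqref{eq:TandTQbil}) delivers the equivalence
\[
  \tilde{\mathfrak T}_{i,j,k} \eqsim \mathfrak W_{T,(i,j,k)} = \|W_{T,(i,j,k)}^{1/r_k}\|_{L^r(\sigma_k)}
\]
for every permutation $(i,j,k)\in S_3$. Note that Theorem~\ref{thm:ge2} is the only one of the three cases in the abstract Wolff potential discussion that matches the triple-exponent regime imposed here, so no case analysis is required.

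Summing the latter equivalence over $(i,j,k)\in S_3$ and inserting into the chain from Theorem~\ref{thm:bilinear} yields exactly the claimed two-sided bound. There is no genuine obstacle: the only substantive mathematics has already been carried out in the proofs of Theorems~\ref{thm:bilinear} and~\ref{thm:ge2}, and the corollary is obtained by a one-line composition of these two equivalences. For bookkeeping one simply notes that the implicit constants depend only on $p_1,p_2,p_3$, so that summing finitely many $\eqsim$-relations (over the six permutations in $S_3$) preserves the equivalence with uniform constants.
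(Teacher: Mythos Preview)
Your proposal is correct and matches the paper's approach exactly: the paper states this corollary immediately after Theorem~\ref{thm:ge2} with the single sentence ``By combining Theorem~\ref{thm:bilinear} with Theorem~\ref{thm:ge2}, we have:'' and gives no further proof. Your observation that the hypothesis $1/p_1+1/p_2+1/p_3<1$ forces $1/p_i+1/p_j<1$ for every pair (so that Theorem~\ref{thm:ge2} applies uniformly to all permutations) is a useful clarification that the paper leaves implicit.
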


\begin{proof}[Proof of Theorem~\ref{thm:ge2}]We note that $\tilde{\mathfrak T}_{i,j,k} \lesssim \|T\|$ is proven in Subsection \ref{subsec:nec-sequential} for a general positive bilinear operator $T$.

 By symmetry, we  just consider $(i,j,k)=(1,2,3)$. First, we prove that $ \tilde{\mathfrak T}_{1,2,3} \lesssim \|W_{T,(1,2,3)}^{1/r_3}\|_{L^r(\sigma_3)}$.We have
\begin{equation*}
\begin{split}
&\bigg\|\bigg\{ \frac{\Nnorm{T_{F_2}(\sigma_2,\sigma_3)}_{L^{p_1'}(\sigma_1)}}{\sigma_2(F_2)^{1/p_2}} \bigg\}_{F_2\in\cf_2(F_3)}\bigg\|_{\ell^{r_3}}\\
&=\bigg(\sum_{F_2\in\cf_2(F_3)}   \frac{\Nnorm{T_{F_2}(\sigma_2,\sigma_3)}_{L^{p_1'}(\sigma_1)}^{r_3}}{\sigma_2(F_2)^{r_3/{p_1'}}} \sigma_2(F_2)\bigg)^{1/{r_3}}
\lesssim \|W_{F_3,(1,2,3)}\|_{L^{r_3/{p_1'}}(\sigma_2)}^{1/{p_1'}}.
\end{split}
\end{equation*}
Therefore,
\begin{equation*}
\begin{split}
&\left\|\bigg\{ \bigg\|\bigg\{ \frac{\Nnorm{T_{F_2}(\sigma_2,\sigma_3)}_{L^{p_1'}(\sigma_1)}}{\sigma_2(F_2)^{1/p_2}\sigma_3(F_3)^{1/p_3}} \bigg\}_{F_2\in\cf_2(F_3)}\bigg\|_{\ell^{r_3}} \bigg\}_{F_3\in\cf_3}\right\|_{\ell^r}\\
&\lesssim \bigg( \sum_{F_3\in\cf_3} \frac{\|W_{F_3,(1,2,3)}\|_{L^{r_3/{p_1'}}(\sigma_2)}^{r/{p_1'}}}{\sigma(F_3)^{r/{p_3}}}    \bigg)^{1/r}\\
&= \bigg( \sum_{F_3\in\cf_3} \frac{\|W_{F_3,(1,2,3)}\|_{L^{r_3/{p_1'}}(\sigma_2)}^{r/{p_1'}}}{\sigma(F_3)^{r/{r_3}}}\sigma(F_3)    \bigg)^{1/r}
\lesssim \|  W_{T,(1,2,3)}^{1/{r_3}}  \|_{L^r(\sigma_3)}.
\end{split}
\end{equation*}

Next, we prove that $ \tilde{\mathfrak T}_{1,2,3} \gtrsim \|W_{T,(1,2,3)}^{1/r_3}\|_{L^r(\sigma_3)}$. For this, we may assume that the $\sup_{Q\in\mathcal{D}}$ in the definition of $W_{T,(i,j,j)}$ is replaced by $\sup_{Q\subseteq Q_N}$ for some big dyadic cube $Q_N$, as the original case then follows by monotone convergence as $Q_N$ exhausts all $\R^n$.

 Let $\cf_3$ be the stopping cubes starting at $Q_N$ and defined by
\begin{equation*}
  \operatorname{ch}_{\cf_3}(F):=\Big\{\text{maximal }F'\subsetneq F:\frac{\|W_{F',(1,2,3)}\|_{L^{r_3/{p_1'}}(\sigma_2)}^{r_3/{p_1'}}}{\sigma_3(F')}
  >2\frac{\|W_{F,(1,2,3)}\|_{L^{r_3/{p_1'}}(\sigma_2)}^{r_3/{p_1'}}}{\sigma_3(F)}\Big\}.
\end{equation*}
Then it is easy to check that $\cf_3$ is $\sigma_3$-sparse and we have
\[
W_{T,(1,2,3)}   \lesssim \sum_{F_3\in\cf_3}\frac{\|W_{F_3,(1,2,3)}\|_{L^{r_3/{p_1'}}(\sigma_2)}^{r_3/{p_1'}}}{\sigma_3(F_3)} 1_{F_3}
\]
By using Lemma \ref{lem:pythagoras},
\begin{eqnarray*}
\|  W_{T,(1,2,3)}^{1/{r_3}}  \|_{L^r(\sigma_3)}
&\lesssim& \Big( \sum_{F_3\in\cf_3}\frac{\|W_{F_3,(1,2,3)}\|_{L^{r_3/{p_1'}}(\sigma_2)}^{r/{p_1'}}}{\sigma_3(F_3)^{r/{r_3}}} \sigma(F_3) \Big)^{1/r}\\
&=& \Big( \sum_{F_3\in\cf_3}\frac{\|W_{F_3,(1,2,3)}\|_{L^{r_3/{p_1'}}(\sigma_2)}^{r/{p_1'}}}{\sigma_3(F_3)^{r/{p_3}}} \Big)^{1/r}.
\end{eqnarray*}
Next for each $F_3\in\cf_3$, we let $\cf_{2}(F_3)$ be the stopping cubes also starting at $F_3$ and defined by
\begin{equation*}
  \operatorname{ch}_{\cf_{2}(F_3)}(F):=\Big\{\text{maximal }F'\subsetneq F:\frac{\|T_{F'}( \sigma_2,  \sigma_3)\|_{L^{p_1'}(\sigma_1)}^{p_1'}}{\sigma_2(F')}
  >2\frac{\|T_{F}( \sigma_2,  \sigma_3)\|_{L^{p_1'}(\sigma_1)}^{p_1'}}{\sigma_2(F)}\Big\}.
\end{equation*}
Again, we know that $\cf_{2}(F_3)$ is $\sigma_2$-sparse. We have
\begin{eqnarray*}
W_{F_3,(1,2,3)}
&\le&
 \sum_{F_2\subset \cf_{2}(F_3)} \frac{\|T_{F_2}( \sigma_2,  \sigma_3)\|_{L^{p_1'}(\sigma_1)}^{p_1'}}{\sigma_2(F_2)} 1_{F_2}.
\end{eqnarray*}
By Lemma \ref{lem:pythagoras} again,
\begin{equation*}
\begin{split}
&\|  W_{T,(1,2,3)}^{1/{r_3}}  \|_{L^r(\sigma_3)}\\
&\lesssim \left\|\bigg\{ \bigg\|\bigg\{ \frac{\Nnorm{T_{F_2}(\sigma_2,\sigma_3)}_{L^{p_1'}(\sigma_1)}}{\sigma_2(F_2)^{1/p_2}\sigma_3(F_3)^{1/p_3}} \bigg\}_{F_2\in\cf_{2}(F_3)}\bigg\|_{\ell^{r_3}} \bigg\}_{F_3\in\cf_3}\right\|_{\ell^r}
\leq \tilde{\mathfrak T}_{1,2,3}.
\end{split}
\end{equation*}
This completes the proof.
\end{proof}

\section{Two weight testing condition for linearized maximal operators}
\subsection{Linear case}
Fix a (finite) collection $\cd$ of dyadic cubes. For non-negative real numbers $\lambda_Q$, the {\it maximal operator} $M^*(\,\cdot\,\sigma)$ is defined by
$$
M^*(f\sigma):=\sup_{Q\in\cd} \lambda_Q \int_Q f \dsigma 1_Q.
$$
Let $\mathcal{E}:=\{E(Q)\subseteq Q : Q\in\cd\}$ be a collection of pairwise disjoint sets.
The {\it linearized maximal operator} $M_{\mathcal{E}}(\,\cdot\,\sigma)$ and its localized version $M_{\mathcal{E},R}(\,\cdot\,\sigma)$ are defined by $$
M_{\mathcal{E}}(f\sigma):=\sum_Q \lambda_Q \int_Q f \dsigma\, 1_{E(Q)}\quad\text{and}\quad M_{\mathcal{E},R}(f\sigma):=\sum_{Q\subseteq R} \lambda_Q \int_Q f \dsigma\, 1_{E(Q)}.
$$
We notice that for each function $f$ there exists a collection $\mathcal{E}$ of pairwise disjoint sets $E(Q)\subseteq Q$ such that
$$
M^*(f\sigma)=M_{\mathcal{E}} (f\sigma);
$$
For example, we can choose
\begin{equation*}
\begin{split}
&E(Q):=\{x\in\br^d : M^*(f\sigma)(x)=\lambda_Q \int_Q f \dsigma, \text{ and }\\
&\phantom{E(Q):=\{x\in Q :} \lambda_{R} \int_R f \dsigma< \lambda_Q \int_Q f \dsigma \text{ whenever } R\supsetneq Q\}.
\end{split}
\end{equation*}
This implies:
\begin{lemma}We have
$$
\Norm{M^*(\cdot \sigma)}{L^q(\omega)\to L^p(\sigma)}=\sup_{\mathcal{E}} \Norm{M_\mathcal{E}(\,\cdot\,\sigma)}{L^q(\omega)\to L^p(\sigma)},
$$
where the supremum is over all collections $\mathcal{E}$ of pairwise disjoint sets $E(Q)\subseteq Q$.
\end{lemma}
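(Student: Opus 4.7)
The plan is to establish the claimed equality as a pair of inequalities, one coming from a pointwise domination and the other from the specific construction of $\mathcal{E}$ recorded just before the lemma.

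For the easy direction ``$\geq$'', I will note that for any non-negative $f$ and any admissible collection $\mathcal{E}=\{E(Q)\}$ of pairwise disjoint sets with $E(Q)\subseteq Q$, the pointwise inequality $M_{\mathcal{E}}(f\sigma)\leq M^*(f\sigma)$ holds. Indeed, at a point $x$ the sum defining $M_\mathcal{E}(f\sigma)(x)$ contains at most one non-zero term, corresponding to the unique $Q$ with $x\in E(Q)\subseteq Q$, and that term equals $\lambda_Q\int_Q f\ud\sigma\leq\sup_{R\ni x}\lambda_R\int_R f\ud\sigma=M^*(f\sigma)(x)$. Integrating in $L^q(\omega)$ and taking the supremum over $f$ and then over $\mathcal{E}$ yields $\sup_{\mathcal{E}}\Norm{M_\mathcal{E}(\,\cdot\,\sigma)}{}\leq\Norm{M^*(\,\cdot\,\sigma)}{}$.

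For the reverse direction ``$\leq$'', I will exploit that for each non-negative $f\in L^p(\sigma)$, the construction displayed above the lemma produces a collection $\mathcal{E}_f=\{E(Q)\}$, depending on $f$, for which $M^*(f\sigma)=M_{\mathcal{E}_f}(f\sigma)$ pointwise. Consequently
\begin{equation*}
  \Norm{M^*(f\sigma)}{L^q(\omega)}=\Norm{M_{\mathcal{E}_f}(f\sigma)}{L^q(\omega)}\leq\Norm{M_{\mathcal{E}_f}(\,\cdot\,\sigma)}{L^p(\sigma)\to L^q(\omega)}\Norm{f}{L^p(\sigma)},
\end{equation*}
which is bounded above by $\sup_{\mathcal{E}}\Norm{M_\mathcal{E}(\,\cdot\,\sigma)}{}\cdot\Norm{f}{L^p(\sigma)}$. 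Dividing through by $\Norm{f}{L^p(\sigma)}$ and taking the supremum over $f\geq 0$ (which suffices by positivity) gives the claim.

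The one small point to verify in the second step is that the sets $E(Q)$ produced by the displayed formula are indeed pairwise disjoint and that $M^*(f\sigma)=M_{\mathcal{E}_f}(f\sigma)$. Disjointness holds because two dyadic cubes containing a common point $x$ are nested, so the ``minimality'' clause $\lambda_R\int_R f\ud\sigma<\lambda_Q\int_Q f\ud\sigma$ for $R\supsetneq Q$ forces at most one $Q$ with $x\in E(Q)$. For the identity, finiteness of $\cd$ (assumed at the start of the section) ensures the supremum in $M^*(f\sigma)(x)$ is attained whenever it is non-zero, and the minimal attaining cube $Q$ then satisfies $x\in E(Q)$. There is essentially no obstacle here; the lemma is a routine bookkeeping consequence of the fact that a supremum of linear functionals can be realized as a linearization through an appropriate measurable selection, and the finite dyadic setting makes this selection transparent.
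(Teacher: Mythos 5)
Your argument is correct and is essentially the paper's own reasoning: the paper proves this lemma precisely by noting the pointwise bound $M_{\mathcal{E}}(f\sigma)\le M^*(f\sigma)$ in one direction and, in the other, by using the displayed construction of an $f$-dependent collection $\mathcal{E}_f$ with $M^*(f\sigma)=M_{\mathcal{E}_f}(f\sigma)$, exactly as you do. Your added verification of disjointness and of the attainment of the supremum (via finiteness of $\mathcal{D}$) only fills in details the paper leaves implicit.
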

The following theorem extends Sawyer's \cite[Theorem A]{sawyer1982} characterization of a two-weight norm inequality for maximal operators $M(\,\cdot\,\sigma):L^p(\sigma)\to L^q(w) $ in the case $q\geq  p$ to the case $q<p$.

\begin{theorem}\label{thm:maximalfunction}Let $\mathcal{E}$ be a collection of pairwise disjoint sets $E(Q)\subseteq Q$. For $p,q\in(1,\infty)$ and two measures $\sigma,\omega$, we have
\begin{equation*}
  \Norm{M_{\mathcal{E}}(\,\cdot\,\sigma)}{L^p(\sigma)\to L^q(\omega)}
  \eqsim_p \mathfrak{M}_{\mathcal{E},r},
\end{equation*}
where $r\in(1,\infty]$ is determined by $\displaystyle
  \frac{1}{r}=\Big(\frac{1}{q}-\frac{1}{p}\Big)_+$ and
\begin{equation*}
\begin{split}
  \mathfrak{M}_{\mathcal{E},r} & :=\sup_{\cf}\BNorm{\Big\{\frac{\Norm{M_{\mathcal{E},F}(\sigma)}{L^q(\omega)}}{\sigma(F)^{1/p}}\Big\}_{F\in\cf}}{\ell^r},
\end{split}
\end{equation*}
where the supremum is over all subcollections $\cf$ that are sparse with respect to $\sigma$.
\end{theorem}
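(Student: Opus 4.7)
Necessity, $\mathfrak{M}_{\mathcal{E},r}\lesssim_p\Norm{M_{\mathcal{E}}(\,\cdot\,\sigma)}{L^p(\sigma)\to L^q(\omega)}$, is immediate from Corollary~\ref{cor:necessityseq} applied to the positive bounded linear operator $M_{\mathcal{E}}(\,\cdot\,\sigma):L^p(\sigma)\to L^q(\omega)$, combined with the pointwise estimate $M_{\mathcal{E},F}(\sigma)\leq M_{\mathcal{E}}(1_F\sigma)$ (which holds since $\sigma(Q\cap F)=\sigma(Q)$ whenever $Q\subseteq F$ in the dyadic grid, and all summands are nonnegative).

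The key structural simplification for sufficiency is the \emph{disjointness} of the sets $\{E(Q):Q\in\cd\}$: at each point the defining sum of $M_{\mathcal{E}}(f\sigma)$ has at most one nonzero summand, whence
\[
  \Norm{M_{\mathcal{E}}(f\sigma)}{L^q(\omega)}^q
  =\sum_{Q\in\cd}\lambda_Q^q(\langle f\rangle_Q^\sigma)^q\sigma(Q)^q\omega(E(Q)),
\]
and similarly for $\Norm{M_{\mathcal{E},F}(\sigma)}{L^q(\omega)}^q$ with the sum restricted to $Q\subseteq F$. Consequently no dual testing enters the argument, and the bilateral decomposition of Lemma~\ref{lem:linearCore} is replaced by a one-sided stopping scheme.

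Fix $R\in\cd$ and let $\cf$ be the $\sigma$-sparse stopping family starting at $R$ with the standard doubling rule $\ch_{\cf}(F):=\{\text{maximal }F'\subsetneq F:\langle f\rangle_{F'}^\sigma>2\langle f\rangle_F^\sigma\}$, so that $\langle f\rangle_Q^\sigma\leq 2\langle f\rangle_{\pi_{\cf}Q}^\sigma$ for every $Q\subseteq R$. Grouping the identity above by the stopping parent $\pi_\cf Q$ and enlarging the inner sum to all $Q\subseteq F$ yields
\[
  \Norm{M_{\mathcal{E},R}(f\sigma)}{L^q(\omega)}^q
  \leq 2^q\sum_{F\in\cf}\big((\langle f\rangle_F^\sigma)^p\sigma(F)\big)^{q/p}\cdot\Big(\frac{\Norm{M_{\mathcal{E},F}(\sigma)}{L^q(\omega)}}{\sigma(F)^{1/p}}\Big)^q.
\]

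When $p>q$, apply H\"older's inequality with the conjugate exponents $p/q$ and $r/q$ (conjugate precisely because $1/r=1/q-1/p$); the first factor is controlled by the Carleson embedding of Lemma~\ref{lm:carleson}, giving $\Norm{f}{L^p(\sigma)}^q$, and the second by $\mathfrak{M}_{\mathcal{E},r}^q$ by definition. When $p\leq q$ (so $r=\infty$), estimate the second factor uniformly by $\mathfrak{M}_{\mathcal{E},\infty}^q$ and use the elementary bound $\sum_F x_F^{q/p}\leq(\sum_F x_F)^{q/p}$, valid for nonnegative $x_F$ and $q/p\geq 1$, to again reduce to Carleson. Monotone convergence as $R\uparrow\R^n$ finishes the proof. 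No essential obstacle arises; the argument is effectively the sufficiency half of Theorem~\ref{thm:seqTesting}, with the disjointness of $\{E(Q)\}$ collapsing the two-sided testing into a one-sided one.
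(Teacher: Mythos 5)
Your proposal is correct and follows essentially the same route as the paper's proof: necessity via Corollary~\ref{cor:necessityseq} (with the pointwise bound $M_{\mathcal{E},F}(\sigma)\leq M_{\mathcal{E}}(1_F\sigma)$), and sufficiency via the disjointness identity for $\Norm{M_{\mathcal{E}}(f\sigma)}{L^q(\omega)}^q$, a doubling-type stopping family, H\"older with exponents $p/q$ and $r/q$ (respectively the $\ell^\infty$ bound when $p\leq q$), and the Carleson embedding Lemma~\ref{lm:carleson}. As a minor aside, your identity correctly carries $\lambda_Q^q$, whereas the paper's displayed computation writes $\lambda_Q$ there, which is only a typo.
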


\begin{proof}The necessity of sequential testing holds for all linear, positive operators by Corollary \ref{cor:necessityseq}. We next prove the sufficiency. In the proof we suppress the dependence on the collection $\mathcal{E}$.
\begin{equation*}
\begin{split}
\Norm{M(f\sigma)}{L^q(w)}^q&=\sum_{Q} \lambda_Q \big(\int_Q f \dsigma\big)^q\, w(E(Q))\\
&=\sum_{Q} \lambda_Q \big(\langle f \rangle_Q^\sigma\big)^q \sigma(Q)^q\, w(E(Q))\\
&=\sum_F \sum_{Q: \pi (Q)=F} \lambda_Q \big(\langle f \rangle_Q^\sigma\big)^q \sigma(Q)^q\, w(E(Q))\\
&\leq 2^q \sum_F \big(\langle f \rangle_F^\sigma\big)^q  \sum_{Q: \pi(Q)=F} \lambda_Q \sigma(Q)^q\, w(E(Q))\\
&\leq 2^q \sum_F \big(\langle f \rangle_F^\sigma \sigma(F)^{1/p}\big)^q  \Big(\frac{\Norm{M_F(\sigma)}{L^q(w)}}{\sigma(F)^{1/p}}\Big)^q
\end{split}
\end{equation*}
In the case $1<q<p<\infty$, applying H\"older's inequality with the exponents $(r/q)'=p/q$ and $r/q$ yields
\begin{equation*}
\begin{split}
&\sum_F \big(\langle f \rangle_F^\sigma \sigma(F)^{1/p})^q  \big(\frac{\Norm{M_F(\sigma)}{L^q(w)}}{\sigma(F)^{1/p}}\big)^q\\
&\leq \Big(\sum_F \big(\langle f \rangle_Q^\sigma\big)^p \sigma(F)\Big)^{q/p} \BNorm{\Big\{\frac{\Norm{M_F(\sigma)}{L^q(w)}}{\sigma(F)^{1/p}}\Big\}_{F\in\cf}}{\ell^r}^q.
\end{split}
\end{equation*}
In the case  $1<p<q<\infty$, applying the inequality $\Norm{\,\cdot\,}{\ell^{s_1}}\geq \Norm{\,\cdot\,}{\ell^{s_2}}$ for $0<s_1\leq s_2\leq\infty$ yields
\begin{equation*}
\begin{split}
&\sum_F \big((\langle f \rangle_F^\sigma )^p\sigma(F)\big)^{q/p}  \big(\frac{\Norm{M_F(\sigma)}{L^q(w)}}{\sigma(F)^{1/p}}\big)^q\\
&\leq \BNorm{\Big\{\frac{\Norm{M_F(\sigma)}{L^q(w)}}{\sigma(F)^{1/p}}\Big\}_{F\in\cf}}{\ell^\infty}^q \Big(\sum_F \big(\langle f \rangle_Q^\sigma\big)^p \sigma(F)\Big)^{q/p}.
\end{split}
\end{equation*}
The proof is completed by the special case of the Carleson embedding theorem, Lemma \ref{lm:carleson}.
\end{proof}
\subsection{Bilinear case}
Fix a (finite) collection $\cd$ of dyadic cubes. For non-negative real numbers $\lambda_Q$, we define the {\it bilinear maximal operator} $\mathcal M^*(\,\cdot\,\sigma_1, \,\cdot\,\sigma_2)$ by
$$
\mathcal M^*(f_1\sigma_1, f_2\sigma_2):=\sup_{Q\in\cd} \lambda_Q \int_Q f_1 \dsigma \int_Q f_2 \dsigma 1_Q.
$$
Analogous to the linear case, we can also define the collection $\mathcal E$ and the corresponding operator $\mathcal M _{\mathcal E}$ and its localized version $\mathcal M_{\mathcal E, R}$.
We prove the following result
\begin{theorem}
Let $\mathcal{E}$ be a collection of pairwise disjoint sets $E(Q)\subseteq Q$. Let $p_1,p_2, q\in(1,\infty)$ and
 $\sigma_1, \sigma_2,\omega$ be measures. We have
\begin{equation*}
  \Norm{M_{\mathcal{E}}(\,\cdot\,\sigma_1, \,\cdot\,\sigma_2)}{L^{p_1}(\sigma_1)\times L^{p_2}(\sigma_2)\to L^q(\omega)}
  \eqsim  \mathfrak{M}_{\mathcal{E},r_1,r}+ \mathfrak{M}_{\mathcal{E},r_2,r},
\end{equation*}
where $r_1, r_2, r\in(1,\infty]$ are determined by
\begin{eqnarray*}
  \frac 1 {r_i}&=& \Big( \frac 1 q- \frac 1 {p_i} \Big)_+,\qquad i=1,2,\\
  \frac{1}{r}&=&\Big(\frac{1}{q}-\frac{1}{p_1}-\frac 1{p_2}\Big)_+
\end{eqnarray*}
and
\begin{equation*}
\begin{split}
  \mathfrak{M}_{\mathcal{E},r_1, r} & :=\sup_{\cf_1, \cf_2}
   \Big\|\Big\{\Big\|\Big\{\frac{\|\mathcal M_{\mathcal E, F_1}(\sigma_1, \sigma_2)\|_{L^q(\omega)}}{\sigma_1(F_1)^{1/{p_1}}\sigma_2(F_2)^{1/{p_2}}}\Big\}_{F_1\subset F_2}\Big\|_{\ell^{r_1}}\Big\}_{F_2}\Big\|_{\ell^r},
\end{split}
\end{equation*}
where the supremum is over all $\sigma_1$-sparse subcollections $\cf_1$ and $\sigma_2$-sparse subcollections $\cf_2$ of the dyadic cubes $\mathcal{D}$.
\end{theorem}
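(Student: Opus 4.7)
My plan is to follow the stopping-time and iterated-H\"older strategy of Subsection~\ref{subsec:suf-sequential}, one level shallower since we stop only for the two inputs $f_1,f_2$ and not for a dual output function (the $L^q(\omega)$-norm already decouples nicely once taken to the $q$-th power). The necessity will follow essentially for free from the necessity of sequential testing for positive bilinear operators established in Subsection~\ref{subsec:nec-sequential}, together with the observation that $M_{\mathcal E}$ is itself a positive bilinear operator.

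\medskip

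\emph{Necessity.} I would first identify the output pair $(\omega,q)$ with $(\sigma_3,p_3')$ for $p_3:=q'$, so that $M_{\mathcal E}:L^{p_1}(\sigma_1)\times L^{p_2}(\sigma_2)\to L^{p_3'}(\sigma_3)$ is a positive bilinear operator in the sense of Subsection~\ref{subsec:nec-sequential}. A direct comparison of definitions then yields
\[
  \mathfrak M_{\mathcal E,r_1,r}=\mathfrak T_{3,1,2}\leq\tilde{\mathfrak T}_{3,1,2},
  \qquad
  \mathfrak M_{\mathcal E,r_2,r}=\mathfrak T_{3,2,1}\leq\tilde{\mathfrak T}_{3,2,1},
\]
where the sequential-testing exponents in Theorem~\ref{thm:bilinear} reduce, under the identification $p_3=q'$, to precisely the exponents $r_1,r_2,r$ of the present theorem. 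The bound $\tilde{\mathfrak T}_{i,j,k}\lesssim\Nnorm{T}$ from Subsection~\ref{subsec:nec-sequential} then immediately gives the desired necessity bounds.

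\medskip

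\emph{Sufficiency.} Starting from
\[
  \Norm{M_{\mathcal E}(f_1\sigma_1,f_2\sigma_2)}{L^q(\omega)}^q
  =\sum_{Q\in\cd}\lambda_Q^q\sigma_1(Q)^q\sigma_2(Q)^q(\ave{f_1}_Q^{\sigma_1})^q(\ave{f_2}_Q^{\sigma_2})^q\omega(E(Q)),
\]
I would construct $\sigma_i$-sparse stopping families $\cf_i$ ($i=1,2$) via the standard doubling stopping rule $\ave{f_i}_{F'}^{\sigma_i}>2\ave{f_i}_F^{\sigma_i}$. Using $\ave{f_i}_Q^{\sigma_i}\le 2\ave{f_i}_{\pi_{\cf_i}(Q)}^{\sigma_i}$ and rearranging the $Q$-sum by stopping-parent pairs $(F_1,F_2)\in\cf_1\times\cf_2$, each contributing $Q$ forces $F_1$ and $F_2$ to be nested. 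By symmetry it suffices to treat $F_1\subseteq F_2$; as in Subsection~\ref{subsec:suf-sequential}, this case additionally forces $F_2=\pi_{\cf_2}(F_1)$, and the inner $Q$-sum is majorized by $\Nnorm{\mathcal M_{\mathcal E,F_1}(\sigma_1,\sigma_2)}_{L^q(\omega)}^q$. The problem then reduces to estimating
\[
  \sum_{F_2\in\cf_2}\tilde v_{F_2}^{\,q}\sum_{F_1\in\cf_1(F_2)}\tilde u_{F_1}^{\,q}\,a_{F_1,F_2}^{\,q},
\]
with $\tilde u_{F_1}:=\ave{f_1}_{F_1}^{\sigma_1}\sigma_1(F_1)^{1/p_1}$, $\tilde v_{F_2}:=\ave{f_2}_{F_2}^{\sigma_2}\sigma_2(F_2)^{1/p_2}$, $\cf_1(F_2):=\{F_1\in\cf_1:\pi_{\cf_2}(F_1)=F_2\}$, and $a_{F_1,F_2}$ the normalized testing quantity. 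I would then apply H\"older iteratively: inner over $F_1$ with the pair of exponents $(p_1/q,r_1/q)$ (falling back on $\ell^{p_1}\hookrightarrow\ell^q$ when $r_1=\infty$), and outer over $F_2$ with the triple $(p_2/q,p_1/q,r/q)$, absorbing any slack through $\ell^p$-nesting exactly as in Subsection~\ref{subsec:suf-sequential}. The three emerging factors are then bounded by $\Norm{f_2}{L^{p_2}(\sigma_2)}^q$ (Lemma~\ref{lm:carleson} applied to the $\sigma_2$-sparse $\cf_2$), by $\mathfrak M_{\mathcal E,r_1,r}^q$ (definition), and by $\Norm{f_1}{L^{p_1}(\sigma_1)}^q$ (using that the collections $\{\cf_1(F_2)\}_{F_2\in\cf_2}$ partition $\cf_1$, so $\sum_{F_2}\sum_{F_1\in\cf_1(F_2)}\tilde u_{F_1}^{p_1}=\sum_{F_1\in\cf_1}\tilde u_{F_1}^{p_1}\lesssim\Norm{f_1}{L^{p_1}(\sigma_1)}^{p_1}$). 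The symmetric case $F_2\subseteq F_1$ produces the $\mathfrak M_{\mathcal E,r_2,r}$ contribution, completing the sufficiency.

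\medskip

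\emph{Main obstacle.} The most delicate point will be the regime where some $p_i<q$, in which case $p_i/q<1$ makes the outer three-factor generalized H\"older inadmissible as it stands in the $q$-th power form. Fortunately, this regime automatically forces $r=r_i=\infty$, so the relevant testing constant degenerates to a single-scale Sawyer-type supremum; the missing estimate can then be recovered by pulling out the inner factor $A_{F_2}:=(\sum_{F_1\in\cf_1(F_2)}\tilde u_{F_1}^{p_1})^{1/p_1}\lesssim\Norm{f_1}{L^{p_1}(\sigma_1)}$ via its global Carleson bound (uniformly in $F_2$) and invoking a two-factor H\"older on the residual sum, mirroring the analogous degenerate step in the proof of Theorem~\ref{thm:maximalfunction}.
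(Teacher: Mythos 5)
Your route coincides with the paper's: necessity by specializing the general necessity of sequential testing for positive bilinear operators from Subsection~\ref{subsec:nec-sequential} (with $\sigma_3=\omega$, $p_3=q'$, under which the exponents of Theorem~\ref{thm:bilinear} become the present $r_1,r_2,r$, and using $\mathcal M_{\mathcal E,F}(\sigma_1,\sigma_2)\le \mathcal M_{\mathcal E}(1_F\sigma_1,1_F\sigma_2)$), and sufficiency by expanding $\Norm{\mathcal M_{\mathcal E}(f_1\sigma_1,f_2\sigma_2)}{L^q(\omega)}^q$ over the disjoint sets $E(Q)$, passing to stopping parents, majorizing the inner $Q$-sum by $\Norm{\mathcal M_{\mathcal E,F_1}(\sigma_1,\sigma_2)}{L^q(\omega)}^q$, and iterating H\"older with Lemma~\ref{lm:carleson}; this is exactly the paper's proof, and your main body is correct.

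The one flaw is the patch in your ``main obstacle'' paragraph, and it concerns a difficulty that is not actually there. In the mixed regime $p_1<q<p_2$ (so $r_1=r=\infty$ but $p_2/q>1$), pulling out the inner factor $A_{F_2}\lesssim\Norm{f_1}{L^{p_1}(\sigma_1)}$ uniformly in $F_2$ leaves $\sum_{F_2}\tilde v_{F_2}^{\,q}D_{F_2}^{\,q}$, with $\tilde v_{F_2}=\ave{f_2}_{F_2}^{\sigma_2}\sigma_2(F_2)^{1/p_2}$ and $D_{F_2}$ the normalized testing quantity; a two-factor H\"older with exponents $(p_2/q,(p_2/q)')$ then demands control of $\Norm{\{D_{F_2}\}_{F_2}}{\ell^{q(p_2/q)'}}$ with $q(p_2/q)'<\infty$, while the hypothesis only furnishes the $\ell^{r}=\ell^\infty$ bound; and if you instead sup out $D_{F_2}$, you are left needing $\sum_{F_2}\tilde v_{F_2}^{\,q}\le(\sum_{F_2}\tilde v_{F_2}^{\,p_2})^{q/p_2}$, which fails for $p_2>q$. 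The repair is to keep the $F_2$-dependent quantity $B_{F_2}:=\sum_{F_1\in\cf_1(F_2)}(\ave{f_1}_{F_1}^{\sigma_1})^{p_1}\sigma_1(F_1)$ rather than its uniform majorant: after supping out $D_{F_2}$, apply H\"older over $F_2$ with $(p_2/q,(p_2/q)')$ and use the nesting $\Norm{\{B_{F_2}^{q/p_1}\}_{F_2}}{\ell^{(p_2/q)'}}\le\big(\sum_{F_2}B_{F_2}\big)^{q/p_1}$, which is legitimate precisely because $r=\infty$, i.e.\ $1/q\le 1/p_1+1/p_2$; then $\sum_{F_2}B_{F_2}=\sum_{F_1\in\cf_1}(\ave{f_1}_{F_1}^{\sigma_1})^{p_1}\sigma_1(F_1)\lesssim\Norm{f_1}{L^{p_1}(\sigma_1)}^{p_1}$ by Lemma~\ref{lm:carleson}. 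Equivalently, the three-factor estimate of your main sufficiency paragraph is admissible in every regime: the reciprocals $q/p_2+q/p_1+q/r$ always sum to at least one, and monotonicity of the $\ell^s$ (quasi)norms, valid also for $s<1$, allows any exponent below one to be raised to an admissible value before applying H\"older --- which is how the paper's single chain of inequalities is to be read, with no case distinction needed.
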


\begin{proof}
The necessity follows from the arguments in Subsection~\ref{subsec:nec-sequential}. We focus only on the sufficiency.
We have
\begin{eqnarray*}
&&\|\mathcal M_{\mathcal E}(f_1 \sigma_1, f_2\sigma_2)\|_{L^q(\omega)}^q\\
&=& \sum_{Q} \lambda_Q^q \Big( \int_Q f_1\dsigma_1\int_Q f_2\dsigma_2\Big)^q \omega(E(Q))\\
&=& \sum_{F_1, F_2} \sum_{\pi(Q)=(F_1, F_2)}\lambda_Q^q (\langle f_1\rangle_Q^{\sigma_1}\langle f_2\rangle_Q^{\sigma_2})^q\sigma_1(Q)^q \sigma_2(Q)^q\omega(E(Q))\\
&\le&4^q \sum_{F_1, F_2} (\langle f_1\rangle_{F_1}^{\sigma_1}\langle f_2\rangle_{F_2}^{\sigma_2})^q\sum_{\pi(Q)=(F_1, F_2)} \lambda_Q^q \sigma_1(Q)^q \sigma_2(Q)^q\omega(E(Q))\\
&:=&4^q( \Sigma_{F_1\subset F_2}+ \Sigma_{F_2\subset F_1}).
\end{eqnarray*}
By symmetry, we only need to estimate the first term.
By similar arguments as that in Subsection~\ref{subsec:suf-sequential}, we know that $\pi_{\cf_2}(F_1)=F_2$.
Therefore,
\begin{eqnarray*}
\Sigma_{F_1\subset F_2}&\le& \sum_{F_2}(\langle f_2\rangle_{F_2}^{\sigma_2})^q \sum_{F_1: \pi_{\cf_2}(F_1)=F_2}
(\langle f_1\rangle_{F_1}^{\sigma_1})^q \|\mathcal M_{\mathcal E, F_1}(\sigma_1, \sigma_2)\|_{L^q(\omega)}^q\\
&\le& \sum_{F_2}(\langle f_2\rangle_{F_2}^{\sigma_2})^q \Big(\sum_{F_1: \pi_{\cf_2}(F_1)=F_2}(\langle f_1\rangle_{F_1}^{\sigma_1})^{p_1}\sigma(F_1)\Big)^{q/{p_1}}\\
&&\quad\times\Big\|\Big\{\frac{\|\mathcal M_{\mathcal E, F_1}(\sigma_1, \sigma_2)\|_{L^q(\omega)}}{\sigma_1(F_1)^{1/{p_1}}}\Big\}_{F_1\subset F_2}\Big\|_{\ell^{r_1}}^q\\
&\le& \Big(\sum_{F_2}(\langle f_2\rangle_{F_2}^{\sigma_2})^{p_2}\sigma_2(F_2)\Big)^{q/{p_2}}
 \Big(\sum_{F_2}\sum_{F_1: \pi_{\cf_2}(F_1)=F_2}(\langle f_1\rangle_{F_1}^{\sigma_1})^{p_1}\sigma(F_1)\Big)^{q/{p_1}}\\
 &&\quad\times
 \Big\|\Big\{\Big\|\Big\{\frac{\|\mathcal M_{\mathcal E, F_1}(\sigma_1, \sigma_2)\|_{L^q(\omega)}}{\sigma_1(F_1)^{1/{p_1}}\sigma_2(F_2)^{1/{p_2}}}\Big\}_{F_1\subset F_2}\Big\|_{\ell^{r_1}}\Big\}_{F_2}\Big\|_{\ell^r}^q\\
 &\le& \mathfrak M_{\mathcal E, r_1, r}^q \|f_1\|_{L^{p_1}(\sigma_1)}^q \|f_2\|_{L^{p_2}(\sigma_2)}^q.
\end{eqnarray*}
\end{proof}
\section{An application to the bilinear fractional integrals}
Recall that the bilinear fractional integral $\mathcal{I}_\alpha$, for $\alpha\in (0,2n)$,  is defined by
\begin{equation}
\label{def:bilinearfractional}\mathcal I_\alpha(f_1 \sigma_1, f_2\sigma_2)(x):=\int_{\mathbb R^{2n}}\frac{f_1(y_1)f_2(y_2)}{(|x-y_1|+|x-y_2|)^{2n-\alpha}}\dsigma_1(y_1)\dsigma_2(y_2).
\end{equation}
In \cite{li2013}, the third author and Sun showed that
\begin{equation}\label{eq:pointwise}
  \mathcal I_\alpha(f_1 \sigma_1, f_2\sigma_2)(x)\eqsim
  \sum_{t\in\{0, 1/3\}^n}\mathcal I_\alpha^{\mathcal D^t}(f_1\sigma_1, f_2\sigma_2)(x),
\end{equation}
where
\[
\mathcal I_\alpha^{\mathcal D^t}(f_1\sigma_1, f_2\sigma_2)(x):=\sum_{Q\in\mathcal D^t}
\prod_{i=1}^2\frac 1{|Q|^{1-\alpha/{2n}}}\int_{Q}f_i\dsigma_i  1_Q(x)
\]
and, for each $t\in \{0, 1/3\}^n$,
\begin{equation*}\label{def:shifteddyadicsystem}
\mathcal D^t:=\{2^{-k}([0,1)^n+m+(-1)^k t): k\in \mathbb Z, m\in\mathbb Z^n\}.
\end{equation*}
We also define the localized operator $\mathcal I_{\alpha, Q}$ to be $\mathcal I_\alpha(\cdot 1_Q , \cdot 1_Q  )$
 or $ 1_Q \mathcal I_\alpha(\cdot 1_Q  , \cdot  1_Q )$.

For each permutation $(i,j,k)$ of $(1,2,3)$, the sequential testing constant $\mathfrak{T}^{\cd^t}_{\mathcal{I}_\alpha,(i,j,k)}$ and  the abstract Wolff potential constant $\mathfrak{W}^{\cd^t}_{\mathcal{I}_\alpha,(i,j,k)}$ are defined as before,  expect that we now use severel different dyadic systems $\cd^t$ and indicate this by the superscript ``${\cd^t}$'' in the notation.

Next, we recall the definitions for reader's convenience. The constant $\mathfrak{T}^{\cd^t}_{\mathcal{I}_\alpha,(i,j,k)}$ is defined by
\[
\mathfrak T_{\mathcal{I}_\alpha,{(i,j,k)}}^{\cd^t}:=\sup_{\cf_j, \cf_k}\left\|\bigg\{ \bigg\|\bigg\{ \frac{\|\mathcal I_{\alpha, F_j}(\sigma_j,\sigma_k)\|_{L^{p_i'}(\sigma_i)}}{\sigma_j(F_j)^{1/p_j}\sigma_k(F_k)^{1/p_k}} \bigg\}_{F_j\in\cf_j : F_j\subseteq F_k}\bigg\|_{\ell^{r_k}} \bigg\}_{F_k\in\cf_k}\right\|_{\ell^r},
\]
where $\cf_j, \cf_k$ are $\sigma_j, \sigma_k$-sparse subcollections of the dyadic system $\mathcal D^t$, respectively. The constant $\mathfrak{W}^{\cd^t}_{\mathcal{I}_\alpha,(i,j,k)}$ is defined by cases:
\begin{itemize}
\item Case $\frac 1{p_i}+\frac 1{p_j}\ge 1$:
$$
\mathfrak{W}^{\cd^t}_{\mathcal I_\alpha, (i,j,k)}:=
\sup_{Q\in\cd^t}\frac{\|\mathcal I_{\alpha, Q}( \sigma_j,  \sigma_k)\|_{L^{p_i'}(\sigma_i)}}
{\sigma_j(Q)^{1/{p_j}}\sigma_k(Q)^{1/{p_k}}}.$$
\end{itemize}
For the case $\frac 1{p_i}+\frac 1{p_j}< 1$, we define the auxiliary function
$$
 W^{\cd^t}_{Q,(i,j,k)}:=\sup_{\substack{Q'\in\cd^t:\\Q'\subseteq Q}}\frac{1_{Q'}}{\sigma_j(Q')}\|\mathcal I_{\alpha, Q'}( \sigma_j,  \sigma_k)\|_{L^{p_i'}(\sigma_i)}^{p_i'}.
$$
\begin{itemize}
\item Case $\frac 1{p_i}+\frac 1{p_j}< 1\text{ and } \frac 1{p_i}+\frac 1{p_j}+\frac 1{p_k}\geq 1$:
$$
\mathfrak{W}^{\cd^t}_{\mathcal{I}_\alpha,(i,j,k)}:= \sup_{Q\in\cd^t} \frac{\| (W^{\cd^t}_{Q,(i,j,k)})^{1/{p_i'}} \|_{L^{r_k}(\sigma_j)}}{\sigma_k(Q)^{1/{p_k}}} 
$$
\item Case $\frac 1{p_i}+\frac 1{p_j}< 1\text{ and } \frac 1{p_i}+\frac 1{p_j}+\frac 1{p_k}< 1$:
\begin{equation*}
\begin{split}
&   W^{\cd^t}_{\mathcal I_\alpha,(i,j,k)}:=\sup_{Q\in\cd^t}\frac{1_Q}{\sigma_k(Q)}\|(W^{\cd^t}_{Q,(i,j,k)})^{1/{p_i'}}\|_{L^{r_k}(\sigma_j)}^{r_k}, \\
&\mathfrak{W}^{\cd^t}_{\mathcal{I}_\alpha,(i,j,k)}:= \|(W^{\cd^t}_{\mathcal{I}_\alpha,(i,j,k)})^{1/r_k}\|_{L^r(\sigma_k)} .
\end{split}
\end{equation*}
\end{itemize}
\begin{theorem}Let $\alpha\in(0,2n)$. Let $\mathcal{I_\alpha}$ be the bilinear fractional integral defined in \eqref{def:bilinearfractional}. For $1<p_1,p_2,p_3<\infty$, and locally finite Borel measures $\sigma_1,\sigma_2,\sigma_3$ on $\br^n$, we have
\begin{equation*}
\begin{split}
&\|\mathcal I_\alpha (\,\cdot\,\sigma_1, \,\cdot\,\sigma_2)\|_{L^{p_1}(\sigma_1)\times L^{p_2}(\sigma_2)\rightarrow L^{p_3'}(\sigma_3)}\\
&\eqsim  \sum_{t\in\{0,1/3\}^n}\sum_{(i,j,k)\in S_3}
  \mathfrak{W}^{\cd^t}_{\mathcal I_\alpha,(i,j,k)}\\
  &\eqsim \sum_{t\in \{0, 1/3\}^n}\sum_{(i,j,k)\in S_3} \mathfrak T_{\mathcal{I}_\alpha,{(i,j,k)}}^{\cd^t}.
\end{split}
\end{equation*}

\end{theorem}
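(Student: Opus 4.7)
The strategy is to transfer the results from the dyadic operators to the continuous bilinear fractional integral $\mathcal{I}_\alpha$ via the pointwise equivalence \eqref{eq:pointwise}. Note that for each $t\in\{0,1/3\}^n$, the operator $\mathcal{I}_\alpha^{\cd^t}$ is exactly of the dyadic form \eqref{eq:TandTQbil}, with coefficients $\lambda_Q=|Q|^{\alpha/n-2}$ and grid $\cd^t$. Therefore the characterizations of Theorem~\ref{thm:bilinear} together with Theorems~\ref{thm:ge1} and \ref{thm:ge2} apply to $\mathcal{I}_\alpha^{\cd^t}$, producing sequential testing and Wolff potential constants defined in terms of the dyadic model's own localizations $(\mathcal{I}_\alpha^{\cd^t})_Q$.

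For the lower bounds (necessity), the key point is that $\mathcal{I}_\alpha$ is itself a positive bilinear operator, so the general necessity of sequential testing proved in Subsection~\ref{subsec:nec-sequential} yields $\mathfrak{T}^{\cd^t}_{\mathcal{I}_\alpha,(i,j,k)}\lesssim \|\mathcal{I}_\alpha(\,\cdot\,\sigma_1,\,\cdot\,\sigma_2)\|$ for each fixed $t$, since sparse collections inside $\cd^t$ are a special case of sparse collections. The abstract Wolff potential constants $\mathfrak{W}^{\cd^t}_{\mathcal{I}_\alpha,(i,j,k)}$ are then controlled by the same operator norm via the equivalences $\mathfrak{T}\eqsim\mathfrak{W}$ proved in Theorems~\ref{thm:ge1}–\ref{thm:ge2} (applied to the positive bilinear operator $\mathcal{I}_\alpha$, not to its dyadic model). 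Summing over the finitely many $t\in\{0,1/3\}^n$ finishes this direction.

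For the upper bound (sufficiency), the pointwise equivalence \eqref{eq:pointwise} gives $\|\mathcal{I}_\alpha\|\lesssim \sum_t \|\mathcal{I}_\alpha^{\cd^t}\|$, and Theorems~\ref{thm:bilinear}, \ref{thm:ge1}, \ref{thm:ge2} bound each $\|\mathcal{I}_\alpha^{\cd^t}\|$ by its sequential testing or Wolff potential constants, formulated with the model's own localization $(\mathcal{I}_\alpha^{\cd^t})_Q$. The remaining task is to replace $(\mathcal{I}_\alpha^{\cd^t})_Q$ by $\mathcal{I}_{\alpha,Q}$. This is the pointwise comparison, valid for any $Q\in\cd^t$ and non-negative $f_j,f_k$:
\begin{equation*}
  (\mathcal{I}_\alpha^{\cd^t})_Q(f_j\sigma_j,f_k\sigma_k)
  \;\le\; \mathcal{I}_\alpha^{\cd^t}(1_Q f_j\sigma_j,1_Q f_k\sigma_k)
  \;\lesssim\; \mathcal{I}_\alpha(1_Q f_j\sigma_j,1_Q f_k\sigma_k)
  \;=\; \mathcal{I}_{\alpha,Q}(f_j\sigma_j,f_k\sigma_k),
\end{equation*}
where the first step adds non-negative terms indexed by $Q'\in\cd^t$ with $Q'\supsetneq Q$, and the second step is the trivial consequence of \eqref{eq:pointwise} that $\mathcal{I}_\alpha^{\cd^t}\le C\sum_{t'}\mathcal{I}_\alpha^{\cd^{t'}}\lesssim \mathcal{I}_\alpha$. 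Since all the norms in the sequential testing and Wolff potential constants are monotone in the pointwise size of the localized operator, this comparison promotes the dyadic-model constants to the constants $\mathfrak{T}^{\cd^t}_{\mathcal{I}_\alpha,(i,j,k)}$ and $\mathfrak{W}^{\cd^t}_{\mathcal{I}_\alpha,(i,j,k)}$ defined with $\mathcal{I}_{\alpha,Q}$.

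The main obstacle is purely bookkeeping: one must track the pointwise inequality $(\mathcal{I}_\alpha^{\cd^t})_Q\le C\,\mathcal{I}_{\alpha,Q}$ through the nested $\ell^{r_k}(\sigma_j)$–$\ell^r(\sigma_k)$ norms that define $\mathfrak{T}_{i,j,k}$ and through the iterated suprema and Lebesgue norms that define $\mathfrak{W}_{T,(i,j,k)}$. Since each of these constructions is monotone in the input, no new analytic ingredient is required beyond \eqref{eq:pointwise} and the dyadic results of the previous sections; the proof is essentially an application of those results combined with the monotonicity argument above.
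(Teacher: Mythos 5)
Your argument is correct and follows essentially the same route as the paper: reduce to the dyadic models $\mathcal{I}_\alpha^{\cd^t}$ via \eqref{eq:pointwise}, apply Theorems~\ref{thm:bilinear}, \ref{thm:ge1}, \ref{thm:ge2}, promote the dyadic localizations to $\mathcal{I}_{\alpha,Q}$ by the pointwise domination $(\mathcal{I}_\alpha^{\cd^t})_Q\le\mathcal{I}_\alpha^{\cd^t}(1_Q\cdot,1_Q\cdot)\lesssim\mathcal{I}_{\alpha,Q}$ (a step the paper leaves implicit), and use the general positive-bilinear-operator results applied to $\mathcal{I}_\alpha$ itself for the reverse bounds. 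The only cosmetic difference is that you verify both directions for both $\mathfrak{T}$ and $\mathfrak{W}$ separately, while the paper closes a single cyclic chain $\|\mathcal{I}_\alpha\|\lesssim\sum\mathfrak{T}\lesssim\sum\mathfrak{W}\lesssim\|\mathcal{I}_\alpha\|$ (noting that in the case $\frac1{p_1}+\frac1{p_2}+\frac1{p_3}<1$ the link to $\mathfrak{W}$ goes through $\tilde{\mathfrak{T}}$ rather than $\mathfrak{T}$, which your argument accommodates since $\mathfrak{T}\le\tilde{\mathfrak{T}}$).
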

\begin{proof}
From the pointwise equivalence \eqref{eq:pointwise}, Theorem \ref{thm:bilinear} applied to the dyadic operators $\mathcal{I}^{\cd^t}_\alpha$, and the pointwise equivalence \eqref{eq:pointwise} again, it follows that
\begin{equation*}
\begin{split}
\| \mathcal{I}_\alpha \| &\eqsim \sum_{t\in\{0,1/3\}^n}\| \mathcal{I}^{\cd^t}_\alpha \| \eqsim \sum_{t\in\{0,1/3\}^n} \sum_{(i,j,k)\in S_3}
  \mathfrak{T}^{\cd^t}_{\mathcal I^{\cd^t}_\alpha,(i,j,k)}\\
  &\lesssim \sum_{t\in\{0,1/3\}^n} \sum_{(i,j,k)\in S_3}
  \mathfrak{T}^{\cd^t}_{\mathcal I_\alpha,(i,j,k)}.
  \end{split}
\end{equation*}
where $\mathfrak{T}^{\cd^t}_{\mathcal I_\alpha,(i,j,k)}$ denotes the sequential testing constant of the operator $\mathcal I_\alpha$ with respect the dyadic system $\cd^t$. By Theorem \ref{thm:ge1} and Theorem \ref{thm:ge2} together with the fact that $\mathfrak{T}^{\cd^t}_{\mathcal I_\alpha,(i,j,k)}\leq \tilde{\mathfrak{T}}^{\cd^t}_{\mathcal I_\alpha,(i,j,k)}$, we have that $\mathfrak{T}^{\cd^t}_{\mathcal I_\alpha,(i,j,k)}\lesssim \mathfrak{W}^{\cd^t}_{\mathcal I_\alpha,(i,j,k)}\lesssim \| \mathcal{I}_\alpha \|$, which completes the proof.
\end{proof}


\bibliography{weighted}
\bibliographystyle{plain}
\end{document}